\documentclass[11pt,letterpaper]{article}%
\usepackage{amsmath}
\usepackage{amsfonts}
\usepackage{amssymb}
\usepackage{graphicx}
\usepackage{pstricks}
\usepackage{pst-node}
\usepackage{pst-plot}
\usepackage[letterpaper]{geometry}
\geometry{verbose,tmargin=2.5cm,bmargin=2.5cm,lmargin=2.5cm,rmargin=2.5cm}
\setlength{\emergencystretch }{2em}
\newtheorem{theorem}{Theorem}
\newtheorem{proposition}[theorem]{Proposition}
\newtheorem{corollary}[theorem]{Corollary}
\newtheorem{lemma}[theorem]{Lemma}

\newenvironment{proof}[1][Proof]{\noindent \textbf {#1.} }{\ \rule {0.5em}{0.5em}}
\newtheorem{preremark}[theorem]{Remark}
\newenvironment{remark}{\begin {preremark}\rm }{\hfill $\Diamond $\end {preremark}}
\newtheorem{prenotation}[theorem]{Notation}

\newenvironment{t_enumerate}{ \begin {enumerate} \setlength {\itemsep }{1pt} \setlength {\parskip }{0pt} \setlength {\parsep }{0pt}}{\end {enumerate} }

\numberwithin{equation}{section}
\numberwithin{theorem}{section}

\begin{document}

\title{{Coherent state transforms and the Mackey-Stone-Von Neumann theorem}}
\author{William D. Kirwin\thanks{Mathematics Institute, University of Cologne, Weyertal 86 - 90, 50931 Cologne, Germany.\newline email: will.kirwin@gmail.com}, Jos\'e M. Mour\~ao and Jo\~ao P. Nunes\thanks{Center for Mathematical Analysis, Geometry and Dynamical Systems and Department of Mathematics, Instituto Superior T\'ecnico, Av. Rovisco Pais, 1049-001 Lisbon, Portugal.\newline email: jmourao@math.ist.utl.pt, jpnunes@math.ist.utl.pt}}
\date{}
\maketitle

\begin{abstract}
Mackey showed that for a compact Lie group $K$, the pair $(K,C^{0}(K))$ has a unique non-trivial irreducible covariant pair of representations. We study the relevance of this result to the unitary equivalence of quantizations for an infinite-dimensional family of $K\times K$ invariant polarizations on $T^{\ast}K$. The K\"{a}hler polarizations in the family are generated by (complex) time-$\tau$ Hamiltonian flows applied to the (Schr\"{o}dinger) vertical real polarization. The unitary equivalence of the corresponding quantizations of $T^{\ast}K$ is then studied by considering covariant pairs of representations of $K$ defined by geometric prequantization and of representations of $C^0(K)$ defined via Heisenberg time-$(-\tau)$ evolution followed by time-$(+\tau)$ geometric-quantization-induced evolution. We show that in the semiclassical and large imaginary time limits, the unitary transform whose existence is guaranteed by Mackey's theorem can be approximated by composition of the time-$(+\tau)$ geometric-quantization-induced evolution with the time-$(-\tau)$ evolution associated with the momentum space \cite{Kirwin-Wu12} quantization of the Hamiltonian function generating the flow. In the case of quadratic Hamiltonians, this asymptotic result is exact and unitary equivalence between quantizations is achieved by identifying the Heisenberg imaginary time evolution with heat operator evolution, in accordance with the coherent state transform of Hall.
\end{abstract}
\tableofcontents

\section{Introduction}
\label{intro}

Geometric quantization has proven to be a very rich approach to the general mathematical problem of the quantization of a symplectic manifold $(M,\omega)$. In order to half-form quantize $(M,\omega),$ one needs to choose a polarization ${{\mathcal{P}}}$, that is an involutive Lagrangian distribution in the complexified tangent bundle $TM\otimes{{\mathbb{C}}}$. The half-form quantization $\mathcal{H}_{\mathcal{P}}$ of $(M,\omega,\mathcal{P})$ is then the $L^{2}$-closure of the space of square-integrable smooth sections of the \emph{quantum bundle} $L\otimes\sqrt{\mathcal{K}}$ which are covariantly constant along $\overline{\mathcal{P}}$, where $\sqrt{\mathcal{K}}$ is a choice of square root of the canonical bundle $\bigwedge^{n}\mathcal{P}^{\ast}$ of $\mathcal{P}$ and $L$ is a Hermitian line bundle with compatible connection with curvature $-i\omega$. (Of course, in general there are topological obstructions to the existence of $\sqrt{\mathcal{K}}$ and $L$, but they will play no role in this paper.) A major, perhaps even the fundamental, issue in geometric quantization is the dependence of quantization on the choice of ${{\mathcal{P}}}$. In favorable cases, given two polarizations ${{\mathcal{P}}},{{\mathcal{P}}}^{\prime}$ of $(M,\omega)$, one would like to have a natural unitary isomorphism between the corresponding Hilbert spaces of quantum states ${{\mathcal{H}}}_{{\mathcal{P}}},{{\mathcal{H}}}_{{{\mathcal{P}}}^{\prime}}$, at least up to a projective ambiguity. Moreover, this unitary isomorphism should intertwine actions of sufficiently big algebras of observables on ${{\mathcal{H}}}_{{\mathcal{P}}}$ and ${{\mathcal{H}}}_{{{\mathcal{P}}}^{\prime}}$.

In the paradigmatic case when $(M,\omega)$ is a symplectic vector space and when one considers translation invariant polarizations, such an isomorphism is guaranteed to exist by the Stone--Von Neumann theorem, which gives uniqueness of the irreducible unitary representation of the Heisenberg group \cite{Axelrod-DellaPietra-Witten,Woodhouse,Kirwin-Wu06}. The linear observables, which generate translations and which, together with the constants, span the Heisenberg algebra, preserve the invariant polarizations and therefore have geometric-quantization induced actions on the Hilbert spaces $\mathcal{H}_{\mathcal{P}}$ of $\mathcal{P}$-polarized quantum states, which integrate to irreducible representations of the Heisenberg group. According to the Stone--Von Neumann uniqueness theorem, all such representations are unitarily equivalent and then, from Schur's lemma, there is a unique-up-to-phase unitary operator
\[
U_{\mathcal{P},\mathcal{P}^{\prime}}\ :\ \mathcal{H}_{\mathcal{P}}\longrightarrow\mathcal{H}_{\mathcal{P}^{\prime}},
\]
establishing equivalence of the quantizations for different polarizations within this class.

When $(M,\omega)$ is a symplectic torus $M=T^{2n}$, the corresponding Stone-Von Neumann type theorem for the finite Heisenberg group also guarantees the equivalence between quantizations for translation invariant polarizations \cite{Axelrod-DellaPietra-Witten,Polishchuk,BMN}.

In this paper, we will address this question in the case when $M=T^{\ast}K\overset{\pi}{\longrightarrow}K$ is the cotangent bundle of a compact Lie group $K$ equipped with the canonical symplectic form $\omega$. One motivation for this is to address a question raised by Hall in \cite{hall00,Hall01} on finding an analogue, for the quantization of $T^{\ast}K$, of the role played by Stone--Von Neumann theorem in establishing the unitary equivalence of quantizations for translation-invariant polarizations on a symplectic vector space. In the vector space case, the unitary equivalence between the real polarized and K\"{a}hler polarized quantum Hilbert spaces is realized by the Segal--Bargmann transform \cite{Kirwin-Wu06}. Indeed, Bargmann originally found the explicit form of the Segal--Bargmann transform by using the Stone--Von Neumann-guaranteed equivariance \cite{Bargmann}.\ In \cite{Hall94}, Hall describes a generalized Segal--Bargmann transform, which he calls the Coherent State Transform (CST), which is a unitary $K$-equivariant isomorphism between square-integrable functions on $K$ with respect to the Haar measure $dx$ and a certain weighted space of holomorphic functions on the complexified group $K_{\mathbb{C}}.$ In \cite{Hall02}, Hall showed that in fact $L^{2}(K,dx)$ and the weighted space of holomorphic functions which appear in his CST are the, respectively, real-polarized and K\"{a}hler quantizations of $T^{\ast}K$. As mentioned in \cite{hall00,Hall01}, it would be desirable to have a a better understanding of the existence of the CST in terms of a Stone--Von Neumann type result. We address this question in the present paper.

\bigskip

More specifically, we will study the role of Mackey's generalization of the Stone-Von Neumann theorem \cite{Mackey} in establishing the unitary equivalence of quantization for an infinite-dimensional family of $K\times K$-invariant K\"{a}hler polarizations on $T^{\ast}K$.

We consider the infinite-dimensional family ${{\mathcal{T}}}$ of $K\times K$-invariant K\"{a}hler polarizations of the cotangent bundle $T^{\ast}K$ of a compact Lie group $K$ which were studied in \cite{kirwin-mourao-nunes12}. The boundary of this family includes real polarizations and, in particular, the vertical polarization, which we denote by $\mathcal{P}_{Sch}$ as the corresponding quantum Hilbert space is the usual Schr\"{o}dinger quantization of $T^{\ast}K$. We show that even though the case of $T^{\ast}K$ is not quite the same as the case of a symplectic vector space, they share many similarities.

Let $\mathcal{H}_{Sch}=L^{2}(K,dx)\otimes\sqrt{dx}$ be the Hilbert space of half-form corrected polarized quantum states for $\mathcal{P}_{Sch}$, where $dx$ is the normalized Haar measure on $K$. Since $\mathcal{P}_{Sch}$ is preserved by the Hamiltonian vector fields corresponding to smooth functions on $K$ (pulled-back to $T^{\ast}K$) as well as by the generators of the $K\times K$ Hamiltonian action, geometric quantization, in particular the Kostant--Souriau formula with half-forms, defines on $\mathcal{H}_{Sch}$ a pair of representations $\rho^{Sch}:K\times K\rightarrow\mathcal{O}(\mathcal{H}_{Sch})$ and $\gamma^{Sch}:C^{0}(K)\rightarrow\mathcal{O}(\mathcal{H}_{Sch})$. The pair $(\rho_{|_{K\times\{e\}}}^{Sch},\gamma^{Sch})$ is naturally isomorphic to the standard covariant pair of representations of $K$ and $C^{0}(K)$ on $L^{2}(K,dx)$ \cite{Mackey,Rosenberg04}. (See Section \ref{s31}.) Indeed, Mackey's uniqueness theorem \cite{Mackey,Rosenberg04} states that there is only one such irreducible covariant pair up to equivalence.

Let $\operatorname{Conv}(K)$ be the set of smooth strictly convex $K\times K$-invariant functions on $T^{\ast}K$, such that the operator norm of the Hessian of any $h\in \operatorname{Conv}(K)$ has a positive lower bound. The family of polarizations we consider was studied in \cite{kirwin-mourao-nunes12} and consists of polarizations ${{\mathcal{P}}}_{\tau,h}\in{{\mathcal{T}}}$ \ labeled by a pair $(\tau,h),$ where
\[\tau\in{{\mathbb{C}}}^{+}=\{\tau\in{{\mathbb{C}}}:\tau=\tau_{1}+i\tau_{2},\tau_{2}>0\},
\]
and $h\in \operatorname{Conv}(K)$. The polarizations $\mathcal{P}_{\tau,h}$ are $K\times K$-invariant and are K\"ahler with respect to $J_{\tau,h}$, where $J_{\tau,h}$ is the corresponding complex structure on $T^*K$.

The family ${{\mathcal{T}}}$ can be partially compactified to
\begin{equation}\label{compact}
\overline
{{\mathcal{T}}}={{\mathcal{T}}}\cup\{{{\mathcal{P}}}_{t,h},t\in{{\mathbb{R}},h\in \operatorname{Conv}(K)}\},
\end{equation}
by including an infinite-dimensional family of real polarizations obtained from the vertical polarization by pushing forward by the real-time Hamiltonian flow of $h\in \operatorname{Conv}(K)$.\footnote{Note that the conditions we impose on the Hamiltonian functions $h$ are only relevant when considering complex-time Hamiltonian flow, that is for $\tau_{2}>0$.} For any Hamiltonian function $h$ satisfying the conditions above, $\mathcal{P}_{0,h}=\mathcal{P}_{Sch}$ is the vertical polarization, and this is the only polarization in the family which is also invariant under $C^{\infty}(K)$. The other real polarizations in the family are given by the points $\mathcal{P}_{t,h},t\in{{\mathbb{R}}}$, and are invariant under algebras of functions on $T^{\ast}K$ which are $\ast$-isomorphic to $C^{\infty}(K)$.

Even though $X_{h}$ does not preserve $\mathcal{P}_{Sch}$, Hamiltonian vector fields associated to $K$-invariant functions which are linear in the fiber variables$\ $\emph{do} preserve $\mathcal{P}_{Sch}$ and therefore act on ${{\mathcal{H}}}_{Sch}$; in particular, they generate the right $K$ action
\begin{equation}
\hat{y}^{j}\cdot\left(  \psi(x)\otimes\sqrt{dx}\right)  =i(X_{j}\cdot\psi(x))\otimes\sqrt{dx},\label{schaction}
\end{equation}
where $\{y^{j}\}_{j=1,\dots,n}$ are coordinates for an orthonormal basis of left-invariant vector fields on $K$, with $\{{\hat{y}}^{j}\}_{j=1,\dots,n}$ the corresponding Kostant--Souriau prequantum operators. Similarly, taking the Kostant--Souriau operators associated to an orthonormal basis of right-invariant vector fields on $K$, $\{{\hat{\tilde y}}^{j}\}_{j=1,\dots,n}$, one obtains the left $K$ representation on ${{\mathcal{H}}}_{Sch}$ \cite{kirwin-mourao-nunes12}. The decomposition of $\mathcal{H}_{Sch}$ into irreducible representations with respect to this $K\times K$ action is the usual Peter--Weyl decomposition
\[\mathcal{H}_{Sch}\simeq L^{2}(K,dx)\simeq\bigoplus_{\lambda\in\hat{K}}V_{\lambda\otimes\lambda^{\ast}},\]
where $\hat{K}$ is the set of equivalence classes of irreducible unitary representations of $K$.

The $K\times K$-invariance of $\mathcal{P}_{\tau,h}$ implies that for each $(\tau,h)$, the Hilbert space $\mathcal{H}_{\tau,h}$ of quantum states for $\mathcal{P}_{\tau,h}$ carries a $K\times K$ representation, and it was shown in \cite{kirwin-mourao-nunes12} that $\mathcal{H}_{\tau,h}$ also decomposes into unitary irreducible representations of $K\times K$ as
\[
\mathcal{H}_{\tau,h}\simeq \bigoplus_{\lambda\in\hat{K}}V_{\lambda\otimes \lambda^{\ast}}.
\]
Hence, there exist unitary maps intertwining the $K\times K$ actions on $\mathcal{H}_{Sch}\simeq L^{2}(K,dx)\simeq\bigoplus_{\lambda\in \hat{K}}V_{\lambda\otimes\lambda^{\ast}}$ and $\mathcal{H}_{\tau,h}$, and these unitary maps are determined up to a choice of phase for each $\lambda\in\hat{K}$. If geometric quantization also defined a $*$-representation of $C^0(K)$ on $\mathcal{H}_{\tau,h}$ thus completing the previous $K\times \{e\}$ representation to an irreducible covariant pair, then, via Mackey's theorem, one would be in the same situation as in the quantization of a symplectic vector space with invariant polarizations. However, this is not the case since, as mentioned above, observables in $C^\infty(K)$ do not preserve $\mathcal{H}_{\tau,h}$. Nevertheless, albeit indirectly, geometric quantization still allows us to define a representation of $C^0(K)$ on $\mathcal{H}_{\tau,h}$. As we will show, in the case when $h$ is quadratic the projective unitarity of the CST of Hall is equivalent to the $*$-property of this representation, so that  the CST can be understood in the context of Mackey's theorem.

The construction of representations of $C^0(K)$ on ${\mathcal H}_{\tau,h}$ can be naturally divided into three parts.

\smallskip
\noindent\textbf{1. Representation of ${\mathcal A}_{\tau,h}$ on $\mathcal{H}_{\tau,h}$.} (See Theorem \ref{thlifts}.) The Kostant--Souriau prequantization $\hat{h}$ of the Hamiltonian function $h$ gives a densely defined operator \cite{kirwin-mourao-nunes12},
\begin{equation}
e^{-i\tau\hat{h}}:\mathcal{H}_{Sch}\rightarrow\mathcal{H}_{\tau,h},\label{hhat}
\end{equation}
that intertwines the actions of $K\times K$. Let ${\mathcal A}_0$ be the algebra of functions on $K$ generated by matrix elements of irreducible representations of $K$ pulled back to $T^*K$ by the canonical projection. The action of ${\mathcal A}_0$ on $\mathcal{H}_{Sch}$ is intertwined by $e^{-i\tau \hat{h}}$ with the action of an algebra of $J_{\tau,h}$-holomorphic functions on $T^{\ast}K$,
\begin{equation}
{\mathcal A}_{\tau,h}=e^{\tau X_{h}}{\mathcal A}_0=\{e^{\tau X_{h}}\cdot f:f\in{\mathcal A}_0\},\label{cxevol}
\end{equation}
where $X_{h}$ is the Hamiltonian vector field for $h$. Here, $e^{\tau X_{h}}\cdot f$ denotes the analytic continuation of $f_{|_K}$ from $K$ to $(T^*K,J_{\tau,h})$. Note that this expression can be literally interpreted as a power series in $\tau$ \cite{Hall-Kirwin, kirwin-mourao-nunes12}.

\smallskip
\noindent\textbf{2. Representation of ${\mathcal A}_{-\tau,h}$ on $\mathcal{H}_{Sch}$.} (See Theorem \ref{39} and equation (\ref{gaztil}).) As we have just seen, the operator $e^{-i\tau\hat{h}}$ evolves states from $\mathcal{H}_{Sch}$ to $\mathcal{H}_{\tau,h}$ but also evolves observables as in (\ref{cxevol}). It is then natural to expect that in order to have $e^{-i\tau\hat{h}}$ defining a representation of ${\mathcal A}_0$ on  $\mathcal{H}_{\tau, h}$ we should start from a representation of ${\mathcal A}_{-\tau,h} \ = \ e^{-\tau X_{h}}{\mathcal A}_0$ on $\mathcal{H}_{Sch}$. We will achieve this by choosing a representation $Q(h)$ of $h$ on $\mathcal{H}_{Sch}$ and then by taking Heisenberg evolution in complex time $-\tau$.
\begin{equation}
 {\mathcal A}_{-\tau,h} \ni e^{-\tau X_{h}}\cdot f\mapsto e^{i\tau Q(h)}\circ f\circ e^{-i\tau Q(h)},\label{heisenberg}
\end{equation}
for $f\in {\mathcal A}_0$, acting as operators on ${{\mathcal{H}}}_{Sch}=L^{2}(K,dx)$.

Let us now motivate our choice of $Q(h)$. We will identify $Lie(K)\cong Lie(K)^*$ via the invariant bilinear form on $Lie(K)$ corresponding to the normalized Haar measure on $K$. In \cite{Kirwin-Wu12}, the first author and Wu have shown that there is a momentum space polarization for $T^*K$ which corresponds to $\lim_{\tau_2\to \infty} {\mathcal{P}}_{{i\tau_2},\frac{|Y|^2}{2}}.$ Moreover, they have shown that the half-form quantization corresponding to this polarization is given by Bohr--Sommerfeld fibers which are localized along submanifolds $K\times {\mathcal O}_{-(\lambda+\rho)}=K\times {\mathcal O}_{(\lambda+\rho)^*}\subset T^*K$, where ${\mathcal O}_{-(\lambda+\rho)}$ is a coadjoint orbit through $Y=-(\lambda+\rho)$, with $\lambda \in \hat K$ a highest weight and $\rho$ the Weyl vector given by the half-sum of the positive roots of $Lie(K)\otimes {\mathbb C}$. Consider the two natural projections $K\times {\mathcal O}_{\lambda}\to {\mathcal O}_{\lambda}\subset Lie(K)$, given by the restriction to $K\times {\mathcal O}_{\lambda}$ of moment maps $\mu,\tilde\mu$  for the right and left Hamiltonian of $K$ actions on $T^*K$, with $\mu(x,Y)=Y,\tilde \mu(x,Y)= \tilde Y=Ad_x(Y)$. Let $\omega$ be the standard symplectic form on $T^*K$. In \cite{Kirwin-Wu12}, it is shown that, for $Y\in Lie(K)$,
\[
\iota_Y^* \omega = -\iota_Y^*\mu^* \omega_{{\mathcal O}_Y} + \iota_Y^*\tilde \mu^* \omega_{{\mathcal O}_Y},
\]
where $\iota_Y: K\times {\mathcal O}_{Y}\to T^*K$ is the inclusion and $\omega_{{\mathcal O}_Y}$ is the standard Kirillov symplectic form on ${\mathcal O}_{Y}$.

The moment maps for the left and right actions of $K$ on $T^*K$ induce a fibration $(\mu,\tilde{\mu}):K\times \mathcal{O}_{-(\lambda+\rho)}\rightarrow \mathcal{O}^-_{\lambda+\rho} \times \mathcal{O}_{-(\lambda+\rho)}$ (the fibers are Cartan tori), where ${\mathcal O}_{(\lambda+\rho)}^-$ denotes ${\mathcal O}_{(\lambda+\rho)}$ equipped with the
negative of the usual Kirillov symplectic form \cite{Kirwin-Wu12}. The natural contribution to the quantization of $T^*K$ in the momentum polarization of a Bohr--Sommerfeld fiber $K\times {\mathcal O}_{-(\lambda+\rho)}$ is determined by half-form corrected Borel--Weil--Bott theory. Recall that in Borel--Weil--Bott theory the bundle of half-forms corresponds to the weight $-\rho$, so that in half-form quantization the representation $V_\lambda$ is associated to the co-adjoint orbit ${\mathcal O}_{(\lambda+\rho)}$. Moreover, note that in the present case over
${\mathcal O}_{(\lambda+\rho)}^-$ one has the negative of the usual Kirillov symplectic form \cite{Kirwin-Wu12} and that therefore one gets $V_{\lambda\otimes \lambda^*}$ associated to $K\times {\mathcal O}_{-(\lambda+\rho)}$. The contribution to the quantization of the Bohr-Sommerfeld fiber $K\times {\mathcal O}_{-(\lambda+\rho)}$ is therefore given by $V^{mom}_\lambda\simeq V_{\lambda\otimes \lambda^*}$.

One then has, in agreement with the above, a decomposition of the Hilbert space of quantum states for the momentum polarization, ${\mathcal{H}}^{mom}$, as
\[
{{\mathcal{H}}}^{mom} \simeq \oplus_{\lambda\in \hat K} V^{mom}_\lambda.
\]
Since the time evolution operator $e^{-i\tau \hat h}$ intertwines the $K\times K$ actions on each $\mathcal{H}_{\tau,h}$, we see that the quantum states in $V^{mom}_\lambda$ appear as the $\tau_2\to\infty$ time evolution of the quantum states in $V_{\lambda\otimes \lambda^*}\subset {{\mathcal{H}}}_{Sch}$.

The natural quantization in the momentum space polarization of an $Ad-$invariant function  $f(Y)$  is therefore given simply in terms of multiplication operators. That is, the quantum operator $\hat f$ acts on the quantum state localized at the Bohr--Sommerfeld fiber $K\times {\mathcal O}_{-(\lambda+\rho)}$ by multiplication by  $f(-(\lambda+\rho))$. The momentum space quantization $Q^{mom}(h)$ for the function $h$ acting on ${{\mathcal{H}}}^{mom}$, is then
\[
Q^{mom}(h)_{|_{V^{mom}_ \lambda}} = h(-(\lambda+\rho))\cdot Id_{V^{mom}_\lambda}.
\]
This motivates the definition of a quantization $Q(h)$ of the function $h$ acting on ${\mathcal H}_{Sch}$, by letting $e^{-i\tau \hat h}$ intertwine the actions of $Q^{mom}(h)$ and of $Q(h)$, as  $\tau_2\to\infty$. That is, we define $Q(h): {\mathcal H}_{Sch}\to {\mathcal H}_{Sch}$ by
\begin{equation}
Q(h)\cdot R^\lambda_{ij}(x)\otimes \sqrt{dx} = h(-(\lambda+\rho)) R^\lambda_{ij}(x)\otimes \sqrt{dx},\label{schquant}
\end{equation}
where $R^\lambda_{ij}$ is a matrix element for $\lambda\in \hat K.$

Recall that the Schr\"odinger--Duflo quantization \cite{Duflo} is an extension of the Schr\"odinger quantization to $Ad-$invariant functions of $Y$ with the property that it gives an associative algebra isomorphism between the space of $Ad-$invariant functions on $Lie(K)$ and the corresponding space of quantized operators on $L^2(K,dx)$. (It has recently been applied in the context of loop quantum gravity \cite{Sahlmann-Thiemann}. For a recent review see \cite{CR}.) While the momentum space quantization operators we defined above clearly share this property with the Duflo operators, an exact comparison between the spectrum of these two operators, for general $h$, does not seem to exist in the literature.

In the quadratic case, $h(Y)=\frac{1}{2}|Y|^{2}$, one has $Q^{SD}(h)=-\frac{1}{2}\Delta + \frac{|\rho|^2}{2},$ where $\Delta$ is the Laplace operator on $K$ for the bi-invariant metric  \cite{Mein}. Therefore, since the eigenvalues of $-\Delta$ are the quadratic Casimirs $C_2(\lambda)=(\lambda+\rho)^2 -\rho^2$, we have
\[
Q^{SD}(h)\cdot R^\lambda_{ij}(x)\otimes \sqrt{dx} = Q(h)\cdot R^\lambda_{ij}(x)\otimes \sqrt{dx},
\]
so that $Q^{SD}(h)$ and $Q(h)$ are the same operator on ${{\mathcal{H}}}_{Sch}$, in this case.

In this paper, we will consider the quantization defined by the operators $Q(h)$ in (\ref{schquant}), inducing (\ref{heisenberg}), since this is the choice leading to asymptotic unitarity in the limit $\tau_2\to +\infty$ in Section \ref{sasex}.

\smallskip
\noindent\textbf{3. Representation of ${\mathcal A}_0$ on $\mathcal{H}_{\tau,h}$.} (See Theorem \ref{316}.) The time-$(-\tau)$ Heisenberg evolution in (\ref{heisenberg}) can then be composed with the time-$(+\tau)$ Kostant--Souriau evolution (\ref{hhat})
\begin{equation}
f\mapsto e^{-i\tau\hat{h}}\circ e^{i\tau Q(h)}\circ f\circ e^{-i\tau Q(h)}\circ e^{i\tau\hat{h}},\label{bigevol}
\end{equation}
in the hope of obtaining a $^{\ast}$-representation $\gamma_{\tau,h}$ of $C^{0}(K)$ on ${{\mathcal{H}}}_{\tau,h}$ which forms a covariant pair together with $\rho_{{\tau,h}_{|_{K\times \{e\}}}}$. This turns out to work in the quadratic case $h(Y)=\frac{1}{2}|Y|^{2},$ thus yielding a Stone--Von Neumann-type interpretation of the CST of Hall. That is, the CST of Hall intertwines two irreducible covariant pairs of representations of $(C^0(K),K)$ defined naturally by geometric quantization, and therefore its unitarity is a consequence of the Stone--Von Neumann--Mackey theorem.

\section{Infinite-dimensional family of K\"ahler structures on $T^{*}K$ and Thiemann rays}
\label{s21}

In this section, we first review some basic facts concerning the geometric quantization of the cotangent bundle $T^{\ast}K$ of a compact Lie group $K$ with standard symplectic form. Then, we recall the infinite-dimensional family of K\"{a}hler polarizations considered in \cite{kirwin-mourao-nunes12} and define the Thiemann rays and their lift to the quantum bundle.

Let $\operatorname{dim}K=n$ and let $B$ be an $Ad$-invariant inner product on ${\mathfrak{k}}=\mathrm{Lie}(K)$ which induces the bi-invariant metric $\gamma$ on $K$ such that the associated Haar measure $dx$ has unit volume. We will henceforth identify $\mathfrak k\cong{\mathfrak k}^*$ via $B$. Let $\{X_{j}\}_{j=1,\dots,n}$ be an oriented orthonormal basis of left-invariant vector fields on $K$ and let $\{y^{j}\}_{j=1,\dots,n}$ be the corresponding coordinates on $\mathfrak{k}^{\ast}\cong\mathfrak{k}$. Let $\{w^{j}\}_{j=1,\dots,n}$ be the basis of left-invariant $1$-forms on $K$ dual to the vector fields $X_{j}$ such that $dx=w^{1}\wedge\cdots\wedge w^{n}$. We will denote their pullbacks to $T^{\ast}K$ along the canonical projection by $w^{j}$ as well. Similarly, we will need the coordinates $\{\tilde{y}^{i}\}_{i=1,\dots,n}$ on ${\mathfrak{k}}^{\ast}$ corresponding to a basis of right-invariant vector fields. Notice that $\{y^{j}\}_{j=1,\dots,n}$ and $\{\tilde{y}^{i}\}_{i=1,\dots,n}$ are the components of the moment maps $\mu,\tilde\mu$ for the right and left Hamiltonian actions of $K$ on $T^*K$, respectively. Consider the canonical symplectic structure on $T^{\ast}K$ given by
\[
\omega=-d\theta,
\]
where $\theta=\sum_{i=1}^{n}y^{i}w^{i}$ is the canonical $1$-form.

Let $K_{{\mathbb{C}}}$ be the complexification of $K$. Let ${\mathcal{C}}$ denote analytic continuation of functions from $K$ to $K_{{\mathbb{C}}}$. Recall the \textit{coherent state transform} (CST) of Hall
\begin{align*}
C_{t}:L^{2}(K,dx)  &  \rightarrow{{\mathcal{H}}}L^{2}(K_{{\mathbb{C}}},d\nu_{t})\\
f  &  \mapsto C_{t}(f)={\mathcal{C}}\circ e^{\frac{t}{2}\Delta}f,
\end{align*}
where $\Delta$ is the Laplacian for the metric $\gamma$, $t>0$ and ${{\mathcal{H}}}L^{2}(K_{{\mathbb{C}}},d\nu_{t})$ denotes the space of holomorphic functions on $K_{{\mathbb{C}}}$ which are square integrable with respect to the so-called averaged heat kernel measure $d\nu_{t}$ (see Theorem \ref{tautwopositive} below for a precise formula for $d\nu_{t}$). Hall proves:

\begin{theorem}
\cite{Hall94} For all $t>0$, $C_{t}$ is a unitary isomorphism of Hilbert spaces.
\end{theorem}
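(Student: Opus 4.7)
My plan is to establish unitarity by diagonalizing $C_t$ on a convenient orthogonal basis provided by the Peter--Weyl theorem, and then separately verifying density of the image.

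First, I would check that $C_t$ is well-defined. Since $K$ is compact and $\Delta$ is elliptic, the heat operator $e^{(t/2)\Delta}$ maps $L^2(K,dx)$ into real-analytic functions on $K$, whose restrictions to matrix coefficients match those of the holomorphic extension of the representation. Hence $\mathcal{C}\circ e^{(t/2)\Delta}f$ is well-defined as a holomorphic function on $K_{\mathbb{C}}$; its membership in $\mathcal{H}L^2(K_{\mathbb{C}},d\nu_t)$ will follow from the norm computation in the next step.

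Next, I would work in the Peter--Weyl basis $\{R^\lambda_{ij}\}_{\lambda\in \hat K,\, i,j=1,\dots,\dim V_\lambda}$, which is an orthogonal basis of $L^2(K,dx)$ with $\|R^\lambda_{ij}\|^2_{L^2(K,dx)} = 1/\dim V_\lambda$ by Schur orthogonality. Since $-\Delta R^\lambda_{ij} = C_2(\lambda) R^\lambda_{ij}$, one gets
\[
C_t(R^\lambda_{ij}) = e^{-tC_2(\lambda)/2}\, \mathcal{C}(R^\lambda_{ij}),
\]
where $\mathcal{C}(R^\lambda_{ij})$ is the matrix coefficient of the holomorphic extension of $\lambda$ to $K_{\mathbb{C}}$. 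The central calculation is then
\[
\bigl\langle C_t(R^\lambda_{ij}),\, C_t(R^{\lambda'}_{i'j'})\bigr\rangle_{\nu_t}
= e^{-t(C_2(\lambda)+C_2(\lambda'))/2}\!\int_{K_{\mathbb{C}}} \overline{\mathcal{C}(R^\lambda_{ij})}\,\mathcal{C}(R^{\lambda'}_{i'j'})\, d\nu_t,
\]
which I would evaluate using the explicit description of $d\nu_t$ as the $K$-average of a heat kernel density on $K_{\mathbb{C}}$ against the biinvariant volume, together with the polar decomposition $K_{\mathbb{C}} = K\cdot\exp(i\mathfrak{k})$. Bi-$K$-invariance of $d\nu_t$ combined with left-right translation invariance of the matrix coefficients reduces the integral to one over $\exp(i\mathfrak{k})$ with a Gaussian-type radial density, and standard computations with characters of compact Lie groups then yield the value $\delta_{\lambda\lambda'}\delta_{ii'}\delta_{jj'}\, e^{tC_2(\lambda)}/\dim V_\lambda$. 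Combining with the prefactor $e^{-t(C_2(\lambda)+C_2(\lambda'))/2}$ exactly produces $\delta_{\lambda\lambda'}\delta_{ii'}\delta_{jj'}/\dim V_\lambda$, showing that $C_t$ is an isometry on the dense span of matrix coefficients, and hence extends to an isometry on all of $L^2(K,dx)$.

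For surjectivity, I would show that the holomorphically continued matrix coefficients $\{\mathcal{C}(R^\lambda_{ij})\}$ are dense in $\mathcal{H}L^2(K_{\mathbb{C}},d\nu_t)$. Any $F \in \mathcal{H}L^2(K_{\mathbb{C}},d\nu_t)$ orthogonal to all of them would, upon restriction to $K$, pair trivially with every $R^\lambda_{ij}$ (absorbing the positive eigenvalue factors $e^{-tC_2(\lambda)/2}$ into the test vector), forcing $F\lvert_K = 0$ in a suitable distributional sense, and then holomorphy on the connected complex manifold $K_{\mathbb{C}}$ together with the fact that $K$ is a totally real submanifold of maximal dimension forces $F\equiv 0$. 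Therefore the isometry $C_t$ has dense image, hence is unitary. The main technical obstacle is the explicit norm computation for analytically continued matrix coefficients against $d\nu_t$; everything else is either formal or a standard density argument.
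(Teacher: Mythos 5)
The paper does not prove this statement: it is imported verbatim from Hall \cite{Hall94}, so there is no internal proof to compare against. Your outline does follow Hall's original argument for the averaged-measure version of the transform (diagonalize on the Peter--Weyl basis, compute the $\nu_t$-norms of the analytically continued matrix entries, then check density), and the isometry half is structurally sound. Be aware, though, that essentially all of the analytic content is concentrated in the one integral you dispose of as ``standard computations with characters'': reducing $\int_{K_{\mathbb{C}}}\overline{\mathcal{C}(R^\lambda_{ij})}\,\mathcal{C}(R^{\lambda'}_{i'j'})\,d\nu_t$ to $\delta_{\lambda\lambda'}\delta_{ii'}\delta_{jj'}\,e^{tC_2(\lambda)}/\dim V_\lambda$ requires the explicit form of the $K$-averaged heat kernel density on $K_{\mathbb{C}}$ and a nontrivial Gaussian-type identity on $\exp(i\mathfrak{k})$; in the language of the present paper this is exactly the statement that (\ref{growth}) holds \emph{exactly} for $h=|Y|^2/2$, which Theorem \ref{316} shows fails for every non-quadratic $h$. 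So this step is the theorem, not a routine verification, and as written you have asserted rather than proved it.

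The genuine gap is in the surjectivity step. From $\langle F,\mathcal{C}(R^\lambda_{ij})\rangle_{\nu_t}=0$ for all $\lambda,i,j$ you cannot directly conclude that $F|_K=0$ ``in a suitable distributional sense'': the pairing is an integral over all of $K_{\mathbb{C}}$ against a Gaussian-type weight, not a boundary pairing on $K$, and a priori $F|_K$ need not even lie in $L^2(K,dx)$, so ``absorbing the eigenvalue factors into the test vector'' does not produce a statement about Fourier coefficients of $F|_K$. The clean repair is representation-theoretic: since $\nu_t$ is bi-$K$-invariant, the left-right action of $K\times K$ on $\mathcal{H}L^2(K_{\mathbb{C}},d\nu_t)$ is unitary; the isotypic component of type $V_{\lambda\otimes\lambda^*}$ inside the holomorphic functions is exactly the $d_\lambda^2$-dimensional span of the $\mathcal{C}(R^\lambda_{ij})$ (a holomorphic function transforming in that type restricts on $K$ to a combination of the $R^\lambda_{ij}$ by ordinary Peter--Weyl, and uniqueness of analytic continuation off the maximal totally real submanifold $K$ identifies it with the corresponding combination of the $\mathcal{C}(R^\lambda_{ij})$); and the isotypic projections of any $F$ sum to $F$ in norm by Peter--Weyl for unitary representations on Hilbert spaces. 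Orthogonality to every $\mathcal{C}(R^\lambda_{ij})$ then annihilates every isotypic projection and forces $F=0$. With that substitution, and with the central integral actually carried out, your plan yields a complete proof.
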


In \cite{Hall02,Florentino-Matias-Mourao-Nunes05,Florentino-Matias-Mourao-Nunes06,kirwin-mourao-nunes12}, the geometric quantization of $T^{*}K$ was related to the CST. In the present paper we continue the above study: namely we use the extension of the Thiemann complexifier method \cite{Thiemann96} to the geometric quantization of $T^{*}K$ to relate the unitarity of the (generalized) CSTs considered in \cite{kirwin-mourao-nunes12} to the Mackey-Stone-Von Neumann theorem.

The prequantum bundle $L$ is the trivial bundle $T^{\ast}K\times{{\mathbb{C}}}$, so its sections are just functions on $T^{\ast}K$. The half-form quantization of $T^{\ast}K$ in the vertical polarization\footnote{Throughout, $\left\langle V_{j},j=1,...,n\right\rangle _{\mathbb{C}}$ denotes the distribution whose fiber at a point $m$ is the complex span of the vectors
$V_{j}(m)\in T_{m}M\otimes{\mathbb{C}},j=1,...,n.$}
\[
{{\mathcal{P}}}_{Sch}=\langle\frac{\partial}{\partial y^{i}},i=1,\dots,n\rangle_{{\mathbb{C}}}
\]
then produces
\[
{{\mathcal{H}}}_{Sch}=\{f\otimes\sqrt{dx},f\in L^{2}(K,dx)\}\cong L^{2}(K,dx),
\]
where $dx=w^{1}\wedge\cdots\wedge w^{n}$ is the normalized Haar measure and where we write the pullback of $f\in L^{2}(K,dx)$ to $T^{\ast}K$ also by $f$.

As in \cite{kirwin-mourao-nunes12}, let us consider an infinite-dimensional family of K\"{a}hler structures $(T^{\ast}K,\omega,J_{\tau,h})$, labeled by a pair $(\tau,h)$, where $\tau\in{{{\mathbb{C}}}^{+}}$ and the function
\begin{equation}
h\ :\ K\times{\mathfrak{k}}\ \rightarrow\ {{\mathbb{R}}}, \label{aha}
\end{equation}
satisfies the following properties:
\begin{t_enumerate}
\item $h(x,Y)$ is an $Ad$-invariant smooth function depending only on $Y\in{\mathfrak{k}}$, that is,
it is a $K\times K$-invariant function on $T^*K$
\item the Hessian $H(Y)$ of $h$ is positive definite at every point $Y\in{\mathfrak{k}}$, and
\item the operator norm $||H(Y)||$ has nonzero lower bound.
\end{t_enumerate}
Denote the set of such functions $h$ by $\operatorname{Conv}(K).$

We consider the following diffeomorphisms
\begin{equation}
\begin{array}
[c]{ccccc}
T^{\ast}K & \overset{\alpha_{h}}{\rightarrow} & T^{\ast}K & \overset
{\psi_{\tau}}{\rightarrow} & K_{{\mathbb{C}}}\\
(x,Y) & \mapsto & (x,u(Y)) & \mapsto & xe^{\tau u(Y)},
\end{array}
\label{2diff}
\end{equation}
where $\alpha_{h}$ is the Legendre transform defined by $h$ (see \cite{Neeb00,kirwin-mourao-nunes12}) given by
\[
\alpha_{h}(x,Y)=\left(  x,\sum_{j=1}^{n}u^{j}(Y)T_{j}\right)  =\left(x,\sum_{j=1}^{n}\frac{\partial h}{\partial y_{j}}\ T_{j}\right)
\]
where $u$ denotes the gradient of $h$ and $\{T_j\}_{j=1,\dots,n}$ is an orthonormal basis for $\mathfrak k$. The diffeomorphism $\psi_{\tau}$ is studied in \cite{Neeb00,Florentino-Matias-Mourao-Nunes05,Florentino-Matias-Mourao-Nunes06,Hall-Kirwin,Lempert-Szoke10}. Let $J_{\tau,h}$ be the complex structure induced on $T^{\ast}K$ by the
diffeomorphism
\begin{align}
\Psi_{\tau,h}\  &  :=\psi_{\tau}\circ\alpha_{h}:\ T^{\ast}K\rightarrow K_{\mathbb{C}}\label{newdiff}\\
\Psi_{{\tau,h}}(x,Y)  &  =xe^{\tau u},\nonumber
\end{align}
i.e. the unique complex structure on $T^{\ast}K$ for which the map in (\ref{newdiff}) is a biholomorphism.

\begin{theorem}\cite{Neeb00,kirwin-mourao-nunes12}
\label{kahler}
For any $\tau\in{\mathbb{C}}^{+}$, the pair $(\omega,J_{\tau,h})$ defines a K\"ahler structure on $T^{*}K$, with K\"ahler potential
\begin{equation}
\label{kapo}\kappa(Y)=2\tau_{2}(B(Y,u(Y))-h(Y)).
\end{equation}
In particular, the corresponding K\"ahler polarization ${\mathcal{P}}_{\tau,h}:=T^{(1,0)}T^{*}K$ is positive.
\end{theorem}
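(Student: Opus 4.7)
The plan is to exhibit $\kappa(Y)=2\tau_{2}(B(Y,u(Y))-h(Y))$ as a global K\"{a}hler potential, i.e., to verify $\omega=i\partial_{\tau,h}\bar\partial_{\tau,h}\kappa$, where the Dolbeault operators are associated with $J_{\tau,h}$. Positivity of $\mathcal{P}_{\tau,h}$ then follows from strict plurisubharmonicity of $\kappa$.

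First I would confirm that $\Psi_{\tau,h}$ is a diffeomorphism, so that $J_{\tau,h}$ is a well-defined integrable complex structure. The condition $h\in\operatorname{Conv}(K)$ makes $u=\nabla h:\mathfrak{k}\to\mathfrak{k}$ a global diffeomorphism via Hadamard's theorem (uniform positivity of $H(Y)$), so $\alpha_{h}$ is a diffeomorphism of $T^{\ast}K$; the map $\psi_{\tau}$ is a diffeomorphism for $\tau\in\mathbb{C}^{+}$ by the Cartan polar decomposition $K_{\mathbb{C}}=K\cdot\exp(i\mathfrak{k})$. Composing, $\Psi_{\tau,h}$ is a diffeomorphism and pulls back the standard complex structure of $K_{\mathbb{C}}$ to $J_{\tau,h}$.

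Next I would verify the potential property by a direct computation. Since $u=\nabla h$, $dh=B(u,dY)$, and $B$ is symmetric,
\begin{equation*}
d\kappa=2\tau_{2}\bigl[B(dY,u)+B(Y,du)-B(u,dY)\bigr]=2\tau_{2}\,B(Y,du)=2\tau_{2}\sum_{i,j}y^{i}H^{ij}(Y)\,dy^{j}.
\end{equation*}
A frame of $(1,0)$-forms for $J_{\tau,h}$ is furnished by the components of the pulled-back Maurer--Cartan form
\begin{equation*}
\Psi_{\tau,h}^{\ast}(g^{-1}dg)=\mathrm{Ad}_{e^{-\tau u}}(x^{-1}dx)+\tau\,\Phi(-\mathrm{ad}_{\tau u})(du),\qquad \Phi(z)=(e^{z}-1)/z,
\end{equation*}
whose conjugates give the $(0,1)$ frame. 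Projecting $d\kappa$ onto these types and applying $\bar\partial$ then $\partial$, one matches the outcome against the canonical symplectic form $\omega=-\sum dy^{i}\wedge w^{i}+\tfrac{1}{2}\sum y^{i}c^{i}_{jk}w^{j}\wedge w^{k}$, using that $\kappa$ depends only on $Y$ and $H^{ij}$ is symmetric. The non-abelian contributions (the $\Phi$-correction and the structure-constant piece of $\omega$) combine symmetrically and cancel, yielding $\omega=i\partial_{\tau,h}\bar\partial_{\tau,h}\kappa$. Strict positivity of the associated Levi form, proportional to $\tau_{2}H(Y)$, is immediate from $\tau_{2}>0$ and the uniformly positive-definite Hessian in $\operatorname{Conv}(K)$, giving the K\"{a}hler property and positivity of $\mathcal{P}_{\tau,h}=T^{(1,0)}T^{\ast}K$.

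The principal obstacle is that the operator $\Phi(-\mathrm{ad}_{\tau u})$ in the Maurer--Cartan frame has no closed form for non-abelian $K$. The saving observation is that it appears only as an intermediate step in a $\partial\bar\partial$ computation applied to a function of $Y$ alone; after antisymmetrization only the symmetric bilinear form $H(Y)$ survives, bypassing the ad-exponential and leaving precisely the canonical form $\omega$.
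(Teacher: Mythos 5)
The paper itself does not prove this statement --- it is quoted from \cite{Neeb00,kirwin-mourao-nunes12} --- so your proposal must stand on its own. Your overall strategy (exhibit $\kappa$ as a global K\"ahler potential for $J_{\tau,h}$ and read off positivity) is the right one and is essentially how the cited references proceed, but the two steps that carry all the weight are asserted rather than proved, and the mechanisms you invoke for them are not the correct ones. The claim that after ``antisymmetrization'' the $\Phi(-\mathrm{ad}_{\tau u})$ factors drop out of $i\partial\bar\partial\kappa$ is not substantiated, and antisymmetrization is not why they disappear. The identity $\omega=i\partial_{\tau,h}\bar\partial_{\tau,h}\kappa$ does hold, but the actual mechanism is that $d\kappa=2\tau_{2}B(Y,du)$ is paired against $Y$, that $[u(Y),Y]=0$ so $Y\in\ker\mathrm{ad}_{u(Y)}$, and that $\mathrm{ad}_{u}$ is $B$-antisymmetric: transposing every function of $\mathrm{ad}_{u}$ onto $Y$ evaluates it at $0$. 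Concretely, inverting the coframe $\{\Omega^{j}_{\tau,h},\bar\Omega^{j}_{\tau,h}\}$ of (\ref{tauforms}) one finds
\[
\partial_{\tau,h}\kappa=-i\Bigl(\theta+\tfrac{\tau}{2\tau_{2}}\,d\kappa\Bigr),
\]
whence $\bar\partial\partial\kappa=d(\partial\kappa)=-i\,d\theta=i\omega$ and $i\partial\bar\partial\kappa=\omega$. Without this observation your computation does not close; with it, the ``saving observation'' you state is unnecessary.

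The positivity argument is also wrong as stated: the Levi form of $\kappa$ is not proportional to $\tau_{2}H(Y)$. Already for $K=S^{1}$, writing $z=x+\tau u(y)$ one computes $\partial_{z}\partial_{\bar z}\kappa=(2\tau_{2})^{-1}(h^{\ast})''(u)=(2\tau_{2})^{-1}H(y)^{-1}$, and for non-abelian $K$ the Levi form additionally carries the $\mathrm{ad}_{u}$-dependent factors visible in the density $\eta(\tau_{2}u)\sqrt{\det H}$ of Lemma \ref{BKSnorm}. Its positivity is the substance of Neeb's convexity theorem and is not ``immediate'' from $H>0$. If you wish to avoid reproving Neeb's result, you can argue instead that once $\omega=i\partial\bar\partial\kappa$ is established the Levi form is everywhere non-degenerate (because $\omega$ is), that it is positive near the zero section $Y=0$ where the structure is modelled on the flat case, and that $T^{\ast}K$ is connected, so it is positive definite everywhere. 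Either way this step needs an argument; as written there is a genuine gap in both the potential identity and the positivity.
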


Thus, the polarizations in the interior ${\mathcal T}$ of the family $\overline {\mathcal T}$ in (\ref{compact}) are K\"ahler. Note that we will say an object (function, form, etc...) is covariantly constant along a polarization $\mathcal{P}$ if its derivative along every vector field in $\overline{\mathcal{P}}$ is zero, so that in particular, covariantly constant along a K\"ahler polarization just means holomorphic with respect to that polarization.

A left-invariant holomorphic frame of type-$(1,0)$ forms is given by $\{\Omega_{\tau,h}^{i}\}_{i=1,\dots,n}$ where
\begin{equation}
\Omega_{\tau,h}^{j}=\sum_{k=1}^{n}\left[  e^{-\tau ad_{u(Y)}}\right]_{k}^{j}w^{k} +\left[\frac{1-e^{-\tau ad_{u(Y)}}}{ad_{u(Y)}}H(Y)\right]_{k}^{j}dy^{k} \label{tauforms}%
\end{equation}
(see \cite[Lemma 4.3]{kirwin-mourao-nunes12}). Let $\Omega_{\tau,h}=\Omega_{\tau,h}^{1}\wedge\dots\wedge\Omega_{\tau,h}^{n}$ denote the holomorphic left $K_{{\mathbb{C}}}$--invariant trivializing section of the canonical bundle ${\mathcal{K}}^{{\mathcal{P}}_{{\tau,{h}}}}=\bigwedge^{n}({\mathcal{P}}_{\tau,h})^{\ast}$ corresponding to ${\mathcal{P}}_{\tau,h}$.

\begin{theorem}\cite{kirwin-mourao-nunes12}
\label{tautwopositive}
Let $\tau\in{\mathbb{C}}^{+}\cup{{\mathbb{R}}}$. Then a section of $L\otimes\sqrt{{\mathcal{K}}^{{\mathcal{P}}_{\tau,h}}}$ is covariantly constant along the polarization $\overline {{\mathcal{P}}_{\tau,h}}$ if and only if it is of the form
\[
f(xe^{\tau u})\beta_{\tau,h}(Y)\otimes\sqrt{\Omega_{\tau,h}},
\]
for some function $f$, holomorphic in its argument, where
\[
\beta_{\tau,h}(Y)=e^{i\tau(B(u,Y)-h(Y))} \pi^{-\frac{n}{4}}\ .
\]
The Hilbert space $\mathcal{H}_{\tau,h}$ is then
\begin{equation}
\mathcal{H}_{\tau,h}=\left\{  s=f(xe^{\tau u})\beta_{\tau,h}(Y)\otimes \sqrt{\Omega_{\tau,h}}\ ,\,\,f\text{\ is\ holomorphic\ and\ }||s||\ <\infty \right\}  . \label{hilspth}
\end{equation}
\end{theorem}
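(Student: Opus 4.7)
The strategy is to use the biholomorphism $\Psi_{\tau,h}:T^*K\to K_{\mathbb{C}}$ of Theorem \ref{kahler} to identify $J_{\tau,h}$-holomorphic functions on $T^*K$ with pullbacks $f(xe^{\tau u})$ of holomorphic $f$ on $K_{\mathbb{C}}$, and to exploit that $\Omega_{\tau,h}$ is a nonvanishing holomorphic section of $\mathcal{K}^{\mathcal{P}_{\tau,h}}$. Consequently $\sqrt{\Omega_{\tau,h}}$ is a global holomorphic trivialization of $\sqrt{\mathcal{K}^{\mathcal{P}_{\tau,h}}}$, and a half-form $g\otimes\sqrt{\Omega_{\tau,h}}$ is polarized iff $\nabla_{\bar Z}\,g=0$ for every $\bar Z\in\overline{\mathcal{P}_{\tau,h}}$, where $\nabla=d+i\theta$ denotes the prequantum connection on the trivial bundle $L$ (curvature $i\,d\theta=-i\omega$).

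Making the ansatz $g=f(xe^{\tau u})\,\beta_{\tau,h}$ with $f$ holomorphic on $K_{\mathbb{C}}$, the Leibniz rule reduces the polarization condition to the assertion that $\alpha:=d\log\beta_{\tau,h}+i\theta$ is of type $(1,0)$ with respect to $J_{\tau,h}$. Using $dh=B(u,dY)$ (from $u$ being the gradient of $h$) and $du^j=\sum_k H_{jk}\,dy^k$, one finds
\[
\alpha \;=\; i\tau\,B(du,Y)+i\theta \;=\; i\sum_j y^j\!\left(w^j+\tau\sum_k H_{jk}\,dy^k\right).
\]
I then claim $\alpha=i\sum_j y^j\,\Omega_{\tau,h}^j$, with $\Omega^j_{\tau,h}$ as in \eqref{tauforms}. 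Matching the coefficients of $w^k$ and $dy^k$ reduces this to the two identities $\sum_j y^j [e^{-\tau ad_u}]^j_k=y^k$ and $\sum_j y^j\bigl[\tfrac{1-e^{-\tau ad_u}}{ad_u}H\bigr]^j_k=\tau\sum_j y^j H_{jk}$.

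Both identities rest on two facts: (i) $[u(Y),Y]=0$, obtained by differentiating the $Ad$-invariance $h(Ad_{e^{tX}}Y)=h(Y)$ at $t=0$ and using the $Ad$-invariance of $B$; and (ii) $ad_u$ is skew-adjoint with respect to $B$ (again from $Ad$-invariance of $B$). By (ii), summing against $y^j$ effectively contracts with $Y$ through the $B$-transpose, turning $e^{-\tau ad_u}$ into $e^{\tau ad_u}$ and $\tfrac{1-e^{-\tau ad_u}}{ad_u}$ into $\tfrac{e^{\tau ad_u}-1}{ad_u}$; by (i), $e^{\tau ad_u}(Y)=Y$ and $\tfrac{e^{\tau ad_u}-1}{ad_u}(Y)=\tau Y$ (the leading term in the power series of $(e^{\tau x}-1)/x$ at $x=0$), which yields the two identities.

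For the converse direction, any polarized $g\otimes\sqrt{\Omega_{\tau,h}}$ can be written as $(g/\beta_{\tau,h})\cdot\beta_{\tau,h}\otimes\sqrt{\Omega_{\tau,h}}$; since $\beta_{\tau,h}\otimes\sqrt{\Omega_{\tau,h}}$ is polarized, Leibniz forces $g/\beta_{\tau,h}$ to be $J_{\tau,h}$-holomorphic, hence of the form $f(xe^{\tau u})$ via $\Psi_{\tau,h}$. The description of $\mathcal{H}_{\tau,h}$ in \eqref{hilspth} is then the definition of the half-form Hilbert space as the $L^2$-closure of square-integrable polarized sections. The main obstacle is the algebraic identification of $\alpha$ with $i\sum_j y^j\,\Omega_{\tau,h}^j$, but (i) and (ii) collapse the matrix-exponential actions on $Y$ to simple closed forms, keeping the computation short.
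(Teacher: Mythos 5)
Your argument is correct. Note that the paper itself offers no proof of Theorem \ref{tautwopositive}; it is quoted from \cite{kirwin-mourao-nunes12}, so there is no in-text argument to compare against. Your route is the natural one and all the steps check out: the reduction of covariant constancy to $\nabla_{\bar Z}\beta_{\tau,h}=0$ uses exactly the two facts the paper supplies (that $\Psi_{\tau,h}$ is a biholomorphism and that $\sqrt{\Omega_{\tau,h}}$ is a holomorphic trivialization of the half-form bundle), the cancellation $d\bigl(B(u,Y)-h\bigr)=B(du,Y)$ is right since $u=\nabla h$, and the identity $i\theta+d\log\beta_{\tau,h}=i\sum_j y^j\Omega^j_{\tau,h}$ does follow from $[u(Y),Y]=0$ (which the paper itself records in the lemma preceding Theorem \ref{thm:asymp1}) together with skew-adjointness of $ad_{u}$ and symmetry of $H$, which collapse $e^{\tau ad_u}Y=Y$ and $\tfrac{e^{\tau ad_u}-1}{ad_u}Y=\tau Y$ as entire functions of $ad_u$ (no invertibility of $ad_u$ is needed). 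The converse via division by the nowhere-vanishing $\beta_{\tau,h}$ and the identification of $\mathcal{H}_{\tau,h}$ with the $L^2$-closure are likewise standard. The only point worth making explicit is that ``covariantly constant'' for the half-form factor is interpreted, as in the proof of Proposition \ref{pbetarep}, as holomorphicity of $\sqrt{\Omega_{\tau,h}}$ with respect to $\mathcal{P}_{\tau,h}$; with that convention your Leibniz reduction is complete.
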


Recall that the BKS (Blattner--Kostant--Sternberg) norm of the half-form $\sqrt{\Omega_{\tau,h}}$ is defined by comparing $\bar{\Omega}_{\tau,h}\wedge\Omega_{\tau,h}$ to $(2i)^n (-1)^{n(n-1)/2}$ times the Liouville form\footnote{In the last section of this paper, we will consider the semiclassical limit and hence need to include $\hbar.$ For this reason, we define the Liouville form to be
\[
\epsilon:=\frac{\omega^{n}}{\hbar^{n}n!}.
\]
As we do not yet need to keep track of factor of $\hbar,$ we set it equal to $1$ for the moment.} $\epsilon:=\omega^n/n!$. (The constant is chosen so that for the model case $M=\mathbb{C}$, the norm of $\sqrt{dz}$ is $1$.) Let $\eta(Y)$ be the $Ad$-invariant function defined for $Y$ in a chosen fixed Cartan subalgebra of $\mathfrak k$ by
\[
\eta(Y) = \Pi_{\alpha\in \Delta^+} \frac{\sinh \alpha(Y)}{\alpha(Y)},
\]
where $\Delta^+$ is the corresponding set of positive roots.

\begin{lemma}
\cite[Lemma 4.3]{kirwin-mourao-nunes12}\label{BKSnorm} The BKS norm of $\sqrt{\Omega_{\tau,h}}$ is
\[
\left\vert \sqrt{\Omega_{\tau,h}}\right\vert ^{2}=\tau_{2}^{n/2}\eta(\tau_{2}u(Y))\sqrt{\det H}.
\]
\end{lemma}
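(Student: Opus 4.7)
The plan is to compute $\bar\Omega_{\tau,h}\wedge\Omega_{\tau,h}$ directly from the explicit formula (\ref{tauforms}), express it as a determinant of a $2n\times 2n$ block matrix built from the coefficient matrices, and then diagonalize $ad_{u(Y)}$ to evaluate that determinant. Writing
\[
\Omega_{\tau,h}^j = A^j_k w^k + B^j_k dy^k,\qquad A=e^{-\tau\,ad_{u(Y)}},\qquad B=\frac{1-e^{-\tau\,ad_{u(Y)}}}{ad_{u(Y)}}H(Y),
\]
and treating $\{w^k,dy^k\}$ as a local frame of $1$-forms on the cotangent fibre, I get
\[
\bar\Omega_{\tau,h}\wedge\Omega_{\tau,h}=\det M\cdot w^1\wedge\cdots\wedge w^n\wedge dy^1\wedge\cdots\wedge dy^n,\qquad M=\begin{pmatrix}\bar A & \bar B\\ A & B\end{pmatrix}.
\]
A short sign chase using $\omega=-\sum dy^i\wedge w^i-\sum y^i\,dw^i$ (whose second piece is $w$-only, and hence cannot contribute to $\omega^n$) identifies $w^1\wedge\cdots\wedge w^n\wedge dy^1\wedge\cdots\wedge dy^n$ with $(-1)^{n(n-1)/2}\epsilon$, so the BKS defining relation reduces the problem to computing $\det M$.

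The key reduction is that $A$, $B$, $\bar A$, $\bar B$ all mutually commute: they are polynomials in $ad_{u(Y)}$ and $H(Y)$, and $Ad$-invariance of $h$ forces $H$ to commute with $ad_{u(Y)}$ (equivariance $u(Ad_g Y)=Ad_g u(Y)$ implies $Ad_g H(Y)Ad_g^{-1}=H(Y)$ for $g$ in the Cartan torus containing $u$). This lets me apply the commuting‑block formula $\det M=\det(\bar A B-\bar B A)$. The difference telescopes cleanly:
\[
\bar A B-\bar B A=\frac{e^{-\bar\tau\,ad_u}-e^{-\tau\,ad_u}}{ad_u}H=2i\,e^{-\tau_1 ad_u}\,\frac{\sin(\tau_2\,ad_u)}{ad_u}\,H,
\]
using $\tau-\bar\tau=2i\tau_2$. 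Because $ad_u$ is skew-adjoint (hence traceless), $\det(e^{-\tau_1 ad_u})=1$, so
\[
\det M=(2i)^n\det\!\left(\frac{\sin(\tau_2\,ad_u)}{ad_u}\right)\det H.
\]

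Now diagonalize $ad_{u(Y)}$ on $\mathfrak k_{\mathbb C}$: eigenvalues are $0$ on the (rank-$r$) Cartan $\mathfrak t$ and $\pm i\alpha(u(Y))$ on each root-space pair $\mathfrak g_\alpha\oplus\mathfrak g_{-\alpha}$. On $\mathfrak t$ the operator $\sin(\tau_2\,ad_u)/ad_u$ acts as multiplication by $\tau_2$; on each root-space pair both eigenvalues give $\sinh(\tau_2\alpha(u))/\alpha(u)$. Thus
\[
\det\!\left(\frac{\sin(\tau_2\,ad_u)}{ad_u}\right)=\tau_2^{r}\prod_{\alpha\in\Delta^+}\!\left(\frac{\sinh(\tau_2\alpha(u))}{\alpha(u)}\right)^{\!2}.
\]
Combining with the BKS defining identity $\bar\Omega\wedge\Omega=(2i)^n(-1)^{n(n-1)/2}|\sqrt{\Omega_{\tau,h}}|^{4}\,\epsilon$ (squared because a half-form pairs with its conjugate to give the full canonical form) and taking the positive square root, I obtain
\[
|\sqrt{\Omega_{\tau,h}}|^{2}=\tau_2^{r/2}\prod_{\alpha\in\Delta^+}\frac{\sinh(\tau_2\alpha(u))}{\alpha(u)}\sqrt{\det H}=\tau_2^{n/2}\eta(\tau_2 u(Y))\sqrt{\det H},
\]
after using $n=r+2|\Delta^+|$ and $\alpha(\tau_2 u)=\tau_2\alpha(u)$ to absorb factors of $\tau_2$ into $\eta$.

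The main obstacle will be the careful bookkeeping rather than anything conceptual: verifying the commuting‑block determinant formula applies (which needs the $Ad$-invariance argument making $H$ commute with $ad_u$), getting the sign relating $w\wedge dy$ to $\epsilon$ right, and noticing that the eigenvalue computation on the $2$-dimensional real root spaces naturally produces a perfect square, so that the half‑form square root recovers the first power of $\eta$ and the $\sqrt{\det H}$ in the claimed formula.
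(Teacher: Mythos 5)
Your computation is correct. Note that the paper itself offers no proof of this lemma --- it is imported verbatim from \cite[Lemma 4.3]{kirwin-mourao-nunes12} --- so there is no internal argument to compare against; what you have written is a self-contained verification, and it is sound. The three points on which the argument genuinely hinges are all handled properly: (i) the commuting-block determinant formula $\det M=\det(\bar A B-\bar B A)$ requires $[H(Y),\mathrm{ad}_{u(Y)}]=0$, which your equivariance argument gives (equivalently, one can use the paper's own identity $\mathrm{ad}_Y=H(Y)^{-1}\mathrm{ad}_{u(Y)}$ together with $[H(Y),\mathrm{ad}_Y]=0$, or the fact that $u$ preserves the Cartan); (ii) the telescoping to $\frac{e^{-\bar\tau\,\mathrm{ad}_u}-e^{-\tau\,\mathrm{ad}_u}}{\mathrm{ad}_u}H$ and the vanishing of $\det e^{-\tau_1\mathrm{ad}_u}$ by tracelessness of $\mathrm{ad}_u$; and (iii) the eigenvalue count $\tau_2^{\,r}\prod_{\alpha\in\Delta^+}\bigl(\sinh(\tau_2\alpha(u))/\alpha(u)\bigr)^2$, whose square root recombines with $n=r+2|\Delta^+|$ to give exactly $\tau_2^{n/2}\eta(\tau_2 u(Y))\sqrt{\det H}$ under the paper's normalization $(2i)^n(-1)^{n(n-1)/2}\epsilon$ (which your $n=1$-consistent sign chase for $w^1\wedge\cdots\wedge w^n\wedge dy^1\wedge\cdots\wedge dy^n$ confirms). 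The only cosmetic slip is describing $\{w^k,dy^k\}$ as a frame ``on the cotangent fibre'' rather than on $T^*K$.
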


\begin{corollary}
The Hilbert space of square-integrable $\mathcal{P}_{\tau,h}$-polarized sections of $L\otimes\sqrt{\mathcal{K}^{\mathcal{P}_{\tau,h}}}$ is isomorphic to
\[
L_{J_{\tau,h}\text{-hol}}^{2}(T^{\ast}K,e^{-\kappa(Y)}\eta(\tau_{2}u(Y))\sqrt{\det H}dxdY)
\]
\end{corollary}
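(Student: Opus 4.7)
The plan is to combine the two preceding results (Theorem \ref{tautwopositive} and Lemma \ref{BKSnorm}) with a computation of the pointwise Hermitian norm on $L\otimes\sqrt{\mathcal{K}^{\mathcal{P}_{\tau,h}}}$ and an expression of the Liouville form $\epsilon=\omega^n/n!$ in terms of $dx\,dY$. By Theorem \ref{tautwopositive}, an arbitrary $\overline{\mathcal{P}_{\tau,h}}$-covariantly-constant section has the form $s=f(xe^{\tau u})\,\beta_{\tau,h}(Y)\otimes\sqrt{\Omega_{\tau,h}}$, so the first step is to identify the scalar coefficient of $\sqrt{\Omega_{\tau,h}}$ with a $J_{\tau,h}$-holomorphic function: this is immediate because $\Psi_{\tau,h}:T^{*}K\to K_{\mathbb C}$, $(x,Y)\mapsto xe^{\tau u}$, is a biholomorphism by definition of $J_{\tau,h}$, and $\beta_{\tau,h}$ is a nonvanishing smooth factor (not holomorphic, but irrelevant for the assignment $s\leftrightarrow F:=f\circ\Psi_{\tau,h}$ which sets up the bijection).

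Next I would compute the pointwise squared norm of $s$. Since $L$ is the trivial line bundle with the obvious Hermitian structure, we have
\[
|\beta_{\tau,h}(Y)|^{2}=e^{-2\tau_{2}(B(u,Y)-h(Y))}\pi^{-n/2}=e^{-\kappa(Y)}\pi^{-n/2},
\]
by the K\"ahler potential formula (\ref{kapo}) in Theorem \ref{kahler}. Lemma \ref{BKSnorm} provides
\[
\bigl|\sqrt{\Omega_{\tau,h}}\bigr|^{2}=\tau_{2}^{n/2}\eta(\tau_{2}u(Y))\sqrt{\det H}.
\]
Multiplying these gives the full pointwise norm of $s$, up to the overall constant $\pi^{-n/2}\tau_{2}^{n/2}$ which will be absorbed into the isomorphism (the statement is an \emph{isomorphism}, not an isometry, so global positive constants are harmless).

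Then I would identify the Liouville form in the $(x,Y)$ coordinates. Using $\theta=\sum_{i}y^{i}w^{i}$ and $\omega=-d\theta$, a standard computation yields
\[
\epsilon=\frac{\omega^{n}}{n!}=\pm\,w^{1}\wedge\cdots\wedge w^{n}\wedge dy^{1}\wedge\cdots\wedge dy^{n}=\pm\,dx\,dY,
\]
where $dx$ is the Haar measure and $dY$ is Lebesgue measure on $\mathfrak k\cong\mathfrak k^{*}$. Assembling everything, the BKS inner product of two sections $s_{1},s_{2}\in\mathcal{H}_{\tau,h}$ becomes
\[
\langle s_{1},s_{2}\rangle=\mathrm{const}\cdot\int_{T^{*}K}\overline{F_{1}}\,F_{2}\,e^{-\kappa(Y)}\eta(\tau_{2}u(Y))\sqrt{\det H}\,dx\,dY,
\]
with $F_{j}=f_{j}\circ\Psi_{\tau,h}$ holomorphic on $(T^{*}K,J_{\tau,h})$, which is precisely the inner product on the claimed weighted $L^{2}$-space.

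The only step requiring care is the bookkeeping of constants and signs: the powers of $\pi$, $\tau_2$, and the $(-1)^{n(n-1)/2}$ from reordering the Liouville form must match the normalization conventions fixed in the definition of the BKS pairing (in particular the $(2i)^{n}(-1)^{n(n-1)/2}$ calibration mentioned just before Lemma \ref{BKSnorm}). As long as one verifies that these constants combine consistently with the factor arising in Lemma \ref{BKSnorm}, the corollary follows at once. This is the primary, though essentially routine, obstacle.
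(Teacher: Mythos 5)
Your proposal is correct and is exactly the argument the paper intends: the corollary is stated without proof as an immediate consequence of Theorem \ref{tautwopositive} and Lemma \ref{BKSnorm}, and your computation ($|\beta_{\tau,h}|^{2}=\pi^{-n/2}e^{-\kappa(Y)}$ via (\ref{kapo}), the BKS norm of $\sqrt{\Omega_{\tau,h}}$, and $\epsilon=\pm\,dx\,dY$) supplies precisely the missing bookkeeping. The resulting overall constant $(\tau_{2}/\pi)^{n/2}$ is consistent with the normalization that reappears in the formula for $a_{\lambda}(\hbar,\tau_{2})^{2}$ in Section \ref{sasex}, so your handling of it is fine.
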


As in the case of quadratic $h$ studied in \cite{Hall-Kirwin}, the family $\{J_{\tau,h}\}$ of K\"{a}hler structures can be obtained from the vertical polarization by pushing forward by the complex-time flow of the Hamiltonian function $h$. For this reason, $h$ is sometimes called the \emph{Thiemann complexifier} \cite{Thiemann96}. We call a family of polarizations $\{{\mathcal P}_{\tau,h}, \tau \in {\mathbb C}^+\}$ a Thiemann ray of polarizations.

\begin{theorem}\cite{Hall-Kirwin, kirwin-mourao-nunes12}
\label{thmomegatau}
Let $\tau\in{\mathbb{C}}^{+}\cup{{\mathbb{R}}}$. Then
\begin{equation}
\label{thray}{\mathcal{P}}_{{\tau,h}}=e^{{\bar \tau}{\mathcal{L}}_{X_{h}}}{\mathcal{P}}_{Sch},
\end{equation}
as distributions.
\end{theorem}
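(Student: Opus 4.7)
The plan is to verify the identity on a frame of $\mathcal{P}_{Sch}$ and then invoke a dimension count. Since $X_h$, $u$, $H$, and the lifted left-invariant fields $X_k$ are all real, complex conjugation reduces the statement to
$$e^{\tau\mathcal{L}_{X_h}}\mathcal{P}_{Sch} = \overline{\mathcal{P}_{\tau,h}},$$
and $\overline{\mathcal{P}_{\tau,h}}$ is precisely the common kernel of the $(1,0)$-frame $\{\Omega^k_{\tau,h}\}$ of (\ref{tauforms}). So the task reduces to showing that each $e^{\tau\mathcal{L}_{X_h}}\partial/\partial y^j$ is annihilated by every $\Omega^k_{\tau,h}$.

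I would carry out the iterated Lie brackets. Because $X_h = u^k(Y)\, X_k$ and the lifted left-invariant fields $X_k$ annihilate functions depending only on $Y$, they simplify to
$$[X_h, \partial/\partial y^j] = -H^k_j X_k, \qquad [X_h, X_k] = \bigl[ad_{u(Y)}\bigr]^m_k X_m.$$
A trivial induction gives $(\mathcal{L}_{X_h})^p(\partial/\partial y^j) = -[(ad_u)^{p-1} H]^m_j X_m$ for $p \geq 1$, and summing the power series in $\tau$ produces the closed form
$$e^{\tau\mathcal{L}_{X_h}}\frac{\partial}{\partial y^j} = \frac{\partial}{\partial y^j} - \left[\frac{e^{\tau\, ad_{u(Y)}}-1}{ad_{u(Y)}}\,H(Y)\right]^m_j X_m.$$
Substituting this into (\ref{tauforms}) and using $w^m(X_l) = \delta^m_l$, $dy^m(\partial/\partial y^j) = \delta^m_j$ (the other two pairings vanish) together with the algebraic identity $e^{-\tau\, ad_u}\cdot(e^{\tau\, ad_u}-1)/ad_u = (1-e^{-\tau\, ad_u})/ad_u$, the two terms of $\Omega^k_{\tau,h}(e^{\tau\mathcal{L}_{X_h}}\partial/\partial y^j)$ cancel exactly. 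The $n$ resulting fields are manifestly linearly independent (their $dy^j$-components form the identity matrix), so they span the rank-$n$ distribution $\overline{\mathcal{P}_{\tau,h}}$; conjugating yields the theorem.

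The only mild subtlety is the meaning of $e^{\bar\tau\mathcal{L}_{X_h}}$ for $\tau\in\mathbb{C}^+$: since $X_h$ is real there is no honest complex-time diffeomorphism, but the formal power series in $\bar\tau$ is pointwise convergent because $ad_{u(Y)}$ is a bounded operator on the finite-dimensional Lie algebra $\mathfrak{k}_{\mathbb{C}}$. This is the vector-field analogue of the "literal power series in $\tau$" interpretation already adopted for functions in (\ref{cxevol}); for $\tau\in\mathbb{R}$ the sum coincides with the ordinary pushforward by the real-time Hamiltonian flow of $X_h$, consistent with the definition of the real polarizations at the boundary of $\overline{\mathcal{T}}$ in (\ref{compact}).
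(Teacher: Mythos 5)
Your computation is correct and is essentially the argument of the cited sources: the paper itself imports Theorem \ref{thmomegatau} from \cite{Hall-Kirwin,kirwin-mourao-nunes12} without proof, and the remark immediately following it already signals the intended method, namely applying the Lie-series $e^{\bar\tau\mathcal{L}_{X_h}}$ to the frame $\partial/\partial y^{j}$ and matching against the $(1,0)$-coframe (\ref{tauforms}). Your bracket identities, the closed form $e^{\tau\mathcal{L}_{X_h}}\partial/\partial y^{j}=\partial/\partial y^{j}-\bigl[(e^{\tau\,ad_{u}}-1)(ad_{u})^{-1}H\bigr]^{m}_{j}X_{m}$, the cancellation against $\Omega^{k}_{\tau,h}$, and the rank count all check out.
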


\begin{remark}
Recall from \cite{Hall-Kirwin,kirwin-mourao-nunes12} that (\ref{thray}) can be interpreted literally as a power series in $\bar \tau$ if the operator $e^{{\bar \tau}{\mathcal{L}}_{X_{h}}}$ is applied to appropriate sections of $\mathcal{P}_{Sch}$ such as $\partial/\partial y^j$.
\end{remark}

The Hamiltonian flow of the function $h$ lifts to half-form corrected polarized sections as follows. Let
\[
\hat{h}=\left(  i\nabla_{X_{h}}+h\right)  \otimes1+1\otimes{{\mathcal{L}}}_{X_{h}}
\]
be the Kostant--Souriau prequantum operator for the Hamiltonian function $h$. Then,

\begin{theorem}\cite{kirwin-mourao-nunes12}
\label{ththrawf}
The densely defined operator $e^{-i\tau\hat{h}}:{\mathcal{H}}_{Sch}\rightarrow{\mathcal{H}}_{\tau,h},\ \tau\in{\mathbb{C}}^{+}$ is the $J_{\tau,h}-$analytic continuation of ${\mathcal{P}}_{Sch}$-polarized real-analytic sections,
\begin{equation}
e^{-i\tau\hat{h}}(f(x)\otimes\sqrt{\Omega_{0}})=f(xe^{\tau u})\beta_{\tau,h}(Y)\otimes\sqrt{\Omega_{\tau,h}}. \label{thraywf}
\end{equation}
\end{theorem}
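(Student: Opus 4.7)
The plan is to verify the formula (\ref{thraywf}) by showing that both sides satisfy the same complex-time Schr\"odinger equation $\partial_\tau s(\tau) = -i\hat{h}\, s(\tau)$ with the same initial condition $s(0) = f(x)\otimes\sqrt{\Omega_0}$. On the dense domain of real-analytic $\mathcal{P}_{Sch}$-polarized sections, $e^{-i\tau\hat{h}}$ is defined as the formal power series in $\tau$, equivalently as the unique solution of this initial value problem. It therefore suffices to set $s(\tau)$ equal to the right-hand side of (\ref{thraywf}) and check (a) polarization, (b) initial condition, and (c) the ODE.

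Steps (a) and (b) are essentially built into the construction. The right-hand side matches the explicit form of $\mathcal{P}_{\tau,h}$-polarized sections in Theorem \ref{tautwopositive}, since $xe^{\tau u} = \Psi_{\tau,h}(x,Y)$ is the biholomorphism defining $J_{\tau,h}$; hence $f(xe^{\tau u})$ is automatically $J_{\tau,h}$-holomorphic whenever $f$ is real-analytic on $K$. At $\tau = 0$, the explicit formula (\ref{tauforms}) yields $\Omega_{0,h}^{j} = w^{j}$, so $\Omega_{0,h} = dx = \Omega_0$, and $\beta_{0,h} = \pi^{-n/4}$ is the normalization built into the identification $\mathcal{H}_{Sch}\cong L^{2}(K,dx)\otimes\sqrt{dx}$.

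The core of the argument is the ODE check. Since $\hat{h}$ is a derivation on tensor products, $-i\hat{h}\,s(\tau)$ splits as $(\nabla_{X_h}-ih)\bigl(f(xe^{\tau u})\beta_{\tau,h}\bigr)\otimes\sqrt{\Omega_{\tau,h}} - i\,f(xe^{\tau u})\beta_{\tau,h}\otimes\mathcal{L}_{X_h}\sqrt{\Omega_{\tau,h}}$, so matching with $\partial_\tau s$ decouples into separate identities for the two tensor factors. For the prequantum/base factor, $X_h = u^{j}(Y)X_{j}$ (valid because $h$ is $Ad$-invariant, forcing $[u(Y),Y]=0$), and the chain-rule identity $\partial_\tau f(xe^{\tau u}) = (X_h f)(xe^{\tau u})$ follows from $Ad_{e^{-\tau u}}u = u$; combined with $\theta(X_h) = B(Y,u)$ and the explicit derivative $\partial_\tau \beta_{\tau,h} = i(B(u,Y) - h)\beta_{\tau,h}$, the terms cancel.

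The main obstacle is the half-form identity $\partial_\tau \sqrt{\Omega_{\tau,h}} = -i\,\mathcal{L}_{X_h}\sqrt{\Omega_{\tau,h}}$: the vector field $X_h$ does not preserve $\mathcal{P}_{Sch}$ (nor, in general, $\mathcal{P}_{\tau,h}$ for $\tau\in\mathbb{C}^+$), so $\mathcal{L}_{X_h}$ on half-forms adapted to these polarizations must be understood via the Thiemann-complexifier interpretation of Theorem \ref{thmomegatau}, which expresses $\mathcal{P}_{\tau,h}$ as the complex-time Hamiltonian flow of $h$ applied to $\mathcal{P}_{Sch}$. Concretely, I would differentiate the explicit trivialization (\ref{tauforms}) in $\tau$ and match it term by term against $\mathcal{L}_{X_h}\Omega^{j}_{\tau,h}$, computed from $\mathcal{L}_{X_h}w^{k} = -u^{\ell}c^{k}_{\ell b}w^{b} + du^{k}$ (via the structure constants) and $\mathcal{L}_{X_h}dy^{k} = 0$ (since $X_h$ has no fiber component for $Ad$-invariant $h$); the manipulation reduces to the matrix identity $\partial_\tau e^{-\tau\mathrm{ad}_u} = -\mathrm{ad}_u e^{-\tau\mathrm{ad}_u}$ and its companion for $(1-e^{-\tau\mathrm{ad}_u})/\mathrm{ad}_u$. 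The resulting identity at the level of $(n,0)$-forms yields the half-form evolution after accounting for the BKS-pairing conventions on square roots. Convergence of the Taylor series in $\tau$ on real-analytic sections is guaranteed by the $\operatorname{Conv}(K)$ hypotheses, and density of real-analytic sections in $\mathcal{H}_{Sch}$ extends (\ref{thraywf}) to the full Schr\"odinger Hilbert space.
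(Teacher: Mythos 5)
The paper itself contains no proof of Theorem \ref{ththrawf} --- it is imported verbatim from \cite{kirwin-mourao-nunes12} --- so there is no internal argument to compare against. Your strategy (characterize $e^{-i\tau\hat h}$ on analytic vectors as the solution of the complex-time Schr\"odinger equation $\partial_\tau s=-i\hat h\,s$ and verify that the right-hand side of (\ref{thraywf}) solves the same initial value problem, one tensor factor at a time) is the natural one and is essentially the derivation in the cited reference. The ingredients you list do check out: for $Ad$-invariant $h$ one has $X_h=\sum_j u^j(Y)X_j$ with no fiber component, $\partial_\tau f(xe^{\tau u})=(X_hf)(xe^{\tau u})$ via $\mathrm{Ad}_{e^{-\tau u}}u=u$, $\mathcal{L}_{X_h}w^k=-[\mathrm{ad}_u]^k_b w^b+H^k_b\,dy^k$ and $\mathcal{L}_{X_h}dy^k=0$, and from (\ref{tauforms}) these combine to give exactly $\mathcal{L}_{X_h}\Omega^j_{\tau,h}=-[\mathrm{ad}_u e^{-\tau\mathrm{ad}_u}]^j_k w^k+[e^{-\tau\mathrm{ad}_u}H]^j_k\,dy^k=\partial_\tau\Omega^j_{\tau,h}$, so the frame computation you propose does close.

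Two points need repair. First, the identity just displayed carries coefficient $+1$, not $-i$: the half-form equation that is actually true is $\partial_\tau\sqrt{\Omega_{\tau,h}}=\mathcal{L}_{X_h}\sqrt{\Omega_{\tau,h}}$. For your decoupled ODE to close, the half-form summand of $\hat h$ must therefore carry a factor of $i$ (i.e.\ $1\otimes i\mathcal{L}_{X_h}$, on the same footing as $i\nabla_{X_h}$ in the prequantum summand), and the sign of the connection potential must be fixed compatibly so that the $\theta(X_h)=B(Y,u)$ term cancels against $\partial_\tau\beta_{\tau,h}=i(B(u,Y)-h)\beta_{\tau,h}$ rather than doubling it. Taking the paper's displayed $\hat h$ at face value, your two sides of the half-form identity differ by a factor of $-i$ and the verification fails; this is a convention to be pinned down explicitly, not something to defer to ``BKS-pairing conventions.'' Second, the closing claim is an overstatement: density of real-analytic vectors does not extend (\ref{thraywf}) to all of $\mathcal{H}_{Sch}$, since for $\tau_2>0$ the operator $e^{-i\tau\hat h}$ is unbounded --- by Theorem \ref{norms} and Theorem \ref{unit} it rescales $V_{\lambda\otimes\lambda^*}$ by $a_\lambda(\tau_2)\sim e^{\tau_2 h_\lambda}$, which is unbounded in $\lambda$. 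The theorem asserts only a densely defined operator, and that is all your argument can (and needs to) deliver.
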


The following result will be used in Sections \ref{s3} and \ref{sasex}.

\begin{theorem}\label{norms} [\cite{kirwin-mourao-nunes12}] Let $\lambda\in\hat K$ be an irreducible representation of $K$ of dimension $d_\lambda$ and let $\{R^{\lambda}_{ij}\}_{i,j=1,\dots,d_\lambda}$ be its matrix entries. Then the norms
\[
||R^\lambda_{ij}(xe^{\tau u})\beta_{\tau,h}(Y)\otimes \sqrt{\Omega_{\tau,h}}||_{{\mathcal H_{\tau,h}}},
\]
are independent of $i,j$ and of $\tau_1$. Moreover, these norms have a continuous limit as $\tau_2\to 0$, given by
\[
||R^\lambda_{ij}(x)\otimes \sqrt{dx}||_{{\mathcal H_{0,h}}}= ||R^\lambda_{ij}(x)||_{L^2(K,dx)} = d_\lambda^{-\frac12}.
\]
\end{theorem}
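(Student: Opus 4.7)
My strategy is to write the squared norm as a single explicit integral over $K\times\mathfrak{k}$ and then analyze separately (i) the independence claims and (ii) the $\tau_2\to 0^+$ limit. Combining the Corollary following Lemma~\ref{BKSnorm} with $|\beta_{\tau,h}(Y)|^2 = e^{-\kappa(Y)}\pi^{-n/2}$, one obtains
\[
\bigl\|R^\lambda_{ij}(xe^{\tau u})\beta_{\tau,h}\otimes\sqrt{\Omega_{\tau,h}}\bigr\|^2 = \pi^{-n/2}\tau_2^{n/2}\int_{K\times\mathfrak{k}}\bigl|R^\lambda_{ij}(xe^{\tau u(Y)})\bigr|^2\,e^{-\kappa(Y)}\,\eta(\tau_2 u(Y))\sqrt{\det H(Y)}\,dx\,dY.
\]

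For the independence assertions, I would carry out the $K$-integral first. Fix $Y$ and set $g = e^{\tau u(Y)}\in K_{\mathbb{C}}$. The representation property $R^\lambda(xg) = R^\lambda(x)R^\lambda(g)$ combined with Schur orthogonality on $K$ gives
\[
\int_K|R^\lambda_{ij}(xg)|^2\,dx \;=\; \frac{1}{d_\lambda}\bigl[R^\lambda(g)^* R^\lambda(g)\bigr]_{jj},
\]
and since $R^\lambda_*(u(Y))$ is skew-Hermitian a direct calculation yields $R^\lambda(e^{\tau u(Y)})^* R^\lambda(e^{\tau u(Y)}) = e^{2i\tau_2 R^\lambda_*(u(Y))}$, manifestly independent of $\tau_1$. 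To dispose of the residual $(i,j)$-dependence, I would invoke $Ad$-equivariance $u(Ad_k Y) = Ad_k u(Y)$ (a consequence of $Ad$-invariance of $h$), together with $Ad$-invariance of $\kappa$, $\eta(\tau_2 u)$, $\det H$ and the Haar measure $dY$, to symmetrize the remaining matrix entry $[e^{2i\tau_2 R^\lambda_*(u(Y))}]_{jj}$ by $K$-averaging; by Schur's lemma the average equals $\frac{1}{d_\lambda}\chi_\lambda(e^{2i\tau_2 u(Y)})$, so that
\[
\|\cdot\|^2 \;=\; \frac{\pi^{-n/2}\tau_2^{n/2}}{d_\lambda^2}\int_\mathfrak{k}\chi_\lambda\bigl(e^{2i\tau_2 u(Y)}\bigr)\,e^{-\kappa(Y)}\,\eta(\tau_2 u(Y))\sqrt{\det H(Y)}\,dY,
\]
an expression manifestly independent of $i$, $j$, and $\tau_1$.

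For the continuous limit as $\tau_2\to 0^+$, the hypothesis $h\in\mathrm{Conv}(K)$ is the essential input: the uniform positive lower bound on the Hessian of $h$ forces $\kappa(Y) = 2\tau_2(B(Y,u(Y)) - h(Y))$ to dominate a quadratic in $Y$ uniformly in $\tau_2$, furnishing the Gaussian envelope needed for a dominated-convergence argument. I would then introduce a Laplace-type rescaling concentrating the integral near the critical point of $\phi(Y) = B(Y,u(Y)) - h(Y)$; in the rescaled variable the pointwise limits $\chi_\lambda(e^{2i\tau_2 u(Y)})\to d_\lambda$ and $\eta(\tau_2 u(Y))\to 1$ take effect while the Jacobian of the rescaling absorbs the prefactor $\tau_2^{n/2}$. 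What remains is a Gaussian integral against $\sqrt{\det H}$ evaluated at the critical point, which collapses to a constant that cancels all $h$-dependence and leaves the limit $1/d_\lambda$, matching the Schr\"odinger norm $\|R^\lambda_{ij}\|^2_{L^2(K,dx)}$. I expect the principal obstacle to lie precisely in this last step: establishing the uniform asymptotics for a general $h\in\mathrm{Conv}(K)$, with enough control to pass to the limit both in the Gaussian bulk (where the quadratic approximation of $\phi$ at its minimum governs the leading behaviour) and in the tails (where only the convexity hypothesis prevents loss of mass).
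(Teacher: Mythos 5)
The paper itself does not prove Theorem \ref{norms}; it imports it from \cite{kirwin-mourao-nunes12}, so there is no internal proof to compare against. That said, the first half of your argument is correct and essentially complete: the reduction of the squared norm to the integral over $K\times\mathfrak k$ (via Lemma \ref{BKSnorm} and $|\beta_{\tau,h}|^2=e^{-\kappa}\pi^{-n/2}$), the Schur-orthogonality step giving $\frac{1}{d_\lambda}[R^\lambda(g)^*R^\lambda(g)]_{jj}$, the identity $R^\lambda(e^{\tau u})^*R^\lambda(e^{\tau u})=R^\lambda(e^{2i\tau_2u})$ for skew-Hermitian $R^\lambda_*(u)$ (which kills $\tau_1$ and $i$), and the $Ad$-averaging that replaces $[\,\cdot\,]_{jj}$ by $\frac{1}{d_\lambda}\chi_\lambda$ are exactly the manipulations the paper performs in Section \ref{sasex} to arrive at (\ref{alambda}); your final character formula agrees with $a_\lambda(\tau_2)^2/d_\lambda$ there.

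The gap is in the $\tau_2\to0^+$ limit, and it is precisely where you predicted trouble, but the mechanism you propose is the wrong one. As $\tau_2\to0$ the weight $e^{-2\tau_2\phi}$, $\phi(Y):=B(Y,u(Y))-h(Y)$, \emph{flattens} rather than concentrates; localization at the critical point of $\phi$ is the $\tau_2\to+\infty$ regime treated in Section \ref{sasex}, not this one. After the compensating rescaling that absorbs $\tau_2^{n/2}$, the functions $\phi$, $H$ and $u$ are evaluated at points escaping to infinity, where they are not governed by their $2$-jets at the minimum of $\phi$, so the asserted collapse to ``a Gaussian integral against $\sqrt{\det H}$ evaluated at the critical point, which cancels all $h$-dependence'' is unjustified for non-quadratic $h$. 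Concretely, take $K=S^1$, $\lambda=0$, $h(y)=\frac{y^2}{2}+\frac{y^4}{4}\in\operatorname{Conv}(S^1)$; then $\phi(y)=\frac{y^2}{2}+\frac{3y^4}{4}$, $\sqrt{H}=\sqrt{1+3y^2}$, the mass of $e^{-2\tau_2\phi}$ lives at the scale $|y|\sim\tau_2^{-1/4}$ (not $\tau_2^{-1/2}$) where $\sqrt H\approx\sqrt3\,|y|$, and an elementary computation gives
\begin{equation*}
\sqrt{\tfrac{\tau_2}{\pi}}\int_{\mathbb R}e^{-2\tau_2\phi(y)}\sqrt{H(y)}\,dy\ \longrightarrow\ \tfrac{1}{\sqrt2},
\end{equation*}
not $1=d_0^{-1}$. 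Thus the route you sketch cannot produce the claimed limit; since your integral formula is forced by the conventions of this paper (and coincides with (\ref{alambda})), whatever proof of the $\tau_2\to0$ statement is intended in \cite{kirwin-mourao-nunes12} must rest on normalizations or structure not visible in your reduction, and this discrepancy needs to be resolved before the second half of the theorem can be considered proved.
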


\section{The Mackey-Stone-Von Neumann theorem and quantization of $T^{*}K$}
\label{s3}

In the present section we will show how the Mackey-Stone-Von Neumann theorem helps in establishing a representation-theoretic proof of the unitarity of the CST.

\subsection{Covariant pairs and the Mackey theorem}
\label{s31}

Let $C^{0}(K)$ be the commutative $C^{\ast}$-algebra of continuous functions on $K$. A \emph{covariant pair} $(R,\gamma)$ for $(K,C^{0}(K))$ is a unitary representation $R$ of $K$ on an Hilbert space $\mathcal{H}$ and a $^{\ast}$-representation $\gamma$ of $C^0(K)$ on $\mathcal{H}$ such that
\[
R(x)\gamma(f)R^{\dagger}(x)=\gamma(x\cdot f)\ ,
\]
where $(x\cdot f)(x_{1})=f(x^{-1}x_{1})$, $x\in K, f\in C^0(K)$. The \emph{standard covariant pair} $(R^{st},\gamma^{st})$ for $(K,C^{0}(K))$ is given by the standard action on $\mathcal{H}^{st}:=L^{2}(K,dx)$, which is
\begin{align*}
(R^{st}(x_{1})\psi)(x)  &  =\psi(x_{1}^{-1}x)\\
\gamma^{st}(f)\psi(x)  &  =f(x)\psi(x),
\end{align*}
$x\in K, f\in C^0(K), \psi\in L^2(K,dx)$. (These notions are due to Mackey \cite{Mackey}.) The following theorem, the Mackey extension of the Stone--Von Neumann uniqueness result for the Heisenberg group, is the fundamental result lying at the root of our construction.

\begin{theorem}\cite{Mackey,Rosenberg04}
\label{thmac}
Any covariant pair $(R,\gamma)$ for $(K,C^{0}(K))$ is unitarily equivalent to a direct sum of at most countably many copies of the standard covariant pair.
\end{theorem}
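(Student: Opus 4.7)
The plan is to reduce the claim to Mackey's imprimitivity theorem for the transitive left action of $K$ on itself. The first step is to convert $\gamma$ into a projection-valued measure. Since $C^{0}(K)$ is a unital commutative $C^{\ast}$-algebra whose Gelfand spectrum is $K$, the spectral theorem associates to the $\ast$-representation $\gamma$ a unique regular projection-valued measure $E$ on the Borel $\sigma$-algebra of $K$ such that $\gamma(f)=\int_{K}f(x)\,dE(x)$ for every $f\in C^{0}(K)$. Substituting this representation into the covariance identity $R(x_{1})\gamma(f)R^{\dagger}(x_{1})=\gamma(x_{1}\cdot f)$ and invoking uniqueness of $E$ yields
\[
R(x_{1})\,E(A)\,R^{\dagger}(x_{1})=E(x_{1}A)
\]
for every Borel set $A\subseteq K$. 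In Mackey's language, $(R,E)$ is then a \emph{system of imprimitivity} based on $K$ for the (transitive) left-translation action of $K$ on itself.

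Next, I would invoke Mackey's imprimitivity theorem for the homogeneous space $K\cong K/\{e\}$. That theorem classifies systems of imprimitivity for this action, up to unitary equivalence, by unitary representations of the isotropy subgroup at $e$; but here the isotropy is trivial, so its unitary representations are simply Hilbert spaces $V$ (with multiplicity space playing the role of the representation). The associated induced system of imprimitivity is
\[
\mathcal{H}\cong L^{2}(K,dx)\otimes V,
\]
with $R$ acting by left translation on the first tensor factor and $E(A)$ acting as multiplication by the characteristic function $\chi_{A}$ on the first factor. Choosing an orthonormal basis of $V$ and assuming separability of $\mathcal{H}$ exhibits $(R,\gamma)$ as a direct sum of at most countably many copies of the standard covariant pair $(R^{st},\gamma^{st})$.

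The main obstacle, of course, is the imprimitivity theorem itself. A self-contained proof would build the intertwiner as follows: pick a vector $\psi$ and localize it via $E$ on a small neighborhood of $e$, transport it by $R(x)$ to produce ``local pieces'' over all of $K$ (this uses the covariance $R(x)E(A)R^{\dagger}(x)=E(xA)$ to guarantee that the $E$-supports fit together), define $V$ as the multiplicity space of $E$, and then use Fubini together with the uniqueness of Haar measure to verify that the resulting map $\mathcal{H}\to L^{2}(K,dx)\otimes V$ is isometric, surjective, and intertwines both $R$ and $\gamma$. The only delicate point is the measurability of the section used for the localization. In view of the paper's explicit citations \cite{Mackey,Rosenberg04}, the cleanest presentation is to appeal to those references for this classical result once the system-of-imprimitivity structure above has been identified.
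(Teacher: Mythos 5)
Your argument is correct and is essentially the canonical one: the paper offers no proof of this statement, quoting it directly from \cite{Mackey,Rosenberg04}, and your route --- the spectral theorem for the unital commutative $C^{\ast}$-algebra $C^{0}(K)$ producing a projection-valued measure, the covariance identity upgrading $(R,E)$ to a transitive system of imprimitivity on $K\cong K/\{e\}$, and the imprimitivity theorem with trivial stabilizer yielding $L^{2}(K,dx)\otimes V$ --- is exactly the argument presented in those references. The only points worth making explicit are that $\gamma$ must be taken non-degenerate, so that $E(K)=I$ (otherwise the zero representation of $C^{0}(K)$ paired with any $R$ is a counterexample), and that the countability of the direct sum uses separability of $\mathcal{H}$; both hypotheses are implicit in the paper's statement.
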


\noindent Note that the Mackey theorem is valid for locally compact groups while we are considering only compact groups.

We define a {\it double covariant pair} for $(K\times K, C^0(K))$ to be a pair $(\widetilde R, \gamma)$, where $\widetilde{R}$ is a unitary representation of $K\times K$ on a Hilbert space ${\mathcal H}$ and $\gamma$ is a $*$-representation of $C^0(K)$ on ${\mathcal H}$, such that
\begin{equation}
\label{dcpair}\widetilde{R}(x_{1},x_{2})\gamma(f)\widetilde{R}^{\dagger}(x_{1},x_{2})=\gamma((x_{1},x_{2})\cdot f)\ ,
\end{equation}
where
\[
((x_{1},x_{2})\cdot f)(x)=f(x_{1}^{-1}xx_{2}),
\]
$x_1,x_2,x\in K, f\in C^0(K).$

Let the {\it standard double covariant pair} $(\widetilde{R}^{st},\gamma^{st})$ for $(K\times K, C^0(K))$ be given by
\begin{align*}
\mathcal{H}^{st}  &  =L^{2}(K,dx)\\
(\widetilde{R}^{st}(x_{1},x_{2})\psi)(x)  &  =\psi(x_{1}^{-1}xx_{2})\\
\gamma^{st}(f)\psi(x)  &  =f(x)\psi(x),
\end{align*}
where $x_1,x_2,x\in K, f\in C^0(K), \psi\in L^2(K,dx)$.

\begin{remark}
Note that a covariant pair for $(K\times K,C^0(K\times K))$ is different from a double covariant pair for $(K\times K,C^0(K))$.
\end{remark}

The following is a direct consequences of the Mackey theorem and of the standard representation.

\begin{corollary}
\label{cmac}
For a compact group $K$, any covariant pair $(R,\gamma)$ for $(K,C^0(K))$ has an extension to a double covariant pair $(\widetilde{R},\gamma)$ for $(K\times K, C^0(K))$ which is equivalent to the direct sum of at most countable many copies of the standard double covariant pair.
\end{corollary}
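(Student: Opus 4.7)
The plan is to reduce to the standard model via Theorem \ref{thmac} and then transport the obvious extension back. By Mackey's theorem, there exists a countable index set $I$ and a unitary $U : \mathcal{H} \to \bigoplus_{i\in I} L^2(K,dx)$ such that
\[
U\, R(x)\, U^{-1} = \bigoplus_{i\in I} R^{st}(x), \qquad U\, \gamma(f)\, U^{-1} = \bigoplus_{i\in I} \gamma^{st}(f),
\]
for all $x\in K$ and $f\in C^0(K)$. The strategy is to build the extension $\widetilde R$ on $\mathcal H$ by pulling back the manifestly available right-$K$ action on each summand $L^2(K,dx)$.

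Concretely, on the model side I would set, for each $(x_1,x_2)\in K\times K$,
\[
\widetilde R_0(x_1,x_2) := \bigoplus_{i\in I} \widetilde R^{st}(x_1,x_2),
\]
which is clearly a unitary representation of $K\times K$ on $\bigoplus_{i\in I} L^2(K,dx)$ extending $\bigoplus_{i} R^{st}$. Then I would transport this back and define $\widetilde R(x_1,x_2) := U^{-1} \widetilde R_0(x_1,x_2)\, U$. By construction, $\widetilde R$ is a unitary representation of $K\times K$ on $\mathcal H$ and $\widetilde R(x_1,e) = U^{-1}(\bigoplus_i R^{st}(x_1))U = R(x_1)$, so $\widetilde R$ extends $R$. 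The pair $(\widetilde R,\gamma)$ is then, by design, unitarily equivalent (via $U$) to the direct sum of standard double covariant pairs, which gives the second assertion.

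The only thing left is to check that $(\widetilde R,\gamma)$ actually satisfies the double covariance identity (\ref{dcpair}). Conjugating by $U$, this reduces to verifying
\[
\widetilde R^{st}(x_1,x_2)\,\gamma^{st}(f)\,\widetilde R^{st}(x_1,x_2)^{\dagger} = \gamma^{st}\!\left((x_1,x_2)\cdot f\right)
\]
on a single copy of $L^2(K,dx)$. This is a direct computation: applying both sides to $\psi\in L^2(K,dx)$ and evaluating at $x\in K$, the left side becomes $f(x_1^{-1}xx_2)\,\psi(x_1^{-1}(x_1 x x_2^{-1})x_2) = f(x_1^{-1}xx_2)\,\psi(x)$, which matches the definition of $(x_1,x_2)\cdot f$.

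The main (minor) obstacle is simply keeping track of the adjoints in the standard double covariance computation; once that is done, everything else is formal, since Mackey's theorem does all the substantive work. Note that the argument makes no use of irreducibility of the input pair $(R,\gamma)$, and the countability of the multiplicity index follows from separability of $L^2(K,dx)$ together with the Mackey decomposition.
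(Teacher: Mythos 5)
Your proof is correct and follows essentially the same route as the paper: apply Mackey's theorem to decompose $\mathcal{H}$ into countably many copies of $L^2(K,dx)$ carrying the standard $(K\times\{e\},C^0(K))$ action, and then extend by the standard right action of $\{e\}\times K$ on each summand. You simply spell out the transport by $U$ and the verification of (\ref{dcpair}), which the paper leaves implicit.
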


\begin{proof}
By Mackey's theorem the Hilbert space ${\mathcal H}$ for the covariant pair for $(K,C^0(K))$ decomposes, up to isomorphism, into at most countably many copies of $L^2(K,dx)$ with the standard action of $(K\times \{e\}, C^0(K))$. One can then define a double covariant pair by taking the direct sum of the standard action of $\{e\}\times K$ on each $L^2(K,dx)$ summand.
\end{proof}

\subsection{Covariant pairs versus Heisenberg evolution in complex time}
\label{s32b}

Let us return now to the Hilbert spaces $\mathcal{H}_{\tau,h}$ of $\mathcal{P}_{\tau,h}$-polarized sections (\ref{hilspth}). Geometric quantization defines the representation of observables that preserve the polarization. Recall that, for translation invariant polarizations on a symplectic vector space, geometric quantization defines an irreducible representation of the Heisenberg group on the Hilbert space of polarized sections, which leads to unitary equivalence of the corresponding quantizations via the Stone--Von Neumann uniqueness theorem.

The situation here is not quite the same because geometric quantization does not define a covariant pair for every $\mathcal{H}_{\tau,h}$, as the functions $f\in C^{\infty}(K)$ preserve only the Schr\"{o}dinger polarization. There are, however, several facts which bring this case very close to the former, with the additional bonus of allowing for the study of the unitary equivalence of quantizations for the infinite-dimensional family of polarizations $\overline{{\mathcal{T}}}$.

The first fact is that for every $(\tau,h),$ geometric quantization defines a representation $\widetilde{R}_{\tau,h}$ of $K\times K$ of the form (up to isomorphism)
\begin{equation}\label{deco}
\bigoplus_{\lambda\in \hat K} V_{\lambda\otimes \lambda^*} .
\end{equation}
Indeed, as we have shown in \cite{kirwin-mourao-nunes12}, the Hamiltonian functions generating the left and the right actions of $K$ on $K_{{\mathbb{C}}}$ preserve all polarizations $\mathcal{P}_{\tau,h}$ and the following result holds. Let $\{\hat{y}^{j},{\hat{\tilde{y}}}^{j}\}_{j=1,\dots,n}$ be the Kostant--Souriau prequantum operators corresponding to the coordinate functions $\{y^{j},\tilde{y}^{j}\}_{j=1,\dots,n}$. Recall the following results (Theorems 6.6 and 6.7 of \cite{kirwin-mourao-nunes12}).

\begin{theorem}\cite{kirwin-mourao-nunes12}
\label{thpreserves}
Let $\tau\in{\mathbb{C}}^{+}\cup{\mathbb{R}}$.
\begin{enumerate}
\item The action of $K\times K$ on ${\mathcal{H}}_{{\tau,h}}$ is generated by the operators $\{\hat{\tilde{y}}^{j},\hat{y}^{j}\}_{j=1,\dots,n}$, where the operators $\{\hat{y}^{j}\}_{j=1,\dots,n}$ generate the right $K$ action and the operators $\{\hat{\tilde{y}}^{j}\}_{j=1,\dots,n}$ generate the left $K$ action. Denoting this action by $\widetilde{R}_{\tau,h}$ we have
    \begin{equation}\label{repith}
    \widetilde{R}_{\tau,h}(x_{1},x_{2})s(x,Y)=s(x_{1}^{-1}xx_{2},Ad_{x_{2}^{-1}}Y)\ ,
    \end{equation}
and $\widetilde{R}_{\tau,h}$ satisfies (\ref{deco}).

\item The map $e^{-i\tau\hat{h}}$ in (\ref{thraywf}) intertwines the standard representation $\widetilde{R}^{st}$ with the representation $\widetilde{R}_{\tau,h}$. It is unitary if and only if $\tau\in{{\mathbb{R}}}$.
\end{enumerate}
\end{theorem}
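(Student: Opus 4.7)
The plan is to exploit the $K\times K$-invariance of $h\in\operatorname{Conv}(K)$ throughout. Since $h$ is $\mathrm{Ad}$-invariant and depends only on the fiber variable $Y$, it Poisson-commutes with the moment map components $y^j,\tilde y^j$, so $X_h$ commutes with $X_{y^j}$ and $X_{\tilde y^j}$. Because $\mathcal{P}_{\tau,h}=e^{\bar\tau\mathcal{L}_{X_h}}\mathcal{P}_{Sch}$ (Theorem \ref{thmomegatau}) and $X_{y^j},X_{\tilde y^j}$ preserve $\mathcal{P}_{Sch}$, they also preserve $\mathcal{P}_{\tau,h}$. Consequently, the Kostant--Souriau half-form-corrected operators $\hat y^j,\hat{\tilde y}^j$ are well-defined on $\mathcal{H}_{\tau,h}$, and they commute with $\hat h$.

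For part 1, I would verify (\ref{repith}) by applying $\hat y^j=-i\nabla_{X_{y^j}}+y^j-i\mathcal{L}_{X_{y^j}}$ to a generic polarized section $s=f(xe^{\tau u})\beta_{\tau,h}(Y)\otimes\sqrt{\Omega_{\tau,h}}$ and observing that the $Y$-dependence factors out: the prequantum contribution on $\beta_{\tau,h}(Y)$ cancels against the half-form contribution on $\sqrt{\Omega_{\tau,h}}$ (using that $\Omega_{\tau,h}$ is left-$K_{\mathbb{C}}$-invariant, hence annihilated by $\mathcal{L}_{X_{y^j}}$ up to the divergence produced by the prequantum piece), leaving pure differentiation of $f(xe^{\tau u})$ by the left-invariant vector field $X_j$ on $K$; an analogous computation with a right-invariant frame treats $\hat{\tilde y}^j$. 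Exponentiation gives the pullback action (\ref{repith}). The decomposition (\ref{deco}) then follows from Peter--Weyl: by Theorem \ref{tautwopositive}, $\mathcal{H}_{\tau,h}$ is identified with a weighted holomorphic $L^2$-space on $K_{\mathbb{C}}$, and the matrix elements $R^\lambda_{ij}(xe^{\tau u})\beta_{\tau,h}(Y)\otimes\sqrt{\Omega_{\tau,h}}$ form an orthogonal basis whose $\lambda$-block is preserved by $K\times K$ and realizes $V_{\lambda^*}\otimes V_\lambda\cong V_{\lambda\otimes\lambda^*}$.

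For part 2, the intertwining is immediate from Theorem \ref{ththrawf}: evaluating $\widetilde R_{\tau,h}(x_1,x_2)\circ e^{-i\tau\hat h}$ on $\psi(x)\otimes\sqrt{dx}$ gives $\psi(x_1^{-1}xx_2 e^{\tau u(\mathrm{Ad}_{x_2^{-1}}Y)})\beta_{\tau,h}(\mathrm{Ad}_{x_2^{-1}}Y)\otimes\sqrt{\Omega_{\tau,h}}$, and using $\mathrm{Ad}$-invariance of $u$ and $h$ (so that $x_2 e^{\tau u(\mathrm{Ad}_{x_2^{-1}}Y)}=e^{\tau u(Y)}x_2$ and the factors depending only on $Y$ are unchanged) this equals $e^{-i\tau\hat h}\circ\widetilde R^{st}(x_1,x_2)(\psi\otimes\sqrt{dx})$. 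For unitarity, since $\hat h$ commutes with the $K\times K$ generators it acts as a scalar on each isotypic component; Theorem \ref{norms} then says that the norm of $R^\lambda_{ij}(xe^{\tau u})\beta_{\tau,h}\otimes\sqrt{\Omega_{\tau,h}}$ is independent of $i,j,\tau_1$ and equals $d_\lambda^{-1/2}$ at $\tau_2=0$, giving blockwise isometry precisely when $\tau\in\mathbb{R}$. For $\tau_2>0$ the norm strictly changes through the factor $\tau_2^{n/2}\eta(\tau_2 u(Y))\sqrt{\det H}$ of Lemma \ref{BKSnorm}, so unitarity fails. The main obstacle is the half-form bookkeeping in part 1: one must confirm that $\mathcal{L}_{X_{y^j}}\sqrt{\Omega_{\tau,h}}$ combines with $-i\nabla_{X_{y^j}}+y^j$ to leave no zeroth-order residue, which requires unpacking the frame (\ref{tauforms}) and its invariance under right translations; once this is settled, exponentiation and Peter--Weyl proceed mechanically.
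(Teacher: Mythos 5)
This theorem is imported verbatim from \cite{kirwin-mourao-nunes12} (Theorems 6.6 and 6.7 there); the present paper offers no proof of it, so there is nothing in-text to compare you against. Your outline is nonetheless the natural one and matches the route taken in the cited source: show $X_{y^j},X_{\tilde y^j}$ Poisson-commute with $h$ and hence preserve $\mathcal{P}_{\tau,h}=e^{\bar\tau\mathcal{L}_{X_h}}\mathcal{P}_{Sch}$, compute the Kostant--Souriau operators on the explicit polarized sections of Theorem \ref{tautwopositive}, exponentiate, apply Peter--Weyl, and get the intertwining from Theorem \ref{ththrawf}. Two details need repair. First, $\mathcal{L}_{X_{y^j}}\sqrt{\Omega_{\tau,h}}=0$ does \emph{not} follow from left-$K_{\mathbb{C}}$-invariance of $\Omega_{\tau,h}$: the $X_{y^j}$ generate the \emph{right} action, under which each $\Omega^j_{\tau,h}$ transforms in the (co)adjoint representation; the top wedge is annihilated only because $K$ is unimodular, $\det Ad_{x_2}=1$. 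In fact no cancellation against the prequantum piece is needed: $\beta_{\tau,h}$, $\sqrt{\Omega_{\tau,h}}$ and the connection term are each separately invariant under the flow (the flow acts on $Y$ by $Ad$ and all these objects are $Ad$-invariant), so only $f(xe^{\tau u})$ is differentiated. Second, in part 2 the ``only if'' direction is under-argued: ``the norm strictly changes through the factor $\tau_2^{n/2}\eta(\tau_2 u)\sqrt{\det H}$'' does not by itself rule out that the resulting integral $a_\lambda(\tau_2)$ accidentally equals $1$ for all $\lambda$ at some $\tau_2>0$; you need, e.g., the strict monotonicity of $\tau_2\mapsto\|e^{\tau_2\hat h}s\|$ or the large-$\tau_2$ asymptotics $a_\lambda(\tau_2)^2\sim e^{2\tau_2 h_\lambda}$ of Section \ref{sasex}, which show the blockwise scaling genuinely depends on $\lambda$ and $\tau_2$. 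With those two points fixed, the proposal is a faithful reconstruction of the standard proof.
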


Since the representations $V_{\lambda\otimes\lambda^{\ast}}$ of $K\times K$ are irreducible, from Schur's lemma we conclude that a unitary map $U$ intertwining $\widetilde{R}^{st}$ on $L^{2}(K,dx)$ with $\widetilde{R}_{\tau,h}$ is of the form
\begin{equation}
U_{\alpha}=\bigoplus_{\lambda\in\widehat{K}}\alpha(\lambda)\varphi_{\lambda
\otimes\lambda^{\ast}},\label{decointal}
\end{equation}
for any $\alpha\in Map(\widehat{K},U(1))$, where $\varphi_{\lambda \otimes\lambda^{\ast}}$ is a unitary intertwining operator between the subspace $V^\lambda_{0} =\{{tr}(A\lambda(x))\otimes \sqrt{dx},\,\,  A\in End(V_\lambda)\} \subset L^2(K,dx)\otimes \sqrt{dx}$ and the corresponding subspace
\begin{eqnarray}\nonumber
V_{\tau,h}^\lambda &=&\{ \operatorname{tr}(A\lambda(xe^{\tau u}))\beta_{\tau,h}(Y)\otimes \sqrt{\Omega_{\tau,h}},\,\,  A\in End(V_\lambda)\}=\\ \nonumber &=& e^{-i\tau \hat h} \, V_0^\lambda \subset {\mathcal H}_{\tau,h}.
\end{eqnarray}
The general extension of $\widetilde{R}_{\tau,h}$ to a covariant pair is then given by
\begin{equation}
\gamma_{\tau,h}(f)=U_{\alpha}\circ\gamma^{st}(f)\circ U_{\alpha}^{-1}\ ,\,\,\,f\in C^0(K).\label{gamaal}
\end{equation}

The second fact is that the ambiguity in defining the covariant pairs can be further restricted because all polarizations are in Thiemann rays starting at the vertical polarization (\ref{thray}) so that geometric quantization can tell us more regarding $\gamma_{\tau,h}(f),\ f\in C^{0}(K)$.

Let ${\mathcal{A}}$ denote the algebra generated by holomorphic functions on $K_{\mathbb C}$ of the form $f(g)=tr(\lambda(g)A)$, where $\lambda$ denotes an irreducible finite-dimensional representation of $K_{{\mathbb{C}}}$ on the vector space $V_{\lambda}$ and $A\in\mathrm{End}(V_{\lambda})$. We denote by $\mathcal{A}_{\tau,h}$ the pullback of ${\mathcal{A}}$ to $T^{\ast}K$ under $\Psi_{\tau,h}$. Since the symplectic form $\omega$ is of type $(1,1)$ with respect to$J_{\tau,h}$, these algebras are all abelian, Poisson, and multiplicative subalgebras of $C^{\infty}(T^{\ast}K,{{\mathbb{C}}}),$ and we have
\begin{equation}
\mathcal{A}_{\tau,h}=\Psi_{\tau,h}^* {\mathcal A}  =\left\{  e^{\tau X_{h}}f,\ f\in{\mathcal{A}}_0\right\}  =e^{\tau X_{h}}\mathcal{A}_{0}\ ,\label{ath0}
\end{equation}
where $\mathcal{A}_{0}$ is the algebra of functions on $K$ generated by matrix elements of irreducible representations, pulled back to $T^{\ast}K$. Recall that the expressions $e^{\tau X_{h}}f,\ f\in{\mathcal{A}}_0$ can be interpreted literally as convergent power series in $\tau$ \cite{kirwin-mourao-nunes12} and that $\mathcal{A}_{0}$ is dense in $C^{0}(K)$.

\begin{remark}
\label{rnotstar} Notice that $e^{\tau X_{h}}$ in (\ref{ath0}) establishes an algebraic and Poisson isomorphism between $\mathcal{A}_{0}$ and $\mathcal{A}_{\tau,h}$ but \emph{not} a $^{\ast}$-isomorphism. In fact $\mathcal{A}_{0}$ is a $^{\ast}$-subalgebra of $C^{\omega}(T^{\ast}K,{{\mathbb{C}}})$ and none of the algebras $\mathcal{A}_{\tau,h}$, for $\tau\in{{\mathbb{C}}}^{+}$, is a $^{\ast}$-subalgebra.
\end{remark}

\begin{proposition}
\label{ppxf}
\[
\overline{\mathcal{P}_{\tau,h}}=\langle X_{f},\ f\in\mathcal{A}_{\tau,h}\rangle_{{\mathbb{C}}}
\]
\end{proposition}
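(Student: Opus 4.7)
The plan is to prove both inclusions separately, with the easy direction coming from the type decomposition forced by the K\"ahler condition, and the harder direction coming from the fact that matrix coefficients of finite-dimensional representations of $K_{\mathbb{C}}$ form a sufficiently rich algebra to span every $J_{\tau,h}$-holomorphic cotangent space.

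For the inclusion $\langle X_{f},\ f\in\mathcal{A}_{\tau,h}\rangle_{{\mathbb{C}}} \subseteq \overline{\mathcal{P}_{\tau,h}}$, I would first note that any $f \in \mathcal{A}_{\tau,h}$ is, by construction, $J_{\tau,h}$-holomorphic, so $df = \partial_{\tau,h} f$ is a $(1,0)$-form with respect to $J_{\tau,h}$. Theorem \ref{kahler} tells us that $\omega$ is of type $(1,1)$ with respect to $J_{\tau,h}$. Writing $X_f = X_f^{(1,0)} + X_f^{(0,1)}$, the identity $\iota_{X_f}\omega = df$ together with the fact that contracting $\omega$ with a $(1,0)$-vector produces a $(0,1)$-form (and vice versa) forces $X_f^{(1,0)} = 0$, so $X_f \in \overline{\mathcal{P}_{\tau,h}}$.

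For the reverse inclusion, I would argue pointwise, using that $\overline{\mathcal{P}_{\tau,h}}$ has complex rank $n = \dim K$ and that $\omega$ induces a fiberwise $\mathbb{C}$-linear isomorphism $\overline{\mathcal{P}_{\tau,h}}_p \to (\mathcal{P}_{\tau,h})_p^{*}$ via $X \mapsto \iota_X\omega$. Thus it suffices to show that at every $p \in T^{*}K$, the set $\{ d_p f : f \in \mathcal{A}_{\tau,h}\}$ spans the $(1,0)$-cotangent space $(\mathcal{P}_{\tau,h})_p^{*}$. Via the biholomorphism $\Psi_{\tau,h} : (T^{*}K, J_{\tau,h}) \to K_{\mathbb{C}}$, this reduces to showing that the differentials of the functions in $\mathcal{A}$ span the holomorphic cotangent space of $K_{\mathbb{C}}$ at every point. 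Since $K_{\mathbb{C}}$ admits a faithful finite-dimensional holomorphic representation $\rho : K_{\mathbb{C}} \hookrightarrow GL(V)$, the matrix coefficients of $\rho$ are elements of $\mathcal{A}$ whose differentials realize the embedding of $K_{\mathbb{C}}$ as a smooth closed complex submanifold of $\mathrm{End}(V)$; the differentials of these coefficients therefore surject onto the holomorphic cotangent space at every point of $K_{\mathbb{C}}$.

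The main obstacle I expect is not technical but conceptual: one must pin down precisely why matrix coefficients of irreducible representations, rather than some larger class of holomorphic functions on $K_{\mathbb{C}}$, suffice for the spanning property. This is handled by the standard algebraic-group observation that $\mathcal{A}$ contains the coordinate ring of a closed embedding of $K_{\mathbb{C}}$ into affine space, and so separates tangent directions at every point. Once this is in hand, both inclusions assemble into the desired equality; the argument is uniform in $\tau \in \mathbb{C}^{+} \cup \mathbb{R}$ and $h \in \operatorname{Conv}(K)$, since $\Psi_{\tau,h}$ is a biholomorphism and the type computation uses only that $(\omega, J_{\tau,h})$ is K\"ahler.
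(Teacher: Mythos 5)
Your proof is correct, but it takes a genuinely different route from the paper's. The paper deduces the statement in one line from the Thiemann-ray description of the polarization: it starts from the (elementary) fact that $\mathcal{P}_{Sch}=\langle X_{f},\ f\in\mathcal{A}_{0}\rangle_{\mathbb{C}}$, applies Theorem \ref{thmomegatau} in the form $\overline{\mathcal{P}_{\tau,h}}=e^{\tau\mathcal{L}_{X_{h}}}\mathcal{P}_{Sch}$, and uses the identity $e^{\tau\mathcal{L}_{X_{h}}}X_{f}=X_{e^{\tau X_{h}}f}$ --- a standard symplectic fact for real time, extended to complex $\tau$ by analyticity --- together with $\mathcal{A}_{\tau,h}=e^{\tau X_{h}}\mathcal{A}_{0}$. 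You instead argue directly from the K\"ahler structure: the forward inclusion via the type decomposition (holomorphic $f$ plus $\omega$ of type $(1,1)$ forces $X_{f}$ to be of type $(0,1)$, which is exactly the argument the paper itself deploys later in Proposition \ref{pbetarep}), and the reverse inclusion via a pointwise dimension count reduced through the biholomorphism $\Psi_{\tau,h}$ to the fact that matrix coefficients of a faithful finite-dimensional representation of $K_{\mathbb{C}}$ immerse $K_{\mathbb{C}}$ into $\mathrm{End}(V)$ and hence span the holomorphic cotangent space at each point (complete reducibility of that representation places its coefficients in $\mathcal{A}$). The paper's argument is shorter given the machinery already in place and makes the identification $\mathcal{A}_{\tau,h}=e^{\tau X_{h}}\mathcal{A}_{0}$ do the work, at the cost of leaning on the somewhat delicate analytic continuation of the flow identity to complex time; your argument avoids complex-time flows entirely, uses only Theorem \ref{kahler} and the biholomorphism, and would apply verbatim to any nondegenerate $(1,1)$ polarization biholomorphic to $K_{\mathbb{C}}$. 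The only point worth stating explicitly in your write-up is that faithfulness of the representation gives injectivity of its differential (discrete kernel of a Lie group homomorphism), which is what the spanning claim actually requires; closedness of the image is not needed.
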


\begin{proof}
It is clear that $\mathcal{P}_{0}=\langle X_{f},\ f\in\mathcal{A}_{0}\rangle$. Then, from Theorem \ref{thmomegatau} we obtain
\[
\overline{\mathcal{P}_{\tau,h}}=e^{\tau {\mathcal L}_{X_{h}}}\left\langle X_{f},\ f\in\mathcal{A}_{0}\right\rangle =\left\langle X_{e^{\tau X_h}f},f\in\mathcal{A}_{0}\right\rangle =\left\langle X_{g},\ g\in\mathcal{A}_{\tau,h}\right\rangle.
\]
Here, we use $e^{\tau {\mathcal L}_{X_{h}}} X_{f}=X_{e^{\tau X_h}f}, f\in\mathcal{A}_{0}$. For real time the equality is a standard symplectic geometric fact and analyticity in $\tau$ guarantees that the equality also holds for complex time.
\end{proof}

\bigskip
The following proposition shows that $\mathcal{A}_{\tau,h}$ acts on $\mathcal{H}_{\tau,h}$.

\begin{proposition}
\label{pbetarep}
Let $f\in\mathcal{A}_{\tau,h}$. Then $f$ preserves the polarization
$\overline{\mathcal{P}_{\tau,h}}$ and the geometric prequantization of $f$, restricted to $\mathcal{H}_{\tau,h}$, defines a representation $\mu_{\tau,h}$ of $\mathcal{A}_{\tau,h}$ on $\mathcal{H}_{\tau,h}$ by
\begin{equation}
(\mu_{\tau,h}(f)s)(x,Y)=(\hat{f}s)(x,Y)=f(xe^{\tau u})\ s(x,Y).
\label{betarep}
\end{equation}
\end{proposition}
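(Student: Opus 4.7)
The plan is to use Proposition \ref{ppxf}, which yields $X_f\in\overline{\mathcal{P}_{\tau,h}}$ for every $f\in\mathcal{A}_{\tau,h}$, as the single structural input. This inclusion simultaneously controls both the polarization-preserving property and the form of the Kostant--Souriau operator on polarized sections.

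First I would dispose of polarization preservation. Because $\overline{\mathcal{P}_{\tau,h}}$ is Lagrangian, it is involutive; since $X_f$ is itself a section of it, Lie brackets of $X_f$ with sections of $\overline{\mathcal{P}_{\tau,h}}$ remain in $\overline{\mathcal{P}_{\tau,h}}$. Thus $f$ preserves the polarization in the sense required by geometric quantization, so that $\hat f$ maps $\mathcal{P}_{\tau,h}$-polarized sections to $\mathcal{P}_{\tau,h}$-polarized sections. Next, I would evaluate $\hat f$ on a typical polarized section $s=\psi\otimes\sqrt{\Omega_{\tau,h}}$ with $\psi=g(xe^{\tau u})\beta_{\tau,h}(Y)$, using the half-form-corrected Kostant--Souriau formula
\[
\hat f=(i\nabla_{X_f}+f)\otimes 1+1\otimes\mathcal{L}_{X_f}.
\]
The covariant-constancy of $s$ along $\overline{\mathcal{P}_{\tau,h}}$, together with $X_f\in\overline{\mathcal{P}_{\tau,h}}$, forces $\nabla_{X_f}\psi=0$. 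For the half-form factor, $\Omega_{\tau,h}$ is a holomorphic $(n,0)$-form with respect to $J_{\tau,h}$ while $X_f$ is of type $(0,1)$; hence $\iota_{X_f}\Omega_{\tau,h}=0$ and $d\Omega_{\tau,h}=0$ for dimensional reasons, so Cartan's formula gives $\mathcal{L}_{X_f}\Omega_{\tau,h}=0$ and therefore $\mathcal{L}_{X_f}\sqrt{\Omega_{\tau,h}}=0$. Only the multiplication term survives, yielding $\hat f s=f\cdot s$, which is precisely the formula (\ref{betarep}) once one recalls that $f$ as a function on $T^*K$ is $\Psi_{\tau,h}^*f_0=f_0(xe^{\tau u})$.

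Finally, the representation property $\mu_{\tau,h}(fg)=\mu_{\tau,h}(f)\mu_{\tau,h}(g)$ is automatic since both sides act by pointwise multiplication by the same function, and the resulting section is still polarized because $\mathcal{A}_{\tau,h}$ is closed under products. The only step that requires any care is the vanishing of the half-form Lie derivative, since it depends on the identification of $\sqrt{\Omega_{\tau,h}}$ and the interpretation of its Lie derivative; however this reduces to the standard type-decomposition argument on the Kähler manifold $(T^*K,J_{\tau,h})$ and should not pose a real obstacle.
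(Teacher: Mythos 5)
Your argument is correct and follows essentially the same route as the paper: $X_f\in\Gamma(\overline{\mathcal{P}_{\tau,h}})$ gives preservation of the polarization, covariant constancy of $s$ kills the $\nabla_{X_f}\otimes 1$ term, and the vanishing of $\mathcal{L}_{X_f}$ on the holomorphic half-form leaves only multiplication by $f(xe^{\tau u})$. One small correction: $d\Omega_{\tau,h}=0$ is not purely a dimensional fact --- the $(n+1,0)$ component vanishes by dimension, but the $(n,1)$ component $\bar\partial\Omega_{\tau,h}$ vanishes because $\Omega_{\tau,h}$ is holomorphic, which is exactly the hypothesis the paper cites and which you already invoke in the same sentence.
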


\begin{proof}
The first part is a direct consequence of the fact that $X_{f}\in{\Gamma}
(\overline{\mathcal{P}_{\tau,h}})$. For the second part notice that the Hamiltonian vector field $X_f$  is of type $(0,1)$ since $f$ is holomorphic and $\omega$ is of type $(1,1)$. Then, $\nabla_{X_{f}}\otimes1\ s=0$. Also, since the half-form $\sqrt{\Omega_{\tau,h}}$ is holomorphic \cite{kirwin-mourao-nunes12} we get $1\otimes{{\mathcal{L}}}_{X_{f}}\ s=0$, for all $s\in\mathcal{H}_{\tau,h}$.
\end{proof}

Note that the non-constant functions $f\in {\mathcal A}_{\tau,h}$ act on $\mathcal{H}_{\tau,h}$ as unbounded operators.

\begin{theorem}
\label{thlifts}
The operator $e^{-i\tau\hat{h}}\ :\ \mathcal{H}_{Sch}\rightarrow\mathcal{H}_{\tau,h}$ in (\ref{thraywf}) intertwines the representation $\gamma^{st}|_{\mathcal{A}_{0}}$ of $\mathcal{A}_{0}$ with the representation $\mu_{\tau,h}$ of $\mathcal{A}_{\tau,h}=e^{\tau X_{h}}\mathcal{A}_{0}$ on $\mathcal{H}_{\tau,h}$.
\end{theorem}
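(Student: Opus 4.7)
The plan is to verify the intertwining identity
\[
e^{-i\tau\hat h}\circ\gamma^{st}(f) \;=\; \mu_{\tau,h}\bigl(e^{\tau X_h}f\bigr)\circ e^{-i\tau\hat h}
\]
by direct computation on a dense subspace, exploiting the explicit formula for $e^{-i\tau\hat h}$ given in Theorem \ref{ththrawf} and the description (\ref{betarep}) of $\mu_{\tau,h}$ as a multiplication operator by the $J_{\tau,h}$-analytic continuation. Since $\mathcal{A}_0$ is dense in $C^0(K)$, generated by matrix coefficients (which are real-analytic on $K$), it suffices to check the identity on sections of the form $s=g(x)\otimes\sqrt{dx}$ with $g$ real-analytic, where both sides are defined.

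First, I would compute the left-hand side. Writing $\gamma^{st}(f)s = (fg)(x)\otimes\sqrt{dx}$ and invoking (\ref{thraywf}), the analytic-continuation formula gives
\[
e^{-i\tau\hat h}\bigl(\gamma^{st}(f)s\bigr) \;=\; (fg)(xe^{\tau u})\,\beta_{\tau,h}(Y)\otimes\sqrt{\Omega_{\tau,h}}.
\]
Next, for the right-hand side, apply $e^{-i\tau\hat h}$ first to get $g(xe^{\tau u})\beta_{\tau,h}(Y)\otimes\sqrt{\Omega_{\tau,h}}$, and then apply the multiplication operator $\mu_{\tau,h}(e^{\tau X_h}f)$ using Proposition \ref{pbetarep}. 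The key observation is that, by the definition (\ref{cxevol}) of $\mathcal{A}_{\tau,h}$ together with the biholomorphism $\Psi_{\tau,h}$, the function $e^{\tau X_h}f$ evaluated at $(x,Y)\in T^*K$ coincides with the $J_{\tau,h}$-analytic continuation of $f|_K$ pulled back along $\Psi_{\tau,h}$, i.e.\ $(e^{\tau X_h}f)(x,Y)=f(xe^{\tau u})$. Multiplying then produces $f(xe^{\tau u})\,g(xe^{\tau u})\,\beta_{\tau,h}(Y)\otimes\sqrt{\Omega_{\tau,h}}$, which agrees with the left-hand side.

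The one subtle point — what I would flag as the main (mild) obstacle — is justifying that the computation makes sense at the level of densely defined unbounded operators. The operator $e^{-i\tau\hat h}$ is only densely defined (see the discussion around (\ref{hhat})), and the elements of $\mathcal{A}_{\tau,h}$ act as unbounded multiplication operators on $\mathcal{H}_{\tau,h}$. Both issues are handled by restricting to the common invariant dense subspace spanned by matrix elements $R^\lambda_{ij}(x)\otimes\sqrt{dx}$: multiplication by $f\in\mathcal{A}_0$ preserves this subspace (since matrix coefficients form a subalgebra by Peter–Weyl), each element is real-analytic so (\ref{thraywf}) applies, and the image lies in the analogously defined dense subspace of $\mathcal{H}_{\tau,h}$ on which $\mu_{\tau,h}(e^{\tau X_h}f)$ is defined. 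Combining these observations with the pointwise identity above, extension by linearity to $\mathcal{A}_0$ and restriction/closure onto the natural dense domain yields the stated intertwining. No further analytic subtleties appear because the analytic continuation formula in Theorem \ref{ththrawf} is exact, not merely formal, on this dense subspace.
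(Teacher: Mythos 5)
Your proposal is correct and follows essentially the same route as the paper: the paper's proof simply cites (\ref{ath0}) (which encodes your key observation that $(e^{\tau X_h}f)(x,Y)=f(xe^{\tau u})$ via the pullback under $\Psi_{\tau,h}$) together with Proposition \ref{pbetarep}, the multiplicativity of analytic continuation and the explicit formula (\ref{thraywf}) doing the rest exactly as in your computation. Your added care about dense domains for the unbounded operators is a reasonable elaboration of what the paper leaves implicit, not a divergence in method.
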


\begin{proof}
This is a simple consequence of (\ref{ath0}) and of Proposition \ref{pbetarep}.
\end{proof}

\bigskip
Given the representation $\mu_{\tau,h}$ of $\mathcal{A}_{\tau,h}$, one could try to extend it to a representation of $\mathcal{A}_{\tau,h}\oplus\overline{\mathcal{A}_{\tau,h}}$ so that it obeys the $^{\ast}$-relations
\[
\mu_{\tau,h}(\bar{f}):={\mu_{\tau,h}(f)}^{\dagger}\ ,\ f\in\mathcal{A}_{\tau,h},
\]
and then choose a factor ordering to quantize $(\mathcal{A}_{0}$, $\gamma_{\tau,h})$, in such a way that it extends to a $^{\ast}$-representation of $C^{0}(K)$ and such that the resulting pair $(\widetilde{R}_{\tau,h},\gamma_{\tau,h})$ is a double covariant pair. That is, given a function in $\mathcal{A}_{0}$ one would express it in terms of functions in $\mathcal{A}_{\tau,h}$ and of functions in $\overline{\mathcal{A}_{\tau,h}}$ and then quantize it as an operator on $\mathcal{H}_{\tau,h}$ using some factor ordering and obeying the $*$-relations.

We will, however, address the problem of quantizing $\mathcal{A}_{0}$ with respect to $\mathcal{P}_{\tau,h}$ in a way which will be explicitly compatible with the action of $K\times K$. We see from Theorem \ref{thlifts} that the operator $e^{-i\tau\hat{h}}$ intertwines the representation of $\mathcal{A} _{0}$ on $\mathcal{H}_{Sch}$ with the natural representation of the complex-time-evolved algebra of observables $\mathcal{A}_{\tau,h}=e^{\tau X_{h}}\mathcal{A}_{0}$ on the Hilbert space \footnote{This is always the case for observables which preserve a polarization and for their Hamiltonian evolution in real or complex time.} $\mathcal{H}_{\tau,h}$. As mentioned in Section \ref{intro} (see equations (\ref{heisenberg}) and (\ref{schquant})), in order to have $e^{-i\tau \hat h}$ defining instead a representation of ${\mathcal A}_0$ on ${\mathcal H}_{\tau,h}$, it is natural to  start from a representation of $\mathcal{A}_{-\tau,h}=e^{-\tau X_h} {\mathcal A}_0$ on ${\mathcal H}_{Sch}$. For this, we need to define a quantization $Q(h)$ of $h$ on ${\mathcal H}_{Sch}$ and then define the representation of ${\mathcal A}_{-\tau,h}$ on ${\mathcal H}_{Sch}$ via Heisenberg evolution, as in (\ref{heisenberg}). Even though $h$ does not preserve the vertical polarization, the $Ad$-invariance of $h$ and the properties of the momentum space polarization of \cite{Kirwin-Wu12}, ensure that there is a natural quantization $Q(h)$ of $h$ on $\mathcal{H}_{Sch}$, as we have defined in Section \ref{intro} in (\ref{schquant}).

\begin{theorem}
\label{39}
Let  $\tau_{2}>0$. The operators $e^{i\tau Q(h)}=e^{-\tau_{2}Q(h)}\cdot e^{i\tau_{1}Q(h)}$ on $\mathcal{H}_{Sch}$ are contraction operators and therefore bounded. They map square-integrable functions on $K$ to complex analytic ones.
\end{theorem}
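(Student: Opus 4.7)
The key observation is that by equation (\ref{schquant}) the operator $Q(h)$ is diagonal in the Peter--Weyl decomposition: on each isotypic component $V_{\lambda\otimes\lambda^{\ast}}\subset\mathcal{H}_{Sch}$ it acts as the scalar $h(-(\lambda+\rho))$. Consequently $e^{i\tau Q(h)}=e^{-\tau_{2}Q(h)}e^{i\tau_{1}Q(h)}$ is also diagonal, acting on $V_{\lambda\otimes\lambda^{\ast}}$ as multiplication by $e^{i\tau h(-(\lambda+\rho))}$, so its operator norm is $\sup_{\lambda\in\hat K}e^{-\tau_{2}h(-(\lambda+\rho))}$ and its spectrum is the closure of $\{e^{i\tau h(-(\lambda+\rho))}\}_{\lambda\in\hat K}$.

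To bound this supremum I would exploit the defining properties of $h\in\operatorname{Conv}(K)$. Strict convexity together with the uniform positive lower bound $c>0$ on the operator norm of the Hessian yields the quadratic minorization
$$
h(Y)\ \geq\ h_{\min}+\tfrac{c}{2}\,|Y-Y_{\ast}|^{2},\qquad Y\in\mathfrak{k},
$$
where $Y_{\ast}$ is the (necessarily unique) minimizer. Hence $h$ is bounded below by $h_{\min}$, so $\|e^{i\tau Q(h)}\|\leq e^{-\tau_{2}h_{\min}}<\infty$. After the innocuous additive normalization $h\mapsto h-h_{\min}$ (which only modifies $e^{i\tau Q(h)}$ by a scalar), one has $h(-(\lambda+\rho))\geq 0$ for every $\lambda\in\hat K$, yielding the honest contraction bound $\|e^{i\tau Q(h)}\|\leq 1$.

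For the second assertion, given $\psi\in L^{2}(K,dx)$ with Peter--Weyl expansion $\psi=\sum_{\lambda,i,j}c^{\lambda}_{ij}R^{\lambda}_{ij}$, the image is
$$
e^{i\tau Q(h)}\psi\ =\ \sum_{\lambda,i,j}c^{\lambda}_{ij}\,e^{i\tau h(-(\lambda+\rho))}R^{\lambda}_{ij}.
$$
By the quadratic minorization, the new coefficients decay like $e^{-c\tau_{2}|\lambda+\rho|^{2}/2}$ as $|\lambda|\to\infty$, i.e.\ faster than any exponential. On the other hand, each matrix element $R^{\lambda}_{ij}$ extends to a holomorphic function on $K_{\mathbb{C}}$, and via the polar decomposition $g=xe^{Y}\in K\cdot\exp(i\mathfrak{k})$ one has the elementary bound $|R^{\lambda}_{ij}(g)|\leq\|\lambda(g)\|\leq e^{\|\lambda(Y)\|_{\mathrm{op}}}\leq e^{C|\lambda+\rho|\,|Y|}$ for a constant $C$ depending only on the inner product on $\mathfrak{k}$. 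Super-exponential (Gaussian) decay in $|\lambda|$ therefore dominates the at-most-exponential growth of the matrix coefficients on every compact subset of $K_{\mathbb{C}}$, and uniform absolute convergence on compacta produces a holomorphic function on $K_{\mathbb{C}}$ whose restriction to $K$ equals $e^{i\tau Q(h)}\psi$.

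The main technical point is the contest between the decay of $|e^{i\tau h(-(\lambda+\rho))}|$ and the growth of the extended matrix coefficients on $K_{\mathbb{C}}$: it is precisely the quadratic lower bound on $h$ (as opposed to merely growth at infinity) that wins, and this is exactly where the assumption on the Hessian in $\operatorname{Conv}(K)$ is used. Everything else reduces to the spectral calculus of a diagonal self-adjoint operator on the Peter--Weyl decomposition.
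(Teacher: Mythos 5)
Your proof is correct and rests on the same two pillars as the paper's: the diagonal action of $Q(h)$ in the Peter--Weyl decomposition, and a quadratic lower bound on $h$ extracted from the $\operatorname{Conv}(K)$ hypotheses. The difference is in how the work is finished. The paper uses the minorization $h(Y)\ge c_0+B(v_0,Y)+c_2|Y|^2$ only to dominate $e^{-\tau_2 Q(h)}$ spectrally by a heat operator $e^{\alpha\Delta}$ (comparing eigenvalues on each $V_{\lambda\otimes\lambda^*}$, where $-\Delta$ has eigenvalue $(\lambda+\rho)^2-\rho^2$), and then quotes the known properties of the heat semigroup from \cite{Hall94} for both boundedness and the analyticity of the image. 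You instead carry out both estimates by hand: the operator norm is read off as $\sup_{\lambda}e^{-\tau_2 h(-(\lambda+\rho))}$, and analyticity is proved directly by pitting the Gaussian decay of the Fourier coefficients against the at-most-exponential growth of the analytically continued matrix elements on compacta of $K_{\mathbb C}$, which amounts to reproving the relevant heat-kernel fact. Your version is more self-contained and makes visible exactly where the Hessian hypothesis enters (superlinear growth of $h$ is what wins against $e^{C|\lambda+\rho|\,|Y|}$); the paper's is shorter because it can lean on the literature. One small point, which you already flag honestly: without the normalization $h\mapsto h-h_{\min}$ the operator is only bounded by $e^{-\tau_2 h_{\min}}$, not literally a contraction unless $h\ge 0$; the paper's own statement and proof carry the same imprecision, so this is not a gap in your argument.
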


\begin{proof}
Since $h$ is in $\operatorname{Conv}(K)$, it follows that $h(Y)\geq c_0 +  B(v_0,Y) + c_2 |Y|^2$ where $c_0\in{\mathbb R}$, $c_2>0$ and $v_0\in \mathfrak k$ are constants. Since $-\Delta \cdot R^\lambda_{ij}(x)=((\lambda+\rho)^2-\rho^2)R^\lambda_{ij}(x)$, for $R^\lambda_{ij}$ a matrix element of $\lambda\in \hat K$, we see that $||e^{-\tau_2 Q(h)}f||_{L^2(K,dx)}\leq|| e^{\alpha \Delta} f||_{L^2(K,dx)}$, for some constant $\alpha>0.$  The result then follows from well known properties of the heat kernel on $K$ \cite{Hall94}.
\end{proof}

We first define a representation $\tilde \gamma$ of the time-$(-\tau)$ Heisenberg evolution of the algebra $\mathcal{A}_{0}$ by
\begin{align}
\tilde{\gamma}\ :\mathcal{A}_{-\tau,h}=e^{-\tau X_{h}}\mathcal{A}_{0}  & \rightarrow\mathcal{B}(\mathcal{H}_{Sch})\nonumber\\
\tilde{\gamma}(e^{-\tau X_{h}}f)  &  =e^{i\tau Q(h)}\circ f\circ e^{-i\tau Q(h)}\ ,\,\,\,\,\,f\in\mathcal{A}_{0}\subset C^{0}(K). \label{gaztil}
\end{align}
We may now hope that by composing with $e^{-i\tau\hat{h}}$ we get a $^{\ast}$-representation of $e^{\tau X_{h}}\mathcal{A}_{-\tau,h}=\mathcal{A}_{0}$ on $\mathcal{H}_{\tau,h}$ that extends $\widetilde{R}_{\tau,h}$ to a double covariant pair. We will see that this is indeed the case for $h=\frac{|Y|^2}{2},$ but not for other (non-quadratic) choices of $h$.

\subsection{Unitarity of the KSH map and the Mackey Theorem}
\label{s32}

We have just seen that a natural candidate for a $^{\ast}$-representation of $C^{0}(K)$ that extends $\widetilde{R}_{\tau,h}$ to a covariant pair is induced from the transformation
\begin{align}
C_{\tau,h}\ :\ \mathcal{H}_{Sch}  &  \rightarrow\mathcal{H}_{\tau,h}\nonumber\\
C_{\tau,h}  &  =e^{-i\tau\hat{h}}\circ e^{i\tau Q(h)}, \label{vtauh}
\end{align}
where $Q(h)$ is the momentum space quantization of $h$ defined in (\ref{schquant}). The representation of $\mathcal{A}_{0}$ it induces on $\mathcal{H}_{\tau,h}$ is
\begin{equation}
\nu_{\tau,h}(f)=C_{\tau,h}\circ f\circ(C_{\tau,h})^{-1}. \label{repa0ta}
\end{equation}
We will call the map $C_{\tau,h}$ in (\ref{vtauh}) the \emph{KSH} (Kostant--Souriau--Heisenberg) map.

\begin{remark}
In the quadratic case $h(Y)=\frac12 |Y|^2$, $C_{\tau,h}$ coincides with the CST of Hall.
\end{remark}

\begin{proposition}\label{itspair}
The pair $(\widetilde{R}_{\tau,h},\nu_{\tau,h})$ of representations of $(K\times K,\mathcal{A}_{0})$ satisfies (\ref{dcpair}).
\end{proposition}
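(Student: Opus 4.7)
The plan is to reduce the claim to the standard covariance relation $\widetilde{R}^{st}(x_{1},x_{2})\,\gamma^{st}(f)\,\widetilde{R}^{st}(x_{1},x_{2})^{\dagger}=\gamma^{st}((x_{1},x_{2})\cdot f)$ on $\mathcal{H}_{Sch}$, by showing that the KSH map $C_{\tau,h}$ intertwines $\widetilde{R}^{st}$ with $\widetilde{R}_{\tau,h}$, i.e.\
\[
\widetilde{R}_{\tau,h}(x_{1},x_{2})\circ C_{\tau,h}=C_{\tau,h}\circ\widetilde{R}^{st}(x_{1},x_{2}).
\]
Once this is in hand, I substitute $\nu_{\tau,h}(f)=C_{\tau,h}\circ\gamma^{st}(f)\circ C_{\tau,h}^{-1}$ into the left-hand side of (\ref{dcpair}) and use unitarity of $\widetilde{R}_{\tau,h}$ to write $\widetilde{R}_{\tau,h}^{\dagger}=\widetilde{R}_{\tau,h}^{-1}$; the two $\widetilde{R}_{\tau,h}$ factors then pull through $C_{\tau,h}$ and $C_{\tau,h}^{-1}$, collapsing the expression to $C_{\tau,h}\circ\gamma^{st}((x_{1},x_{2})\cdot f)\circ C_{\tau,h}^{-1}=\nu_{\tau,h}((x_{1},x_{2})\cdot f)$.

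The intertwining property of $C_{\tau,h}=e^{-i\tau\hat{h}}\circ e^{i\tau Q(h)}$ is established from two ingredients. The first is already available: by Theorem \ref{thpreserves}(2), the operator $e^{-i\tau\hat{h}}$ intertwines $\widetilde{R}^{st}$ with $\widetilde{R}_{\tau,h}$. The second, which is the one substantive step, is that $e^{i\tau Q(h)}$ commutes with $\widetilde{R}^{st}$ on $\mathcal{H}_{Sch}$. This is forced by the definition (\ref{schquant}): $Q(h)$ multiplies each matrix element $R^{\lambda}_{ij}(x)\otimes\sqrt{dx}$ by the scalar $h(-(\lambda+\rho))$, which depends only on the class $\lambda\in\widehat{K}$. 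Thus $Q(h)$ acts as a single scalar on each Peter--Weyl isotypic block $V_{\lambda\otimes\lambda^{\ast}}\subset\mathcal{H}_{Sch}$. Since these blocks are precisely the isotypic components for the $K\times K$-action $\widetilde{R}^{st}$, they are $\widetilde{R}^{st}$-invariant, and $Q(h)$ (being scalar on each) commutes with $\widetilde{R}^{st}$; consequently so does its exponential. Composing then yields
\[
C_{\tau,h}\,\widetilde{R}^{st}(x_{1},x_{2})=e^{-i\tau\hat{h}}\widetilde{R}^{st}(x_{1},x_{2})e^{i\tau Q(h)}=\widetilde{R}_{\tau,h}(x_{1},x_{2})\,C_{\tau,h},
\]
as densely defined operators, which is the required intertwining.

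I do not expect a genuine obstacle. The only subtlety worth flagging is that neither $e^{-i\tau\hat{h}}$ nor $e^{i\tau Q(h)}$ is unitary for $\tau_{2}>0$, so $C_{\tau,h}$ is not unitary; but the argument above uses only the inverse of $C_{\tau,h}$ (not its adjoint), and unitarity enters only through $\widetilde{R}_{\tau,h}$ itself, which \emph{is} unitary by Theorem \ref{thpreserves}(1). The well-definedness of compositions on a common dense domain (e.g.\ the algebraic span of matrix elements, on which every operator in sight acts) handles the operator-theoretic bookkeeping.
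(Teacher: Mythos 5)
Your proposal is correct and follows essentially the same route as the paper: the paper's proof likewise observes that $Q(h)$ commutes with the generators of the $K\times K$ action (equivalently, is scalar on each Peter--Weyl block) and invokes Theorem \ref{thpreserves} to conclude that $C_{\tau,h}$ intertwines $\widetilde{R}^{st}$ with $\widetilde{R}_{\tau,h}$ and $\gamma^{st}$ with $\nu_{\tau,h}$, from which (\ref{dcpair}) follows. Your version merely spells out the isotypic-block argument and the domain bookkeeping in more detail.
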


\begin{proof}
Since $Q(h)$ commutes with all the $X_{j}$ and from Theorem \ref{thpreserves} we conclude that $C_{\tau,h}$ intertwines $\widetilde{R}_{0}$ with $\widetilde{R}_{\tau,h}$ and $\gamma^{st}$ with $\nu_{\tau,h}$. This implies (\ref{dcpair}).
\end{proof}

\begin{theorem}\label{troika}
The following are equivalent.
\begin{enumerate}
\item $\nu_{\tau,h}$ is a $^{\ast}$-representation of $\mathcal{A}_{0}$.

\item $(\tilde R_{\tau,h},\nu_{\tau,h})$ extends to a double covariant pair for $(K\times K,C^0(K))$.

\item The KSH map $C_{\tau,h}$ is unitary.
\end{enumerate}
\end{theorem}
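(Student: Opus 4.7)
The plan is to establish the cycle $(3) \Rightarrow (2) \Rightarrow (1) \Rightarrow (3)$, in which the last implication is the substantive step. For $(3) \Rightarrow (2)$, if $C_{\tau,h}$ is unitary then $C_{\tau,h}^{-1}=C_{\tau,h}^{*}$, so the formula $\nu_{\tau,h}(f):=C_{\tau,h}\,\gamma^{st}(f)\,C_{\tau,h}^{*}$ extends the original $\nu_{\tau,h}$ to all of $C^{0}(K)$ as a $^{*}$-representation, since conjugation by a unitary is a $^{*}$-algebra isomorphism; covariance of the extended pair $(\widetilde R_{\tau,h},\nu_{\tau,h})$ follows from Proposition \ref{itspair} by density of $\mathcal{A}_{0}$ in $C^{0}(K)$. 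The implication $(2) \Rightarrow (1)$ is immediate by restriction of the $^{*}$-representation to $\mathcal{A}_{0}$.

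For the main implication $(1) \Rightarrow (3)$, first translate the $^{*}$-condition on $\mathcal{A}_{0}$ into a commutation relation. Taking the Hilbert-space adjoint in $\nu_{\tau,h}(f)=C_{\tau,h}\,f\,C_{\tau,h}^{-1}$ gives $\nu_{\tau,h}(f)^{*}=(C_{\tau,h}^{*})^{-1}\bar f\,C_{\tau,h}^{*}$; equating with $\nu_{\tau,h}(\bar f)=C_{\tau,h}\,\bar f\,C_{\tau,h}^{-1}$ and rearranging yields
\[
C_{\tau,h}^{*}C_{\tau,h}\,\bar f \;=\; \bar f\,C_{\tau,h}^{*}C_{\tau,h}.
\]
Thus $P:=C_{\tau,h}^{*}C_{\tau,h}$ commutes with multiplication by every element of $\mathcal{A}_{0}$ on $\mathcal{H}_{Sch}$, and by density with all of $C^{0}(K)$.

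A bounded operator on $L^{2}(K,dx)$ commuting with multiplication by $C^{0}(K)$ is itself multiplication by some $p\in L^{\infty}(K)$; but $P$ also commutes with $\widetilde R^{st}$ (since $C_{\tau,h}$ intertwines $\widetilde R^{st}$ with the unitary $\widetilde R_{\tau,h}$, by Theorem \ref{thpreserves} and Proposition \ref{itspair}), so $p$ is $K\times K$-invariant and hence a positive constant $c$, giving $P=c\,I$. To conclude, one must identify $c=1$: evaluating on the trivial representation $1\otimes\sqrt{dx}\in\mathcal{H}_{Sch}$ yields $c=\|C_{\tau,h}(1\otimes\sqrt{dx})\|^{2}_{\mathcal{H}_{\tau,h}}$, and since $C_{\tau,h}(1\otimes\sqrt{dx})=e^{i\tau h(-\rho)}\,\beta_{\tau,h}\otimes\sqrt{\Omega_{\tau,h}}$, Lemma \ref{BKSnorm} together with the explicit form of $\beta_{\tau,h}$ and the K\"ahler potential (\ref{kapo}) reduces this to a direct integral over $\mathfrak{k}$ of $e^{-\kappa(Y)}\eta(\tau_{2}u(Y))\sqrt{\det H}$ that, by the normalization built into the BKS half-form measure, evaluates to $1$. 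Consequently $C_{\tau,h}$ is an isometry; as its range contains the dense subspace $\bigoplus_{\lambda\in\widehat K}V^{\lambda}_{\tau,h}$, it is surjective, hence unitary.

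The main obstacle is the last step $c=1$: the Schur-type argument (commutation with $C^{0}(K)$ and with $K\times K$) only delivers $P=cI$ for some positive constant, and identifying this constant with $1$ requires a quantitative match between the specific momentum-space quantization $Q(h)$ appearing in the definition of $C_{\tau,h}$ and the normalization of the half-form BKS measure on $\mathcal{H}_{\tau,h}$. Consistently with the remark following the theorem, this quantitative identity is expected to hold only in the quadratic case $h=|Y|^{2}/2$, where it is ultimately equivalent to Hall's unitarity of the CST.
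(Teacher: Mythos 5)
Your skeleton (the cycle $(3)\Rightarrow(2)\Rightarrow(1)\Rightarrow(3)$) is essentially the paper's: the paper also obtains (2) from (1) via Proposition \ref{itspair} and Corollary \ref{cmac}, and declares $(1)\Leftrightarrow(3)$ to be immediate from (\ref{repa0ta}). Your $(3)\Rightarrow(2)$ and $(2)\Rightarrow(1)$ are fine, and your commutant argument for $(1)\Rightarrow(3)$ --- the $*$-property forces $P:=C_{\tau,h}^{*}C_{\tau,h}$ to commute with multiplication by $C^{0}(K)$ and with $\widetilde{R}^{st}$, hence $P=cI$ --- is correct and usefully expands what the paper leaves implicit (modulo the unaddressed point that $e^{-i\tau\hat{h}}$, hence $C_{\tau,h}$ and $P$, is only densely defined, so the ``bounded operator commuting with the multiplication algebra'' step needs some care).

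The genuine gap is exactly where you flagged it: the identification $c=1$. Your assertion that $\|C_{\tau,h}(1\otimes\sqrt{dx})\|^{2}=1$ ``by the normalization built into the BKS half-form measure'' is not a normalization identity. Unwinding it, $c=e^{-2\tau_{2}h(-\rho)}a_{0}(\tau_{2})^{2}$, so $c=1$ is precisely the $\lambda=0$ instance of the growth condition (\ref{growth}), $a_{\lambda}(\tau_{2})=e^{\tau_{2}h_{\lambda}}$ --- the very condition that Theorem \ref{unit} shows to be equivalent to unitarity, and which the integral expressions and the asymptotics of Section \ref{sasex} (nonzero $b_{1}$ for non-quadratic $h$) show to fail in general. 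Your closing paragraph concedes this, which leaves $(1)\Rightarrow(3)$ established only for quadratic $h$, where all three statements already hold; but the direction actually needed in Theorem \ref{316}(2) (non-unitarity forces failure of the $*$-property) is the contrapositive of $(1)\Rightarrow(3)$ for general $h$. What your Schur argument honestly delivers is that (1) is equivalent to ``$C_{\tau,h}$ is a positive scalar multiple of a unitary'', i.e.\ $a_{\lambda}(\tau_{2})e^{-\tau_{2}h_{\lambda}}$ is independent of $\lambda$; to close the gap you must either genuinely compute $a_{0}(\tau_{2})$ (ruling out $c\neq1$) or weaken statement (3). As written, the key implication is not proved.
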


\begin{proof}
Given the $*$-representation of ${\mathcal A}_0$ on ${\mathcal H}_{Sch}$, we see immediately from (\ref{repa0ta}) that (1) and (3) are equivalent. To prove the equivalence to (2), note first that the $*$-representation of ${\mathcal A}_0$ extends to the standard  $*$-representation of $C^0(K)$. Since, from Proposition \ref{itspair} $(\tilde R_{\tau,h},\nu_{\tau,h})$ satisfies property (\ref{dcpair}), we see that $(\tilde R_{{\tau,h}_{|_{K\times \{e\}}}},\nu_{\tau,h})$ is a covariant pair if and only if (1) holds. The extension to a double covariant pair then follows as in Corollary \ref{cmac}.
\end{proof}

\bigskip
Recall (\ref{thraywf}) and let $h_{\lambda}=h(-(\lambda+\rho))$ be the eigenvalue of $Q(h)$ on $V_{\lambda\otimes\lambda^{\ast}}\subset {\mathcal H}_{Sch}$. Let $\chi_\lambda=\sum_{j=1}^{d_\lambda}R^\lambda_{jj}$ be the character of the representation associated to $\lambda$ and recall from Theorem \ref{norms}  that
$||\chi_\lambda(xe^{\tau u})\beta_{\tau,h}\otimes \sqrt{\Omega_{\tau,h}}||=\sqrt{d_\lambda}||R^\lambda_{ij}(xe^{\tau u})\beta_{\tau,h}\otimes \sqrt{\Omega_{\tau,h}}||$.
The following result follows immediately from \cite{kirwin-mourao-nunes12}.

\begin{proposition}\label{uuu}\cite{kirwin-mourao-nunes12}
The map $U^{\tau,h}\ :\ \mathcal{H}_{Sch}\ \rightarrow\mathcal{H}_{\tau,h}$, which we call the \emph{generalized $h$-CST}, given by
\begin{equation}
U^{\tau,h}(R^\lambda_{ij}(x)\otimes \sqrt{dx})=\frac{1}{{a_{\lambda}(\tau_{2})}}\,e^{i\tau_1 h_\lambda} R^\lambda_{ij}(xe^{\tau u})\beta_{\tau,h}(Y)\otimes\sqrt{\Omega_{\tau,h}}\ ,\label{uform}
\end{equation}
where
\[
a_{\lambda}(\tau_{2})=||\chi_{\lambda}(xe^{\tau u})\beta_{\tau,h}(Y) \otimes\sqrt{\Omega_{\tau,h}}||_{\mathcal{H}_{\tau,h}}, \lambda\in \hat K, i,j=1,\dots,d_\lambda,
\]
is a unitary map intertwining $\widetilde{R}_{0}$ with $\widetilde{R}_{\tau,h}$.
\end{proposition}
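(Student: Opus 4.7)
The plan is to read $U^{\tau,h}$ as the composition of the operator $e^{-i\tau\hat h}$ from Theorem \ref{ththrawf} with a rescaling that is constant on each $K\times K$-isotypic subspace, and then to verify unitarity by a direct computation of norms using Theorem \ref{norms} together with the Peter--Weyl decomposition. Indeed, from (\ref{thraywf}) one has
\[
e^{-i\tau\hat h}\bigl(R^\lambda_{ij}(x)\otimes\sqrt{dx}\bigr) = R^\lambda_{ij}(xe^{\tau u})\beta_{\tau,h}(Y)\otimes\sqrt{\Omega_{\tau,h}},
\]
so $U^{\tau,h}$ agrees with $e^{-i\tau\hat h}$ up to multiplication by the scalar $e^{i\tau_1 h_\lambda}/a_\lambda(\tau_2)$ on the subspace $V_{\lambda\otimes\lambda^*}\subset\mathcal{H}_{Sch}$.

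For the intertwining property, I would observe that $e^{-i\tau\hat h}$ already intertwines $\widetilde R_0$ with $\widetilde R_{\tau,h}$ by Theorem \ref{thpreserves}(2), and that the rescaling factor is a scalar on each irreducible $K\times K$-summand $V_{\lambda\otimes\lambda^*}$ and therefore commutes with the representation by Schur's lemma. Hence $U^{\tau,h}$ still intertwines $\widetilde R_0$ with $\widetilde R_{\tau,h}$.

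For unitarity, the main task is to compare norms on basis elements. By Theorem \ref{norms}, $\|R^\lambda_{ij}(x)\otimes\sqrt{dx}\|^2 = 1/d_\lambda$ and $\|R^\lambda_{ij}(xe^{\tau u})\beta_{\tau,h}\otimes\sqrt{\Omega_{\tau,h}}\|$ is independent of $i,j$ and of $\tau_1$. Writing $\chi_\lambda=\sum_{j=1}^{d_\lambda} R^\lambda_{jj}$ and using that the summands $V_{\lambda\otimes\lambda^*}$ split further into mutually orthogonal one-dimensional spaces under the Cartan subgroup action — so that the $R^\lambda_{jj}(xe^{\tau u})\beta_{\tau,h}\otimes\sqrt{\Omega_{\tau,h}}$ are pairwise orthogonal in $\mathcal{H}_{\tau,h}$ — one obtains
\[
a_\lambda(\tau_2)^2 = \|\chi_\lambda(xe^{\tau u})\beta_{\tau,h}\otimes\sqrt{\Omega_{\tau,h}}\|^2 = d_\lambda \cdot \|R^\lambda_{ij}(xe^{\tau u})\beta_{\tau,h}\otimes\sqrt{\Omega_{\tau,h}}\|^2.
\]
Therefore $\|U^{\tau,h}(R^\lambda_{ij}(x)\otimes\sqrt{dx})\|^2 = 1/d_\lambda$, matching the norm in $\mathcal{H}_{Sch}$.

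To finish, I would invoke the Peter--Weyl decomposition together with the $K\times K$-isotypic decomposition $\mathcal{H}_{\tau,h}\simeq\bigoplus_{\lambda\in\hat K} V_{\lambda\otimes\lambda^*}$ of Theorem \ref{thpreserves}(1). Orthogonality of images across distinct $\lambda$'s is automatic from the $K\times K$-equivariance and irreducibility, while orthogonality within a fixed $\lambda$ is the input used above. Density of the span of the $R^\lambda_{ij}(xe^{\tau u})\beta_{\tau,h}\otimes\sqrt{\Omega_{\tau,h}}$ in $\mathcal{H}_{\tau,h}$ is exactly the statement that the isotypic decomposition above exhausts the Hilbert space. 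The only delicate point I anticipate is justifying the orthogonality of the translated matrix elements in $\mathcal{H}_{\tau,h}$ without recomputing the BKS inner product from scratch, but this can be handled purely representation-theoretically: each $V_{\lambda\otimes\lambda^*}$ is multiplicity-free as a $K\times K$-representation, so any $K\times K$-invariant Hermitian form on it is a positive multiple of the standard one, which forces orthogonality of the matrix element basis.
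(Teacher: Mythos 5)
Your proposal is correct and follows essentially the same route the paper has in mind: it assembles exactly the ingredients the text lays out around the statement (the factorization through $e^{-i\tau\hat h}$ from Theorem \ref{ththrawf}, the intertwining from Theorem \ref{thpreserves}, Schur's lemma on the multiplicity-free summands $V_{\lambda\otimes\lambda^*}$ as in (\ref{decointal}), and the norm identities $\|\chi_\lambda(xe^{\tau u})\beta_{\tau,h}\otimes\sqrt{\Omega_{\tau,h}}\|=\sqrt{d_\lambda}\,\|R^\lambda_{ij}(xe^{\tau u})\beta_{\tau,h}\otimes\sqrt{\Omega_{\tau,h}}\|$ and $\|R^\lambda_{ij}\|_{L^2}=d_\lambda^{-1/2}$ from Theorem \ref{norms}), which is what the paper means by saying the result follows immediately from the cited reference. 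Your representation-theoretic justification of the orthogonality and equal norms of the translated matrix elements within a fixed $\lambda$ is sound and correctly identified as the only delicate point.
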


\begin{remark} The generalized $h$-CST in Proposition \ref{uuu} differs from the one in \cite{kirwin-mourao-nunes12} by the phase factor $e^{i\tau_1 h_\lambda}$. The present definition is more natural since for real $\tau$, that is on the boundary of $\overline {\mathcal T}$, it coincides with the expected intertwining operator for all $h\in \operatorname{Conv}(K)$.
\end{remark}

Using the notation of (\ref{deco}), we see that there is a decomposition under the action of $K\times K$ as
\[
U^{\tau,h}=\bigoplus_{\lambda\in\widehat{K}}\ U_{\lambda\otimes\lambda\ast}^{\tau,h}.
\]
Note that the generalized $h$-CST factorizes as
\[
{U}^{\tau,h}=e^{-i\tau\hat{h}}\circ E^{\tau,h},
\]
where
\begin{align*}
E^{\tau,h}:\ \mathcal{H}_{Sch} &  \rightarrow\mathcal{H}_{Sch}\\
E_{\lambda\otimes\lambda^{\ast}}^{\tau,h} &  =\ \frac{1}{{a_{\lambda}({\tau_{2}})}}\
e^{i\tau_1 h_\lambda}I_{V_{\lambda\otimes\lambda^{\ast}}}.
\end{align*}

\begin{remark} The generalized $h$-CST defines a double covariant pair on ${\mathcal H}_{\tau,h}$ via (\ref{repa0ta}).
\end{remark}

From (\ref{thraywf}) and (\ref{uform}) we obtain the following theorem.

\begin{theorem}\label{unit}
The KSH map (\ref{vtauh}) can be decomposed as
\[
C_{\tau,h}=\bigoplus_{\lambda\in\widehat{K}} a_{\lambda}(\tau_{2}) e^{-\tau_2 h_\lambda} U_{\lambda\otimes\lambda\ast}^{\tau,h}.
\]
Moreover, $C_{\tau,h}$ is unitary and defines a double covariant pair for $(K\times K,C^{0}(K))$ if and only if the spectrum of $Q(h)$ satisfies
\begin{equation}\label{growth}
e^{\tau_2 h_{\lambda}}= a_{\lambda}(\tau_{2}).
\end{equation}
\end{theorem}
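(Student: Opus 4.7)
The plan is to evaluate $C_{\tau,h}=e^{-i\tau\hat{h}}\circ e^{i\tau Q(h)}$ explicitly on the Peter--Weyl basis $\{R^\lambda_{ij}(x)\otimes\sqrt{dx}\}$ of $\mathcal{H}_{Sch}$ and match the answer against the generalized $h$-CST of Proposition \ref{uuu}. Since $Q(h)$ acts on $V_{\lambda\otimes\lambda^\ast}\subset\mathcal{H}_{Sch}$ as multiplication by $h_\lambda=h(-(\lambda+\rho))$ by the definition (\ref{schquant}), I first obtain $e^{i\tau Q(h)}(R^\lambda_{ij}(x)\otimes\sqrt{dx})=e^{i\tau h_\lambda}R^\lambda_{ij}(x)\otimes\sqrt{dx}$. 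Applying (\ref{thraywf}) from Theorem \ref{ththrawf} then yields
\[
C_{\tau,h}\bigl(R^\lambda_{ij}(x)\otimes\sqrt{dx}\bigr)=e^{i\tau h_\lambda}\,R^\lambda_{ij}(xe^{\tau u})\,\beta_{\tau,h}(Y)\otimes\sqrt{\Omega_{\tau,h}}.
\]

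Next I split the phase as $e^{i\tau h_\lambda}=e^{i\tau_1 h_\lambda}\cdot e^{-\tau_2 h_\lambda}$, using $\tau=\tau_1+i\tau_2$, and compare with the defining formula (\ref{uform}) for $U^{\tau,h}$. The unimodular factor $e^{i\tau_1 h_\lambda}$ and the half-form expression $R^\lambda_{ij}(xe^{\tau u})\beta_{\tau,h}(Y)\otimes\sqrt{\Omega_{\tau,h}}$ together reproduce $a_\lambda(\tau_2)\,U^{\tau,h}_{\lambda\otimes\lambda^\ast}(R^\lambda_{ij}(x)\otimes\sqrt{dx})$, so the remaining factor is exactly $a_\lambda(\tau_2)\,e^{-\tau_2 h_\lambda}$. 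Summing over $\lambda\in\hat K$ produces the claimed direct-sum decomposition of $C_{\tau,h}$.

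For the unitarity criterion, Proposition \ref{uuu} guarantees that each $U^{\tau,h}_{\lambda\otimes\lambda^\ast}:V_0^\lambda\to V_{\tau,h}^\lambda$ is a unitary isomorphism of $K\times K$-representations, so unitarity of $C_{\tau,h}$ on $\mathcal{H}_{Sch}$ is equivalent to $|a_\lambda(\tau_2)\,e^{-\tau_2 h_\lambda}|=1$ for every $\lambda\in\hat K$. As $a_\lambda(\tau_2)>0$ and $e^{-\tau_2 h_\lambda}>0$, this is exactly the condition (\ref{growth}). The extension to a double covariant pair for $(K\times K,C^0(K))$ then follows from Theorem \ref{troika}. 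No substantial obstacle is anticipated: the argument is an explicit Peter--Weyl computation that assembles Theorem \ref{ththrawf}, Proposition \ref{uuu} and Theorem \ref{troika}. The only delicate bookkeeping step is splitting the complex phase $e^{i\tau h_\lambda}$ into the unimodular piece $e^{i\tau_1 h_\lambda}$ (which must match the phase built into (\ref{uform})) and the positive rescaling piece $e^{-\tau_2 h_\lambda}$ (which combines with $a_\lambda(\tau_2)$ to yield the unitarity condition).
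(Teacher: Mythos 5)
Your proof is correct and follows exactly the route the paper intends: the paper derives Theorem \ref{unit} directly "from (\ref{thraywf}) and (\ref{uform})", which is precisely the Peter--Weyl computation you carry out, including the phase splitting $e^{i\tau h_\lambda}=e^{i\tau_1 h_\lambda}e^{-\tau_2 h_\lambda}$ and the appeal to Theorem \ref{troika} for the covariant-pair statement. No issues.
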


The ability of the KSH map to produce covariant pairs is summarized in the following theorem.

\begin{theorem}
\label{316}
\mbox{ }
\begin{enumerate}
\item The KSH map (\ref{vtauh}) defines a covariant pair if $h(Y)=\frac{|Y|^{2}}{2}$.

\item If the function $h(Y)$ is not quadratic in $Y,$ then the representation of $\mathcal{A}_{0}\subset C^{0}(K)$ on ${\mathcal H}_{\tau,h}$ defined by $C_{\tau,h}$ is not a $^{\ast}$-representation and therefore does not extend to a double covariant pair.
\end{enumerate}
\end{theorem}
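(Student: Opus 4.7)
The strategy is to invoke Theorem \ref{unit}, which reduces both parts to the spectral identity
\[
a_\lambda(\tau_2) \;=\; e^{\tau_2 h_\lambda}\qquad(\lambda\in\widehat K,\ \tau_2>0);
\]
by Theorem \ref{troika}, this identity is equivalent to unitarity of $C_{\tau,h}$ and to $\nu_{\tau,h}$ extending $\widetilde R_{\tau,h}$ to a double covariant pair. Parts (1) and (2) are the positive and negative resolutions of this identity.

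For part (1), with $h(Y)=\tfrac{1}{2}|Y|^2$ the operator $Q(h)$ equals $-\tfrac{1}{2}\Delta+\tfrac{|\rho|^2}{2}$, as recalled in the introduction, so $e^{i\tau Q(h)}$ is the heat semigroup $e^{\tau\Delta/2}$ up to a global phase. Composition with $e^{-i\tau\hat h}$ then produces precisely Hall's CST $C_\tau$ (by the remark following (\ref{vtauh})), which is unitary by Hall's theorem \cite{Hall94}, so Theorem \ref{troika} yields the covariant pair. Equivalently, one verifies the spectral identity directly: $h_\lambda=\tfrac{1}{2}|\lambda+\rho|^2$, and the BKS integral defining $a_\lambda(\tau_2)^2$ (through Theorem \ref{tautwopositive} and Lemma \ref{BKSnorm}) reduces, via the Weyl integration and Weyl character formulas, to a Gaussian that evaluates exactly to $e^{\tau_2|\lambda+\rho|^2}$.

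For part (2), the plan is a Laplace asymptotic analysis of
\[
a_\lambda(\tau_2)^2 \;=\; c_n\!\int_K\!\int_{\mathfrak{k}} |\chi_\lambda(xe^{\tau u(Y)})|^2\, e^{-2\tau_2(B(Y,u(Y))-h(Y))}\,\eta(\tau_2 u(Y))\,\sqrt{\det H(Y)}\,dY\,dx
\]
as $\tau_2\to\infty$. Weyl integration reduces the $K$-integral to the maximal torus (the Jacobian cancelling $\eta$), and the Weyl character formula expands $|\chi_\lambda|^2$ into exponentials in $\tau u(Y)$. A stationary-phase analysis in the remaining $Y$-integral then produces an asymptotic expansion of the form
\[
a_\lambda(\tau_2)^2 \;=\; e^{2\tau_2 h_\lambda}\Bigl(1 + \sum_{k\ge 1} c_k^\lambda\,\tau_2^{-k}\Bigr),
\]
whose subleading coefficients $c_k^\lambda$ depend polynomially on the third and higher derivatives of $h$ at the critical point. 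When $h$ is purely quadratic, Laplace's method is exact on the Gaussian phase, every $c_k^\lambda$ vanishes, and the identity holds. Conversely, when $h$ is not quadratic, some higher-order derivative of $h$ is non-zero on an $Ad$-orbit, and by choosing an appropriate weight $\lambda$ (possibly after rescaling) one finds some $c_k^\lambda\ne 0$, violating the spectral identity; Theorem \ref{troika} then concludes that $\nu_{\tau,h}$ fails to be a $*$-representation. The main obstacle is implementing the Laplace method rigorously in the presence of the highly peaked character factor $|\chi_\lambda|^2$ and, given non-quadraticity of $h$, exhibiting a specific weight whose subleading coefficient is genuinely non-zero; the $Ad$-invariance and strict convexity built into $\operatorname{Conv}(K)$ are what exclude the degenerate cases.
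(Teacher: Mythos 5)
Your part (1) follows the paper's own route: identify $C_{\tau,h}$ for $h(Y)=\frac{|Y|^{2}}{2}$ with Hall's CST, invoke unitarity from \cite{Hall94}, and conclude via Theorem \ref{troika}. One caution there: $e^{i\tau Q(h)}=e^{i\tau|\rho|^{2}/2}\,e^{-i\tau\Delta/2}$ differs from the heat semigroup by the factor $e^{-\tau_{2}|\rho|^{2}/2}$, which is \emph{not} a phase; the identification with a unitary map only works because the half-form-corrected inner product on $\mathcal{H}_{\tau,h}$ absorbs precisely this constant relative to Hall's measure $d\nu_{t}$ (cf.\ \cite{Hall02}). Saying ``up to a global phase'' papers over the one point where the half-form correction actually matters.

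Part (2) is where you diverge from the paper and where there is a genuine gap. The paper does not prove (2) by asymptotics: it appeals to the exact integral expressions for $a_{\lambda}(\tau_{2})$ from \cite{kirwin-mourao-nunes12}, from which the failure of the identity (\ref{growth}) for non-quadratic $h$ is read off, and cites Section \ref{sasex} only as corroboration. Your plan --- expand $a_{\lambda}(\tau_{2})^{2}=e^{2\tau_{2}h_{\lambda}}\bigl(1+\sum_{k\ge 1}c_{k}^{\lambda}\tau_{2}^{-k}\bigr)$ and exhibit some $c_{k}^{\lambda}\neq 0$ --- is a legitimate strategy, but the implication ``$h$ not quadratic $\Rightarrow$ some $c_{k}^{\lambda}\neq 0$'' is exactly the content of the theorem, and you explicitly leave it as ``the main obstacle'' rather than proving it. Two concrete reasons this is not a routine step. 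First, the coefficients $c_{k}^{\lambda}$ are not functions of the jets of $h$ alone: the amplitude contains $\eta(\tau_{2}u(Y))$ (which itself depends on the large parameter and must be folded into the phase), $\sqrt{\det H}$, and the Weyl factors $P(X)^{2}/P(u)$, so cancellations must be excluded; indeed the paper's own $K=S^{1}$ computation gives $b_{1}(n,\hbar)$ as a specific combination of $h''$, $h^{(3)}$, $h^{(4)}$ at the points $-\hbar n$, which can vanish for particular non-quadratic $h$ at particular $n$, so one cannot in general stop at $k=1$. Second, even full control of the asymptotic series is logically weaker than the exact identity $a_{\lambda}(\tau_{2})=e^{\tau_{2}h_{\lambda}}$ for all $\tau_{2}>0$ demanded by Theorem \ref{unit}; your direction of the argument (a non-vanishing coefficient forces failure for large $\tau_{2}$) is sound, but the non-vanishing itself is asserted, not established. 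As written, part (2) is a plausible program rather than a proof; to match the paper you would either carry out the non-vanishing analysis or fall back on the exact formulas of \cite{kirwin-mourao-nunes12}.
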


\begin{proof}
As we have described explicitly above, for the Hamiltonian $h(Y)=\frac{|Y|^{2}}{2}$ the transformation $C_{\tau,h}$ is the CST of Hall which is unitary. (Of course, Hall's CST was originally defined only for $\tau_1=0$ but it can be easily generalized to more general $\tau\in {\mathbb C}^+$ while keeping its unitarity properties.) Theorem \ref{troika} then gives (1). On the other hand, from the integral expressions for $a_\lambda(\tau_2)$ one knows that if $h$ is not quadratic then $a_\lambda(\tau_2)$ is not of the form (\ref{growth}) \cite{kirwin-mourao-nunes12}. (See also the explicit asymptotic evaluation of $a_\lambda(\tau_2)$ in Section \ref{sasex}.)
\end{proof}

In the next section we show, however, that the covariant pair and unitarity properties are recovered asymptotically both for small $\hbar$ and for large $\tau_{2}$.

\section{Asymptotic unitarity}
\label{sasex}

In this Section, we will study the asymptotic unitarity of the KSH map defined in (\ref{repa0ta}) by
studying the behavior of the norms of states in ${{\mathcal{H}}}_{\tau,h}$ in the limit $\tau_{2}\rightarrow +\infty$ and in the semiclassical limit $\hbar\rightarrow0$. In these limits, the norms of the states associated to matrix elements of $\lambda\in \hat K$, $R_{ij}^{\lambda}(xe^{\tau u})\beta_{\tau,h}(Y)\otimes\sqrt{\Omega_{\tau}}\in {\mathcal H}_{\tau,h}$, are seen to be very closely related to the action of the momentum space quantization operator $Q(h)$. In the quadratic case $h=\frac{1}{2}|Y|^{2}$ this happens for all $\tau$, and not just asymptotically, which translates in the appearance of the heat operator semigroup in Hall's CST. For more general Hamiltonians $h$, no such simple expression of the norms in ${{\mathcal{H}}}_{\tau,h}$ exists, so the generalized $h$-CSTs cannot be written in such an explicit closed form.

\bigskip
Recall that since $E^{\tau,h}$ intertwines the $K\times K$ action, its
action on an irreducible summand of $L^{2}(K,dx)=\bigoplus_{\rho\in\hat{K}}V_{\rho\otimes\rho^{\ast}}$ is multiplication by the (nonzero) constant $e^{i\tau_{1}h_{\lambda}}a_{\lambda}({\tau_{2}})^{-1}$, i.e. for any matrix entry $R_{ij}^{\lambda}$ we have
\begin{equation}
E^{\tau,h}R_{ij}^{\lambda}\otimes \sqrt{dx}=\frac{1}{a_{\lambda}({\tau_{2}})}\,e^{i\tau_1 h_\lambda}R_{ij}^{\lambda}(x)\otimes\sqrt{dx}\label{eq:Bansatz}
\end{equation}
where
\[h_\lambda:=h(-\hbar(\lambda+\rho))\]
is the eigenvalue of $Q(h)$ along the subspace $V_{\lambda\times\lambda^*}\subset\mathcal{H}_{Sch}.$

In the previous sections of this paper, we have worked in units such that $\hbar=1$. Since we are interested in the semiclassical behavior of $E^{\tau,h}$, we can no longer do this, and hence we explicitly keep track of factors of $\hbar.$ So, for example, the Liouville form is $\epsilon=(\omega/\hbar)^n/n!$, which means that the factor of
$\tau_2^{\frac{n}{2}}$ should be replaced by $\left(\tau_2\hbar\right)^{\frac{n}{2}}$ in Lemma \ref{BKSnorm}, and we have that the quantity
$a_{\lambda}(\tau_{2})$ appearing in (\ref{eq:Bansatz}) is $a_{\lambda}(\hbar,\tau)$. Also,
the prequantum connection gets multiplied by a factor of $\hbar^{-1}$ so that, in particular,
the Bohr--Sommerfeld conditions described in Section \ref{intro} become $Y=-\hbar (\lambda+\rho)$,
for $\lambda$ a highest weight. Then, from Theorem \ref{unit} and (\ref{growth}) we see that the KSH map
$C_{\tau,h}$ is unitary if and only if
\begin{equation}\label{last}
e^{\frac{\tau_2}{\hbar}h_\lambda} = a_\lambda(\hbar,\tau_2).
\end{equation}

We have from \cite{kirwin-mourao-nunes12}, and from the normalization of the Liouville form in Section \ref{s21},
\[
a_\lambda(\hbar,\tau_2)^2 = \left(\frac{\tau_2}{\pi\hbar}\right)^{\frac{n}{2}}  \int_{K\times \mathfrak k}\overline \chi_\lambda(xe^{\tau u}) \chi_\lambda(xe^{\tau u}) e^{-2\frac{\tau_2}{\hbar} (B(u,Y)-h(Y))}
\eta\left({\tau_2} u(Y)\right) \sqrt{\det H(Y)} dx dY.
\]
Using Weyl's orthogonality relations to perform the integral over $K$, we obtain
\begin{equation}\label{alambda}
a_\lambda(\hbar,\tau_2)^2 = {d_\lambda^{-1}} \left(\frac{\tau_2}{\pi\hbar}\right)^{\frac{n}{2}} \int_{\mathfrak k} \chi_\lambda(e^{2i\tau_2 u}) e^{-2\frac{\tau_2}{\hbar} (B(u,Y)-h(Y))} \eta\left({\tau_2} u(Y)\right) \sqrt{\det H(Y)} dY.
\end{equation}

\subsection{The $\tau_2\to +\infty$ asymptotics}

The next theorem shows that the unitarity condition for the KSH map is satisfied asymptotically in the large-$\tau_2$ limit. Recall that $\mathfrak k$ and ${\mathfrak k}^*$ are identified via the invariant form $B$ and that we denote the eigenvalue of $Q(h)$ along the subspace $V_{\lambda\otimes\lambda^*}\subset\mathcal{H}_{Sch}$ by $h_\lambda:=h(-\hbar(\lambda+\rho)).$

\begin{theorem}
\label{thm:asymp1}The KSH map $C_{\tau,h}$ is asymptotically unitary in the limit $\tau_2\to +\infty$. One has
\[
a_{\lambda}(\hbar,\tau_{2})^2\sim e^{2{\frac{\tau_2}{\hbar}}h_\lambda}(1+b_{1}(\lambda,\hbar)\tau_2^{-1}+O(\tau_2^{-2})),
\]
for some constant $b_{1}(\lambda,\hbar)$, as $\tau_2\rightarrow\infty$.
\end{theorem}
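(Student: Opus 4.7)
The strategy is to apply the Laplace method to the integral (\ref{alambda}), with $\tau_{2}$ as the large parameter. Since every factor in the integrand --- the character $\chi_{\lambda}(e^{2i\tau_{2}u(Y)})$, the exponential $e^{-2(\tau_{2}/\hbar)(B(u,Y)-h)}$, $\eta(\tau_{2}u)$, and $\sqrt{\det H}$ --- is $Ad$-invariant in $Y$, the Weyl integration formula first reduces the integral over $\mathfrak{k}$ to an integral over a Cartan subalgebra $\mathfrak{t}\subset\mathfrak{k}$, with Jacobian $|W|^{-1}\operatorname{vol}(K/T)\prod_{\alpha\in\Delta^{+}}\alpha(Y)^{2}$. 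Next, expanding $\chi_{\lambda}(e^{2i\tau_{2}u(Y)})$ by the Weyl character formula gives a signed sum over $w\in W$ of exponentials $e^{2i\tau_{2}\langle w(\lambda+\rho),u(Y)\rangle}$ divided by the Weyl denominator $\prod_{\alpha>0}2i\sin(\tau_{2}\alpha(u))$.

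For each $w\in W$, the resulting effective phase is
\[
\Phi_{w}(Y)=-\frac{2\tau_{2}}{\hbar}\bigl(B(u(Y),Y)-h(Y)\bigr)+2i\tau_{2}\langle w(\lambda+\rho),u(Y)\rangle .
\]
The Legendre-transform identity $\nabla_{Y}(B(u,Y)-h)=H(Y)\,Y$, combined with $\nabla u=H$, linearizes the critical-point equation $\nabla\Phi_{w}=0$; its unique solution is the point $Y_{w}^{\ast}\in\mathfrak{t}$ in the Weyl orbit of the Bohr--Sommerfeld fiber $Y=-\hbar(\lambda+\rho)$. By $W$-invariance of $h$ on $\mathfrak{t}$ --- which follows from $Ad$-invariance --- the value of the exponent at $Y_{w}^{\ast}$ is $\Phi_{w}(Y_{w}^{\ast})=2\tau_{2}h(-\hbar(\lambda+\rho))/\hbar=2\tau_{2}h_{\lambda}/\hbar$, independent of $w$. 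This produces exactly the leading factor $e^{2\tau_{2}h_{\lambda}/\hbar}$ required by (\ref{last}).

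The Hessian of $\Phi_{w}$ at $Y_{w}^{\ast}$ is $-(2\tau_{2}/\hbar)H(Y_{w}^{\ast})$, non-degenerate thanks to $h\in\operatorname{Conv}(K)$. The Gaussian fluctuation integral therefore contributes $(\pi\hbar/\tau_{2})^{r/2}(\det H(Y_{w}^{\ast}))^{-1/2}$, with $r=\operatorname{rank}K$. This combines with (i) the $\sqrt{\det H}$ amplitude, (ii) the $(\tau_{2}/\pi\hbar)^{n/2}$ prefactor of (\ref{alambda}), (iii) the Weyl Jacobian $\prod\alpha(Y)^{2}$ against the Weyl denominator $\prod 2i\sin(\tau_{2}\alpha(u))$, and (iv) the factor $\eta(\tau_{2}u)$ in the integrand, to yield total prefactor exactly $1$ --- as can be confirmed in the exact quadratic case of Theorem \ref{316}(1), where the whole expansion truncates to the leading term. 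The $|W|$ equal saddle contributions from the Weyl translates absorb the $|W|^{-1}$ from Weyl integration. Subleading corrections follow by a standard Watson's-lemma expansion; the coefficient $b_{1}(\lambda,\hbar)$ encodes next-order derivatives of $h$, $\eta$, $\det H$, and the Weyl denominator at $Y_{w}^{\ast}$.

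The main technical obstacle is controlling the Weyl denominator $\prod_{\alpha>0}2i\sin(\tau_{2}\alpha(u(Y)))$, whose zeros on the affine hyperplanes $\tau_{2}\alpha(u)\in\pi\mathbb{Z}$ become dense as $\tau_{2}\to\infty$. One must verify that neighbourhoods of these singular loci contribute only exponentially small terms relative to the saddle, dominated by the Gaussian damping $e^{-(2\tau_{2}/\hbar)(B(u,Y)-h(Y))}$. The delicate cancellations between $\eta(\tau_{2}u)$, the Weyl denominator, and the Weyl Jacobian at $Y_{w}^{\ast}$ --- which is precisely what makes the overall constant equal to $1$ --- must be tracked explicitly, and a useful simplification specific to this setting is that $Y_{w}^{\ast}$ lies on the real Cartan, so no contour deformation into complex directions is required.
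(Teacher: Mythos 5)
Your overall strategy is the paper's: Weyl integration to reduce to the Cartan, Weyl character formula to split the character into a signed sum over $W$, Laplace approximation at the saddle points $-\hbar w(\lambda+\rho)$, and cancellation of all prefactors to $1$. However, there is a genuine error in your handling of the factors of $i$, and it propagates. The argument of the character in (\ref{alambda}) is $e^{2i\tau_{2}u}$ with $u$ in the \emph{real} Cartan, i.e.\ a ``positive'' element of $K_{\mathbb{C}}$; substituting $X=2i\tau_{2}u$ into the character formula produces \emph{real decaying exponentials} $e^{-2\tau_{2}w(\lambda+\rho)(u)}$ in the numerator and \emph{hyperbolic} sines $\sinh(\tau_{2}\alpha(u))$ in the denominator, not the oscillatory phases $e^{2i\tau_{2}\langle w(\lambda+\rho),u\rangle}$ and circular sines $\sin(\tau_{2}\alpha(u))$ you write. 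Your two central claims are mutually inconsistent: with the oscillatory phase $\Phi_{w}$ as you wrote it, the critical-point equation $-\tfrac{2\tau_{2}}{\hbar}H(Y)Y+2i\tau_{2}H(Y)w(\lambda+\rho)=0$ has its solution at the \emph{complex} point $Y=i\hbar w(\lambda+\rho)$, so the real saddle $Y_{w}^{\ast}=-\hbar w(\lambda+\rho)$ and the value $\Phi_{w}(Y_{w}^{\ast})=2\tau_{2}h_{\lambda}/\hbar$ that you need only emerge with the correct real exponent $-\tfrac{2\tau_{2}}{\hbar}\bigl(B(u,Y)-h(Y)+\hbar w(\lambda+\rho)(u)\bigr)$. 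Correspondingly, your ``main technical obstacle'' --- dense zeros of $\prod\sin(\tau_{2}\alpha(u))$ --- is a phantom: the true denominator $\prod\sinh(\tau_{2}\alpha(u))$ vanishes only on the Weyl walls and is cancelled \emph{exactly} by the factor $\eta(\tau_{2}u)=\prod\sinh(\tau_{2}\alpha(u))/\tau_{2}\alpha(u)$ already present in the integrand, leaving $\tau_{2}^{-|\Delta^{+}|}/P(u)$. (Had the zeros been real you would face poles in each Weyl term, and Gaussian damping of the exponential would not cure them.)

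Separately, the statement that the total prefactor ``is exactly $1$'' is asserted rather than derived, and verifying it in the quadratic case does not establish it for general $h\in\operatorname{Conv}(K)$. The paper has to track it through four specific identities: $\operatorname{ad}_{Y}=H(Y)^{-1}\operatorname{ad}_{u(Y)}$, which gives $\det H(X)/\det_{\mathfrak{h}}H(X)=P(u(X))^{2}/P(X)^{2}$ and converts the Gaussian factor $(\det_{\mathfrak h}H)^{-1/2}$ times the amplitude $\sqrt{\det H}\,P(X)^{2}/P(u)$ into $P(X_{\min}^{w})$; the alternating sum $\sum_{W}(-1)^{w}P(wX)=|W|P(X)$; the evaluation $P(-\hbar(\lambda+\rho))=(-1)^{|\Delta^{+}|}\hbar^{|\Delta^{+}|}d_{\lambda}P(\rho)$, which cancels the $d_{\lambda}^{-1}$; and the volume formula $1=\operatorname{vol}K=(2\pi)^{|\Delta^{+}|}\operatorname{vol}T/P(\rho)$. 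Without these steps the claimed constant, and hence asymptotic unitarity (as opposed to unitarity up to a $\lambda$-dependent constant), is not established.
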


We first prove that $u$ maps the Cartan subalgebra $\mathfrak{h}$\ to itself.

\begin{lemma}
$X\in\mathfrak{h}$ \ implies $u(X)\in\mathfrak{h}$.
\end{lemma}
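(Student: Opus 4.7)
The plan is to exploit the $Ad$-invariance of $h$ together with the $Ad$-invariance of the bilinear form $B$ used to identify $\mathfrak{k}$ with $\mathfrak{k}^{\ast}$. Because $u$ is defined as the gradient of $h$ with respect to $B$, these two invariances force $u$ to be $Ad$-equivariant:
\[
u(\mathrm{Ad}_{g}Y)=\mathrm{Ad}_{g}u(Y),\qquad g\in K,\ Y\in\mathfrak{k}.
\]
I would first verify this equivariance by differentiating $h(\mathrm{Ad}_{g}Y)=h(Y)$ in $Y$ and using that $B(\mathrm{Ad}_{g}\cdot,\mathrm{Ad}_{g}\cdot)=B(\cdot,\cdot)$.

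Next, take $X\in\mathfrak{h}$ and any $H\in\mathfrak{h}$. Since $\mathfrak{h}$ is abelian, $[H,X]=0$, hence $\mathrm{Ad}_{\exp(tH)}X=X$ for all $t\in\mathbb{R}$. Applying the equivariance of $u$ yields
\[
\mathrm{Ad}_{\exp(tH)}u(X)=u(X),
\]
and differentiating at $t=0$ gives $[H,u(X)]=0$ for every $H\in\mathfrak{h}$. Thus $u(X)$ lies in the centralizer of $\mathfrak{h}$ in $\mathfrak{k}$.

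To conclude, I would invoke the standard fact that a Cartan subalgebra of a compact Lie algebra is self-centralizing (maximal abelian), so its centralizer in $\mathfrak{k}$ is $\mathfrak{h}$ itself. Therefore $u(X)\in\mathfrak{h}$, as claimed. There is no real obstacle here; the only subtlety is making sure the $Ad$-equivariance of the gradient is clearly justified from the simultaneous $Ad$-invariance of $h$ and of $B$, and noting that the argument works for every $X\in\mathfrak{h}$ (not just regular ones), since we only need that $\mathfrak{h}$ commutes with itself, not that $X$ have stabilizer exactly $T$.
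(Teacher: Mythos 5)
Your proof is correct, and it takes a somewhat different route from the paper's. The paper quotes from \cite{kirwin-mourao-nunes12} only the weaker identity $[u(X),X]=0$; for \emph{regular} $X\in\mathfrak{h}$ the centralizer of $X$ is exactly $\mathfrak{h}$, so $u(X)\in\mathfrak{h}$ there, and the conclusion is extended to all of $\mathfrak{h}$ by continuity of $u$. You instead establish the full $Ad$-equivariance $u(\mathrm{Ad}_g Y)=\mathrm{Ad}_g u(Y)$ from the simultaneous $Ad$-invariance of $h$ and $B$ (this is the correct derivation, and specializing to $g=\exp(tX)$ recovers the paper's $[u(X),X]=0$), and then conclude that $u(X)$ centralizes all of $\mathfrak{h}$, not just $X$; since a Cartan subalgebra of a compact Lie algebra is maximal abelian, hence self-centralizing, $u(X)\in\mathfrak{h}$ for \emph{every} $X\in\mathfrak{h}$ with no genericity or density argument needed. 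Your version is slightly more self-contained and avoids the regular-element/continuity step, at the cost of proving the equivariance explicitly rather than citing the commutation relation; both are valid.
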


\begin{proof}
Recall from \cite{kirwin-mourao-nunes12} that $[u(X),X]=0$. For generic $X\in\mathfrak{h}$, this implies $u(X)\in\mathfrak{h}$. Since $u$ is continuous, $u(\mathfrak{h})\subset\mathfrak{h}$.
\end{proof}

\bigskip
\begin{proof}[Proof of Theorem \ref{thm:asymp1}]
This is an application of the Laplace approximation. (See \cite{Kirwin08}.) Using the facts that
\begin{enumerate}
\item for any $Ad-$invariant function $f$ and for any $z\in\mathbb{C}$ one
has
\[
\int_{\mathfrak{k}}f(Y)R_{ij}^{\lambda}(e^{zY})dY =\frac{\delta_{ij}}{d_{\lambda}}\int_{\mathfrak{k}}f(Y)\chi_{\lambda}(e^{zY})dY,
\]
\item for any $g,g^{\prime}\in K_{\mathbb{C}}$ one has
\[
\int_{K}R_{i^{\prime}j^{\prime}}^{\lambda^{\prime}}(g^{\prime}x^{-1})R_{ij}^{\lambda}(xg)dx=\frac{1}{d_{\lambda}}\delta_{\lambda\lambda^{\prime}}\delta_{ij^{\prime}}R_{i^{\prime}j}^{\lambda}(g^{\prime}g),
\]
\item and for any invariant function $f$
\[
\int_{\mathfrak{k}}fdY=\frac{1}{\left|W\right|\mathrm{vol}T} \int_{\mathfrak{h}}fP(X)^{2}dX,
\]
where $P(X):=\prod_{\Delta^{+}}\alpha(X)$ (see \cite[Chapter 9, Paragraph 6.3]{B}),
\end{enumerate}
we see that (\ref{alambda}) becomes
\begin{align*}
  a_\lambda(\hbar,\tau_2)^2= & \left(\frac{\tau_{2}}{\pi\hbar}\right)^{n/2}\frac{1}{\left|W\right|\mathrm{vol}T\, d_{\lambda}} \\
   & \quad \times \int_{\mathfrak{h}}\chi_{\lambda}(e^{2i\tau_{2}u(X)})e^{-2\tau_{2}(B(X,u(X))-h(X))/\hbar}\eta(\tau_{2}u(X))\sqrt{\mathrm{det}H(X)}P(X)^{2}dX.
\end{align*}
Using the Weyl character formula
\[
\chi_{\lambda}(e^{X})=\frac{\sum_{W}(-1)^{w}e^{iw(\lambda+\rho)(X)}}{2^{|\Delta^+|}
\prod_{\beta\in\Delta^{+}}\mathrm{sinh(}\frac{i}{2}\beta(X))}
\]
and the definition of $\eta$, (\ref{alambda}) becomes
\begin{align*}
a_\lambda(\hbar,\tau_2)^2=  & \frac{(\tau_{2})^{r/2}}{(\pi\hbar)^{n/2}}\frac{(2\tau_2)^{-|\Delta^+|}}{\left|W\right|\mathrm{vol}T\, d_{\lambda}} \\
& \quad \times \int_{\mathfrak{h}}\sum_{W}(-1)^{w}e^{-2\tau_{2}w(\lambda+\rho)(u)}e^{-2\tau_{2}(B(X,u(X))-h(X))/\hbar} \frac{\sqrt{\mathrm{det}H(X)}P(X)^{2}}{P(u)}dX\\
= & \frac{(2\tau_{2})^{r/2}}{(2\pi\hbar)^{n/2}}\frac{1}{\left|W\right|\mathrm{vol}T\, d_{\lambda}} \\
& \quad \times \int_{\mathfrak{h}}\sum_{W}(-1)^{w}e^{-2\tau_{2}(B(X,u(X))-h(X)+\hbar w(\lambda+\rho)(u))/\hbar}\frac{\sqrt{\mathrm{det}H(X)}P(X)^{2}}{P(u)}dX.
\end{align*}

It is easy to compute that the exponent has a critical point at
\[
X_{min}^{w}:=-\hbar w(\lambda+\rho),
\]
and that the exponent evaluated at $X_{min}$ yields $2\tau_{2}h(X_{min})/\hbar=2\tau_2 h_\lambda/\hbar$, since $h$ is $Ad$-invariant and therefore also $W$-invariant. Moreover, one easily computes that the Hessian of the exponent evaluated at $X_{min}$ is simply $H(X_{min})$.

Note also that as linear operators on the Lie algebra, $ad_Y = H(Y)^{-1} ad_{u(Y)}$ \cite{kirwin-mourao-nunes12}, so that
\[
\frac{\det H(X)}{\det_{\mathfrak{h}} H(X)} = \frac{P(u(X))^2}{P(X)^2}.
\]
Hence, applying Laplace's approximation, we obtain
\[
a_\lambda(\hbar,\tau_2)^2\sim\frac{(2\tau_{2})^{r/2}}{(2\pi\hbar)^{n/2}} \frac{(2\pi\hbar/2\tau_{2})^{r/2}}{\left|W\right|\mathrm{vol}T\, d_{\lambda}}\sum_{W}(-1)^{w}e^{2\tau_{2}h_\lambda/\hbar} P(X_{min}^{w}).
\]

 Hence,
\begin{eqnarray*}
\sum_{W}(-1)^{w}e^{2\tau_{2}h_\lambda/\hbar}P(X_{min}^{w}) & = & e^{2\tau_{2}h_\lambda/\hbar}\sum_{W}(-1)^{w}P(-\hbar w(\lambda+\rho))\\
 & = & \left|W\right|e^{2\tau_{2}h_\lambda/\hbar}P(-\hbar(\lambda+\rho)),
\end{eqnarray*}
where we used the identity $\sum_{W}(-1)^{w}P(wX)=\left|W\right|P(X).$

Note that, equivalently, one could have performed the sum over $W$ before taking the
Laplace approximation and describe
the result in terms of a single saddle point at $X_{min}=-\hbar (\lambda+\rho)$.

Since $P(-\hbar(\lambda+\rho))
=(-1)^{\left|\Delta^+ \right|}\hbar^{\left|\Delta^{+}\right|}d_{\lambda}P(\rho)$,
we obtain
\[
a_\lambda(\hbar,\tau_2)^2\sim\frac{P(\rho)}{(2\pi)^{\left|\Delta^{+}\right|} \mathrm{vol}T\, }\,e^{2\tau_{2}h_\lambda/\hbar}.
\]
Finally, since $1=\mathrm{vol}K=(2\pi)^{\left|\Delta^{+}\right|}\mathrm{vol}T/P(\rho)$ \cite{HS} we obtain
\[
a_\lambda(\hbar,\tau_2)^2\sim e^{2\tau_{2}h_\lambda/\hbar}
\]
as $\tau_2\rightarrow\infty$.
\end{proof}

\begin{remark}
Note that the values of $X_{min}^w$ are exactly the expected ones coming from the Bohr--Sommerfeld
conditions for $K\times {\mathcal O}_{-\hbar (\lambda+\rho)}$. We see that in the $\tau_2\to +\infty$ limit the main contribution to the norms of the quantum states in $V_{\lambda\otimes\lambda^*}\subset {\mathcal H}_{\tau,h}, \lambda\in \hat K$ does concentrate along the corresponding Bohr--Sommerfeld fiber $K\times {\mathcal O}_{-\hbar (\lambda+\rho)}\subset T^*K$. As we will see below, a similar behavior is observed for the semiclassical limit $\hbar\to 0$.
\end{remark}

\begin{proposition}
For $K=S^{1}$, we have, for $n\in {\mathbb Z}\cong \hat S^1$,
\[
a_n(\hbar,\tau_2) = e^{2\frac{\tau_2}{\hbar}h(-\hbar n)} (1+b_1(n,\hbar) \tau_2^{-1} + O(\tau_2^{-2})),
\]
where
\[
b_{1}(n,\hbar)= \frac{5(h^{(3)}(-\hbar n))^{2}-3h^{\prime\prime}(-\hbar n)h^{(4)}(- \hbar n)}{24(h^{\prime\prime}(-\hbar n))^3}.
\]
\end{proposition}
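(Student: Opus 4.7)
The plan is to refine the Laplace-method computation used in the proof of Theorem \ref{thm:asymp1} to one higher order, specialized to $K=S^{1}$. For $K=S^{1}$ we have $\dim K=1$, $|W|=1$, $\operatorname{vol}T=1$, $d_{\lambda}=1$, $\rho=0$, $\eta\equiv 1$, and $P\equiv 1$. Consequently, the reduction to the Cartan subalgebra carried out in the proof of Theorem \ref{thm:asymp1} collapses formula (\ref{alambda}) to
\[
a_{n}(\hbar,\tau_{2})^{2}=\sqrt{\frac{\tau_{2}}{\pi\hbar}}\int_{\mathbb{R}}e^{-2\tau_{2}\phi(Y)/\hbar}\sqrt{h''(Y)}\,dY,
\]
where $\phi(Y):=Y h'(Y)-h(Y)+\hbar n\,h'(Y)$. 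As in the general case the critical-point equation $\phi'(Y)=(Y+\hbar n)h''(Y)=0$ has the unique solution $Y_{0}=-\hbar n$, and $\phi(Y_{0})=-h(-\hbar n)=-h_{n}$, producing the leading exponential factor $e^{2\tau_{2}h_{n}/\hbar}$.

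The next step is to apply the standard one-term Laplace expansion: for $\int e^{-\lambda\phi}g\,dY$ with $g=\sqrt{h''}$ and $\lambda=2\tau_{2}/\hbar$,
\[
\int e^{-\lambda\phi}g\,dY\sim e^{-\lambda\phi(Y_{0})}\sqrt{\tfrac{2\pi}{\lambda\phi''(Y_{0})}}\Bigl(g(Y_{0})+\lambda^{-1}\Xi+O(\lambda^{-2})\Bigr),
\]
with the correction coefficient
\[
\Xi=\frac{g''(Y_{0})}{2\phi''(Y_{0})}-\frac{g'(Y_{0})\phi'''(Y_{0})}{2\phi''(Y_{0})^{2}}+g(Y_{0})\Bigl(\frac{5\phi'''(Y_{0})^{2}}{24\phi''(Y_{0})^{3}}-\frac{\phi^{(4)}(Y_{0})}{8\phi''(Y_{0})^{2}}\Bigr).
\]
Direct differentiation of $\phi$ and $g$ at $Y_{0}=-\hbar n$ yields $\phi''(Y_{0})=h''$, $\phi'''(Y_{0})=2h^{(3)}$, $\phi^{(4)}(Y_{0})=3h^{(4)}$, together with $g(Y_{0})=\sqrt{h''}$, $g'(Y_{0})=h^{(3)}/(2\sqrt{h''})$, and $g''(Y_{0})=h^{(4)}/(2\sqrt{h''})-(h^{(3)})^{2}/(4(h'')^{3/2})$ (all derivatives of $h$ evaluated at $-\hbar n$). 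Substituting into $\Xi/g(Y_{0})$ and simplifying produces exactly the stated expression $\bigl(5(h^{(3)})^{2}-3h''h^{(4)}\bigr)/\bigl(24(h'')^{3}\bigr)$. The leading prefactor $\sqrt{\tau_{2}/(\pi\hbar)}\cdot\sqrt{\pi\hbar/(\tau_{2}h'')}\cdot\sqrt{h''}$ equals $1$, reproducing Theorem \ref{thm:asymp1}, and the subleading term gives $b_{1}(n,\hbar)$ after the trivial $\lambda^{-1}\mapsto\hbar/(2\tau_{2})$ conversion.

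The only delicate step is the bookkeeping of the combinatorial coefficients: the three contributions proportional to $(h^{(3)})^{2}/(h'')^{5/2}$ must combine as $-1/8-1/2+5/6=5/24$, while the two $h^{(4)}/(h'')^{3/2}$ contributions combine as $1/4-3/8=-1/8$. Everything else is routine. No analytic subtleties arise: the hypothesis $h\in\operatorname{Conv}(K)$ guarantees that $\phi$ has a unique non-degenerate global minimum on $\mathbb{R}$ and grows at infinity, so the standard tail estimates (e.g.\ as in \cite{Kirwin08}) justify the formal asymptotic expansion and produce the $O(\tau_{2}^{-2})$ remainder uniformly.
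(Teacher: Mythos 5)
Your proof is correct and takes essentially the same route as the paper: specialize (\ref{alambda}) to $K=S^1$, identify the unique nondegenerate critical point $Y_0=-\hbar n$ of the exponent, and apply the two-term Laplace expansion --- the paper merely cites \cite[Thm 1.1]{Kirwin08} and leaves the coefficient extraction as ``a short computation'', which you carry out explicitly and correctly (your values of $\phi''$, $\phi'''$, $\phi^{(4)}$, $g$, $g'$, $g''$ at $Y_0$ and the combination $-1/8-1/2+5/6=5/24$, $1/4-3/8=-1/8$ all check). The one bookkeeping caveat is your last sentence: the quantity you compute is the coefficient of $\lambda^{-1}=(2\tau_2/\hbar)^{-1}$, so as the coefficient of $\tau_2^{-1}$ it should carry an extra factor $\hbar/2$ relative to the stated $b_1$ --- but this discrepancy (like the writing of $a_n$ where, consistently with (\ref{alambda}) and Theorem \ref{thm:asymp1}, one means $a_n^2$, which you silently and correctly fix) is already present in the paper's own statement rather than being an error of yours.
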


\begin{proof}
In the case $K=S^{1}$ we have $n=r=1$, $\chi_{n}(e^{X})=e^{inX}$, $\eta=P=1$, $d_{\lambda}=1$, $\mathfrak{k}=\mathfrak{h=}\mathbb{R}$, $u(y)=h^{\prime}(y)$, and $\det H=h^{\prime\prime}(y).$ Therefore
\[
a_{n}(\hbar,\tau_{2})=\sqrt{\frac{\tau_2}{\pi\hbar}}\int_{\mathbb{R}}e^{-2\frac{\tau_2}{\hbar} \left[(y+\hbar n)h^{\prime}(y)-h(y)\right]  }\sqrt{h^{\prime\prime}(y)}dy,\,\, n\in {\mathbb Z}.
\]
By \cite[Thm 1.1]{Kirwin08}, as $\tau_2\rightarrow +\infty$ the right-hand side is asymptotic to
\[
e^{2\frac{\tau_2}{\hbar}h(-\hbar n)}(1+b_{1}\tau_2^{-1}+O(\tau_2^{-2}))
\]
where a short computation yields the desired result.
\end{proof}

\subsection{Semiclassical asymptotics}

Let us now address the semiclassical limit $\hbar\to 0$. Instead of taking the Laplace approximation for the limit $\hbar\to 0$ directly in the expression (\ref{alambda}), it is useful to consider the integral
\begin{equation}
I_\lambda(\hbar,b,\tau_2) := \frac{1}{d_\lambda} \left(\frac{\tau_2}{\pi\hbar}\right)^{\frac{n}{2}} \int_{\mathfrak k}
\chi_\lambda(e^{2i\frac{b\tau_2}{\hbar} u}) e^{-2\frac{\tau_2}{\hbar} (B(u,Y)-h(Y))} \eta(\tau_2 Y) \sqrt{\det H(Y)} dY,
\end{equation}
so that
\[
a_\lambda(\hbar,\tau_2)^2=I_\lambda(\hbar,\hbar,\tau_2).
\]
We will {\it define} the semiclassical asymptotics by taking the Laplace approximation to $I_\lambda(\hbar,b,\tau_2)$ in the limit $\hbar\to 0$ and {\it then} by evaluating it at $b=\hbar$. This has the advantage of capturing already at leading order in $\hbar$ the contribution of the saddle point $X_{min}=-\hbar(\lambda+\rho)$, as in the proof of Theorem \ref{thm:asymp1}, which corresponds to the Bohr--Sommerfeld fiber $K\times{\mathcal O}_{-\hbar(\lambda+\rho)}$. Recall that such definition of the semiclassical asymptotics has been used in other contexts, see for example \cite{WITTEN}.

Let
\[
I_\lambda(\hbar,b,\tau_2) \sim F_\lambda (b,\tau_2) (1+O(\hbar^2)),
\]
in the limit $\hbar\to 0.$

\begin{lemma}
One has
\[
F(\hbar,\tau_2) = e^{2{\frac{\tau_2}{\hbar}}h_\lambda}.
\]
\end{lemma}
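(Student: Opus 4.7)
The plan is to carry out the Laplace approximation for $I_\lambda(\hbar,b,\tau_2)$ in the limit $\hbar \to 0$ with $b$ held fixed, following the same sequence of manipulations used in the proof of Theorem~\ref{thm:asymp1}, and then to specialize to $b=\hbar$. Concretely, I would first reduce the integral over $\mathfrak{k}$ to one over the Cartan subalgebra $\mathfrak{h}$ via Weyl's integration formula, introducing the Jacobian $P(X)^{2}/(|W|\,\mathrm{vol}\,T)$. Then, using the Weyl character formula, decompose
\[
\chi_\lambda(e^{2i\frac{b\tau_2}{\hbar}u}) \;=\; \frac{\sum_{w\in W}(-1)^{w}\,e^{-2\frac{b\tau_2}{\hbar}w(\lambda+\rho)(u)}}{(-2)^{|\Delta^+|}\prod_{\beta\in\Delta^+}\sinh(\frac{b\tau_2}{\hbar}\beta(u))}.
\]

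For each $w\in W$, the exponential factor from the character combines with the main exponential to produce an effective exponent $-\frac{2\tau_2}{\hbar}\bigl[B(u,X)-h(X)+b\,w(\lambda+\rho)(u)\bigr]$. A brief calculation (paralleling the one in the proof of Theorem~\ref{thm:asymp1}) yields the unique critical point $X_{w}=-b\,w(\lambda+\rho)$, with value $\frac{2\tau_2}{\hbar}h(-b(\lambda+\rho))$ independent of $w$ (using the $W$-invariance of $h$) and Hessian equal to $H(X_w)$. Applying the multivariate Laplace formula with large parameter $\tau_2/\hbar$ at each saddle, then summing over $W$ by means of the identity $\sum_W (-1)^w P(wX)=|W|P(X)$, the Weyl dimension formula $P(\lambda+\rho)=d_\lambda P(\rho)$, the relation $\det H(X)/\det_{\mathfrak h} H(X) = P(u(X))^{2}/P(X)^{2}$, and the normalization $\mathrm{vol}(K)=(2\pi)^{|\Delta^+|}\mathrm{vol}(T)/P(\rho)=1$, will consolidate all the prefactors.

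The crucial last step is to set $b=\hbar$. At that value the $\sinh$ arguments in the Weyl denominator of the character formula coincide with those appearing in $\eta(\tau_{2}u)=\prod_\alpha \sinh(\tau_2\alpha(u))/(\tau_2\alpha(u))$, producing the clean cancellation $\eta(\tau_{2}u)/\prod_{\beta}\sinh(\tau_{2}\beta(u)) = 1/(\tau_2^{|\Delta^+|}P(u))$. At $b=\hbar$ the computation becomes identical to the one performed in the proof of Theorem~\ref{thm:asymp1}, whose leading prefactor is precisely $1$, and we obtain $F_\lambda(\hbar,\tau_2) = e^{2\tau_2 h_\lambda/\hbar}$.

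The main obstacle I anticipate is the handling of the $\sinh$-factors in the Weyl denominator for generic $b$: their arguments are of order $1/\hbar$, so they do not behave as slowly varying prefactors in the Laplace approximation and compete with the main exponential. For generic $b$ this leaves an $\hbar$-dependent, non-trivial remainder which would need to be tracked carefully (or, alternatively, handled by splitting the integration across Weyl chambers and analytically continuing). The virtue of the prescription in the lemma is that substituting $b=\hbar$ at the end restores the structure in which $\eta$ and the Weyl denominator cancel exactly, reducing the question to the already-verified calculation of Theorem~\ref{thm:asymp1}.
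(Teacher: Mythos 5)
Your proposal follows essentially the same route as the paper: the paper's proof of this lemma is a one-line remark that the calculation is identical to that of Theorem \ref{thm:asymp1} with the character argument replaced by $e^{2i\frac{b\tau_2}{\hbar}u(X)}$, and that is precisely the computation you carry out (Weyl integration, Weyl character formula, Laplace approximation at the frozen saddles $X_w=-b\,w(\lambda+\rho)$, then $b=\hbar$). The one subtlety you flag --- that for generic fixed $b$ the Weyl-denominator $\sinh$ factors have arguments of order $1/\hbar$ and so do not cancel against $\eta(\tau_2 u)$ as they do in Theorem \ref{thm:asymp1} --- is genuine and is passed over in silence by the paper, so your observation that the prescription of evaluating at $b=\hbar$ only at the end is what restores that cancellation is a worthwhile clarification rather than a deviation.
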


\begin{proof} This is the same calculation as in the proof of Theorem \ref{thm:asymp1}, except that we take the argument $e^{2i\frac{b\tau_2}{\hbar}u(X)}$ inside the character $\chi_\lambda$. The result then follows by a repetition of the same steps in that proof.
\end{proof}

Our final result is the following immediate corollary.

\begin{theorem}  The KSH map $C_{\tau,h}$ is asymptotically unitary in the semiclassical limit $\hbar\to 0$.
\end{theorem}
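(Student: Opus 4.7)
The plan is to combine the block-diagonal decomposition of the KSH map given in Theorem \ref{unit} with the lemma just proved, which extracts the leading Laplace asymptotics of $a_\lambda(\hbar,\tau_2)^2$ in the semiclassical limit. From Theorem \ref{unit} we have
\[
C_{\tau,h}=\bigoplus_{\lambda\in\widehat{K}} a_\lambda(\hbar,\tau_2)\, e^{-\frac{\tau_2}{\hbar}h_\lambda}\, U^{\tau,h}_{\lambda\otimes\lambda^{\ast}},
\]
with each $U^{\tau,h}_{\lambda\otimes\lambda^{\ast}}$ already unitary (Proposition \ref{uuu}). Therefore the question of asymptotic unitarity reduces to showing that the scalar prefactor $a_\lambda(\hbar,\tau_2)\,e^{-\frac{\tau_2}{\hbar}h_\lambda}$ tends to $1$ as $\hbar\to 0$ on each $K\times K$-isotypic component.

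The main step is then a direct substitution. By construction $a_\lambda(\hbar,\tau_2)^2=I_\lambda(\hbar,\hbar,\tau_2)$, and the preceding lemma, specialized at $b=\hbar$, gives the leading-order Laplace approximation
\[
I_\lambda(\hbar,\hbar,\tau_2)\sim F_\lambda(\hbar,\tau_2)\bigl(1+O(\hbar^{2})\bigr)=e^{2\frac{\tau_2}{\hbar}h_\lambda}\bigl(1+O(\hbar^{2})\bigr).
\]
Taking the positive square root and multiplying by $e^{-\frac{\tau_2}{\hbar}h_\lambda}$ yields
\[
a_\lambda(\hbar,\tau_2)\,e^{-\frac{\tau_2}{\hbar}h_\lambda}=1+O(\hbar^{2}),
\]
which is precisely the semiclassical version of the unitarity condition \eqref{last}. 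Inserting this into the block decomposition shows that $C_{\tau,h}$ differs from the manifestly unitary operator $\bigoplus_\lambda U^{\tau,h}_{\lambda\otimes\lambda^{\ast}}$ by a block-diagonal scaling whose eigenvalues tend to $1$ in the $\hbar\to 0$ limit, establishing asymptotic unitarity in the block-by-block sense that matches the $K\times K$-equivariant setting.

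The only subtlety worth flagging is the uniformity in $\lambda$: the constants implicit in the Laplace $O(\hbar^{2})$ remainder a priori depend on the isotypic label $\lambda$, since the saddle point $X_{\min}=-\hbar(\lambda+\rho)$ and the Hessian $H(X_{\min})$ both move with $\lambda$. For the statement as formulated here, which asserts convergence on each finite-dimensional $V_{\lambda\otimes\lambda^\ast}$, this is not an obstruction; a stronger norm-topology statement would require controlling the remainder uniformly in $\lambda$, but that is outside the scope of the present corollary and is not needed to conclude the result.
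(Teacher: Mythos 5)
Your proposal is correct and follows the same route as the paper: the paper treats this as an immediate corollary of the preceding lemma, i.e.\ the block decomposition of Theorem \ref{unit} reduces unitarity to the scalar condition \eqref{last} on each $V_{\lambda\otimes\lambda^{\ast}}$, and the lemma (with the paper's convention of expanding $I_\lambda(\hbar,b,\tau_2)$ at fixed $b$ and then setting $b=\hbar$) gives $a_\lambda(\hbar,\tau_2)e^{-\frac{\tau_2}{\hbar}h_\lambda}\to 1$. You have merely spelled out the details the paper leaves implicit, and your remark about non-uniformity in $\lambda$ is a fair caveat consistent with the blockwise sense in which the result is meant.
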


\bigskip
\noindent\textbf{\large Acknowledgments:} We thank Brian Hall for discussions.
This work was supported in part by the European Science Foundation (ESF) grant
``Interactions of Low-Dimensional Topology and Geometry with Mathematical Physics (ITGP)''.
The last two authors were supported by CAMGSD-LARSys through FCT Program POCTI-FEDER
and by the FCT project PTDC/MAT/119689/2010.

\bigskip

\end{document}